

\newcommand{\RR}{{R}}

\documentclass[makeidx, hyperindex, oneside, notitlepage,12pt]{article}

\usepackage[colorlinks=true,urlcolor=blue, citecolor=red, 
linkcolor=blue, bookmarks=true]{hyperref}

\usepackage{xcolor}

\pagestyle{plain}

\usepackage{makeidx}

\makeindex

\newcommand{\dfn}[2]{{\it #1}{\index{#2}}} 

\usepackage{amssymb}
\usepackage[leqno]{amsmath}
\usepackage{amsfonts}
\usepackage{amsopn}
\usepackage{amstext}
\usepackage{amsthm}
\usepackage{amsthm}
\usepackage{bm}
\usepackage{mathrsfs}

\usepackage{tikz}
\usetikzlibrary{cd}

\usepackage{verbatim}

\usepackage{makeidx} 
\makeindex

\usepackage{color}

\usepackage{calrsfs}
\usepackage{fourier-orns}


\catcode `@ = 11
\renewcommand\section{\@startsection{section}{1}{\z@}%
                                   {-3.5ex \@plus -1ex \@minus -.2ex}%
                                   {2.3ex \@plus.2ex}%
                                   {\normalfont\large\bfseries}}
\renewcommand\subsection{\@startsection{subsection}{2}{\z@}%
                                     {-3.25ex\@plus -1ex \@minus -.2ex}%
                                     {1.5ex \@plus .2ex}%
                                    {\normalfont\bfseries}} 
%
\renewcommand\subsubsection{\@startsection{subsubsection}{2}{\z@}%
                                     {-3.25ex\@plus -1ex \@minus -.2ex}%
                                     {1.5ex \@plus .2ex}%
                                    {\normalfont\itshape}} 
\catcode `@ = 12

    \textheight=22cm
    \textwidth=15cm
    \hoffset=-1cm
    \voffset=-2cm

\frenchspacing

\providecommand{\mcal}{\mathcal}
\renewcommand{\Bbb}{\mathbb}

\newenvironment{pf}{\begin{proof}}{\end{proof}}






\newcommand{\Pee}{{\mcal{P}}}

\newcommand{\Yu}{{\mcal{U}}}
\newcommand{\Vee}{{\mcal{V}}}

\newcommand{\calY}{{\mcal{Y}}}

\newcommand{\Be}{{\Bbb{B}}}

\newcommand{\Qyu}{{\Bbb{Q}}}

\newcommand{\natA}{{\mathbb A}}
\newcommand{\natB}{{\mathbb B}}

\newcommand{\natM}{{\mathbb M}}
\newcommand{\natN}{{\mathbb N}}

\newcommand{\natQ}{{\mathbb Q}}
\newcommand{\natR}{{\mathbb R}}


\def\calU{\mathcal{U}}
\def\calV{\mathcal{V}}
\def\calW{\mathcal{W}}


\newcommand{\lam}{{\lambda}}
\newcommand{\al}{\alpha}

\newcommand{\sig}{\sigma}
\newcommand{\eps}{\varepsilon}
\renewcommand{\phi}{\varphi}
\renewcommand{\rho}{\varrho}

\newcommand{\rest}{\restriction}

\newcommand{\ntr}{{n\in\omega}}

\newcommand{\loe}{\leqslant}
\newcommand{\goe}{\geqslant}

\newcommand{\subs}{\subseteq}
\newcommand{\sups}{\supseteq}
\newcommand{\nnempty}{\ne\emptyset}


\newcommand{\cl}{\operatorname{cl}}

\newcommand{\w}{\operatorname{\mathbf{w}}}


\newcommand{\cf}{\operatorname{cf}}

\newcommand{\supp}{\operatorname{supp}}


\newcommand{\oraz}{\qquad\text{and}\qquad}


\newcommand{\meet}{\wedge}

\newcommand{\join}{\vee}


\newcommand{\by}{/}


\newtheorem{tw}{Theorem}[section]
\newtheorem{wn}[tw]{Corollary}
\newtheorem*{corollary*}{Corollary}
\newtheorem{lm}[tw]{Lemma}
\newtheorem{prop}[tw]{Proposition}
\newtheorem*{proposition*}{Proposition}
\newtheorem*{theorem*}{Theorem}
\newtheorem{claim}[tw]{Claim} 

\newtheorem{corollary}[tw]{\bf Corollary}
\newtheorem{proposition}[tw]{\bf Proposition}
\newtheorem{lemma}[tw]{\noindent {\bf Lemma}}

\theoremstyle{definition}
\newtheorem{df}[tw]{Definition}
\newtheorem*{comment*}{Comment}
\newtheorem*{remark*}{Remark}
\newtheorem*{Observation*}{\bf Observation}

\newtheorem{pyt}[tw]{Question}

\newtheorem{uwgi}[tw]{Remark}


\newcommand{\setof}[2]{\{#1\colon #2\}}

\newcommand{\sett}[2]{\{#1\}_{#2}}
\newcommand{\sn}[1]{\{#1\}} 
\newcommand{\dn}[2]{\{#1,#2\}} 
\newcommand{\pair}[2]{\langle #1, #2 \rangle} 
\newcommand{\triple}[3]{\langle #1, #2, #3 \rangle} 
\newcommand{\fourthple}[4]{\langle #1, #2, #3, #4 \rangle} 
\newcommand{\map}[3]{#1\colon #2 \to #3} 
\newcommand{\img}[2]{#1[#2]} 
\newcommand{\inv}[2]{{#1}^{-1}[#2]} 


\newcommand{\fin}[1]{[#1]^{<\omega}}




\newcommand{\norm}[1]{\|#1\|}

\newcommand{\abs}[1]{|#1|}

\newcommand{\cmp}{\circ} 


\newcommand{\cont}{\ensuremath{\mathfrak c}}



\newcommand{\separator}{\begin{center} \leafright \leafright \leafright \decotwo \decotwo \decotwo \leafleft \leafleft\leafleft
\end{center}}

\def\powerset{{\mathscr{P}}}  

\newcommand{\rfs}[1]{{\rm #1\kern 0.7pt}}

\newcommand{\trpl}[3]{\langle #1, #2, #3 \rangle}

\newcommand{\fnn}[3]{#1:#2 \rightarrow #3}
\newcommand{\cat}{\kern 1.8pt{\mathaccent 94 {\,\ }}\kern 0.7pt}
\newcommand{\onetpl}[1]{\langle \kern1pt #1 \kern1pt \rangle}

\newcommand{\seqnn}[2]{\langle#1\rangle_{#2} }

\newcommand{\fourtpl}[4]{\langle\kern1pt#1, #2, #3, #4\kern1pt\rangle}
\newcommand{\qdrpl}[4]{\langle\kern1pt#1, #2, #3, #4\kern1pt\rangle}
\newcommand{\fivtpl}[5]{\langle\kern1pt#1, #2, #3, #4, #5\kern1pt\rangle}
\newcommand{\sixpl}[6]{\langle\kern1pt#1, #2, #3, #4, #5,
#6\kern1pt\rangle}

\def\join{\vee}

\def\meet{\wedge}

\newcommand{\symdiff}{\mathbin{\mbox{\scriptsize$\, \triangle \, $}}}


\newcommand{\upclcone}[1]{[\,{#1},\rightarrow)}




\def\Clop{{\rm Clop}}
\def\clop{{\rm Clop}}

\def\Ult{{\rm Ult}}
\def\ult{{\rm Ult}}

\newcommand{\Fr}{\operatorname{Fr}}



\def\eqdef{\mbox{\bf\ :=\ }}  


\newcommand{\BEGINPROOF}[1]%
{\noindent{\it Proof of #1. }}




\newcommand{\band}[1]{\ensuremath{\mathfrak B_{#1}}}

\newcommand{\ord}{\operatorname{ord}}

\usepackage{makeidx}

\newcommand{\rloe}{\preceq}

\newcommand{\alex}{\operatorname{D}}
\newcommand{\tight}{\operatorname{\mathfrak{t}}}

\newcommand{\koment}[1]{}


\title{\bf Ultrafilter selection and Corson compacta}

\author{Robert Bonnet%
\footnote{
Laboratoire de Math\'ematiques,
Universit\'e de Savoie et Mont Blanc, Le Bourget-du-Lac, France.
The first author was supported by the Institute of Mathematics of the Czech Academy of Sciences, Prague.}
, Wies{\l}aw Kubi\'s%
\footnote{
Institute of Mathematics,
Czech Academy of Sciences,
Praha, Czech Republic. The second author was supported by the GA \v CR grant 20-22230L (Czech Science Foundation).}
, and Stevo Todor\v cevi\'c%
\footnote{
Institut de Math\'ematiques de Jussieu, Paris, France, and Department of Mathematics, University of Toronto, Canada.
}
}
%
{\date{ 
\today}}

\makeindex

\begin{document}

\thispagestyle{empty}

\maketitle

\begin{abstract}
We study the question which Boolean algebras have the property that for every generating set there is an ultrafilter selecting maximal number of its elements.
We call it the \emph{ultrafilter selection property}.
For cardinality $\aleph_1$ the property is equivalent to the fact that the space of ultrafilters is not Corson compact. 
\koment{We generalize Corson compact spaces for uncountable support and we show that almost properties of this class are preserved.}
We also consider the pointwise topology on a Boolean algebra, proving a result on the Lindel\"of number in the context of the ultrafilter selection property.
Finally, we discuss poset Boolean algebras, interval algebras, and semilattices in the context of ultrafilter selection properties.
\end{abstract}

\koment{\vfill
	\noindent
	\begin{footnotesize}%
		Note that results \ref{fact-uiuio}, \ref{cor-ieyzuirzeuirez} and \ref{cor-cor-456456456} are news and almost obvious. 
		See also the questions after \ref{prop-sigma-2}.
		\\ 
		The corrections are only suggestions for \bk{comments} or \BK{suggestion}: 
	\end{footnotesize}
}

\vfill

\noindent%
\begin{footnotesize}%
{\bf Mathematics Subject  Classification (2010)}
{\bf Primary:}
06E15, 
54D30. 
{\bf Secondary:}
03G10, 
54A25,  
\newline
{\bf Keywords:}
Corson compact spacs, Valdivia compact spaces, 
Boolean algebras, ultrafilter selection, elementary submodel. 
\end{footnotesize}

\newpage


\phantom{bk}

\vspace{-5.5em}
\tableofcontents 
\bigskip 

{\bf References (and Index) \hfill \pageref{references}}

\newpage

\section{Introduction}
\label{introduction}

Given an infinite Boolean algebra $\Be$, we may ask whether for every generating set $G$ there is an ultrafilter $p$ satisfying $|p \cap G| = |\Be|$.
In this case we call it the \dfn{ultrafilter selection property}{ultrafilter selection property}. 
There are many Boolean algebras which drastically fail this property: For example, let $\Be$ be the subalgebra of $\Pee(\kappa)$ generated by finite sets (the finite-cofinite algebra).
Taking $G$ consisting of all singletons, we see that $|p \cap G| \loe 1$ for every ultrafilter $p$ on $\Be$.

In Theorem~\ref{ThmTrojkaa} below we prove that every poset algebra (in particular, every interval algebra) of uncountable regular cardinality has the ultrafilter selection property.
Recall that, given a chain (i.e., a linearly ordered set) $C$, the \dfn{interval algebra}{interval algebra} $\Be(C)$ over $C$ is the subalgebra of $\powerset(C)$ generated by all intervals of the form 
$$[a,\rightarrow) \eqdef \setof{ x \in C}{ x \geq a},$$ where $a \in C$.
For the definition of a poset Boolean algebra, see Section~\ref{SectModestlats}.

In this note we are rather interested in the negation of the ultrafilter selection property. Namely, a Boolean algebra $\Be$ is called \dfn{$\kappa$-Corson}{$\kappa$-Corson} if there exists a generating set $G \subs \Be$ such that $|G \cap p| < \kappa$ for every $p \in \ult (\Be)$.
We shall see later that $\Be$ is $\kappa$-Corson if and only if its space of ultrafilters is $\kappa$-Corson (see Definition~\ref{DFkapaCorsn} below).
In fact, $\Be$ is $\aleph_1$-Corson iff the compact space $\ult (\Be)$ is a Corson compact.
The notion of a $\kappa$-Corson compact was introduced by Kalenda~\cite{Ka1} and later studied by Bell and Marciszewski~\cite{BeMa} for infinite successor cardinals. 
The definitions above make sense also for $\kappa = \aleph_0$, however in that case the situation is somewhat clear:

\begin{prop}[\cite{Alster}]
\label{prop-1212}
An Boolean algebra is $\aleph_0$-Corson if and only if its Stone space is a strong Eberlein compact space.
\end{prop}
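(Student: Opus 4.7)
The plan is to unpack this via Stone duality: both $\aleph_0$-Corson-ness and strong Eberlein-ness are stated in terms of coordinates/supports in a $\{0,1\}^\kappa$-embedding, and a generating set of a Boolean algebra corresponds bijectively to a collection of coordinate functions that separates points of its Stone space.

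For the forward direction, I would start from a generating set $G \subs \Be$ witnessing $\aleph_0$-Corson, so that $|p\cap G|<\aleph_0$ for every $p\in\ult(\Be)$. Define the evaluation map $e\colon \ult(\Be)\to\{0,1\}^G$ by $e(p)(g)=1$ iff $g\in p$. This map is continuous, and because $G$ generates $\Be$ the clopen sets $\h g=\setof{p}{g\in p}$ separate points of $\ult(\Be)$, so $e$ is injective; continuity and compactness then make it a topological embedding. The hypothesis on $G$ says exactly that $\supp(e(p))=\setof{g\in G}{g\in p}$ is finite for every $p$, so $e(\ult(\Be))$ lies in the $\sigma$-product of $\{0,1\}^G$. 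Hence $\ult(\Be)$ is strong Eberlein compact.

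For the backward direction, I would start with a topological embedding $j\colon \ult(\Be)\hookrightarrow\{0,1\}^\kappa$ whose image consists of points with finite support. For each $\alpha<\kappa$ set $U_\alpha=\setof{p\in\ult(\Be)}{j(p)(\alpha)=1}$; this is a clopen subset of $\ult(\Be)$, hence corresponds via Stone duality to an element $b_\alpha\in\Be$. Put $G=\setof{b_\alpha}{\alpha<\kappa}$. Since $j$ is an embedding, the coordinate projections separate points of $j(\ult(\Be))$, so the clopens $U_\alpha$ separate points of $\ult(\Be)$, which forces $G$ to generate $\Be$ (the subalgebra generated by $G$ corresponds to a continuous surjection onto a Hausdorff quotient that still separates points, hence equals $\Be$). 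Finally, for $p\in\ult(\Be)$, $p\cap G=\setof{b_\alpha}{j(p)(\alpha)=1}$ is in bijection with $\supp(j(p))$, which is finite by assumption; hence $\Be$ is $\aleph_0$-Corson.

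There is no substantial obstacle: the whole statement is a routine Stone-duality translation between the lattice-theoretic formulation (selecting coordinates of an ultrafilter from a generating set) and the topological one (finite supports in a cube embedding). The only subtlety worth mentioning is the argument that separating clopens forces generation of the whole algebra, which is standard once one invokes the equivalence between subalgebras of $\Be$ and continuous images of $\ult(\Be)$.
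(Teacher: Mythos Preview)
The paper does not actually supply a proof of this proposition; it is stated with a citation to Alster and left without a proof environment. Your argument is correct and is precisely the Stone-duality translation one expects: it is the $\kappa=\aleph_0$, zero-dimensional instance of the argument the paper later writes out for Proposition~\ref{Propeholnjewr}. One small imprecision: in the backward direction you claim that $p\cap G$ is in \emph{bijection} with $\supp(j(p))$, but different coordinates $\alpha$ may yield the same clopen $U_\alpha$ and hence the same $b_\alpha$; what you actually have is a surjection from $\supp(j(p))$ onto $p\cap G$, which is all you need for finiteness. Also note that you are implicitly using the characterization ``embeds into $\{0,1\}^\kappa$ with finite supports'' for strong Eberlein compacta, whereas the paper states the (equivalent) open-$F_\sigma$ point-finite $T_0$-separating family definition; it would be worth saying explicitly which formulation you invoke, or observing that for zero-dimensional compacta the two coincide trivially.
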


In particular, for a countable Boolean algebra it turns out that being $\aleph_0$-Corson is equivalent to being superatomic.
A space $K$ is a \dfn{strong Eberlein compact}{strong Eberlein compact} if there is a $T_0$-separating point finite family consisting of open $F_\sig$ sets (see Proposition~\ref{Propeholnjewr} below). Strong Eberlein compacta were introduced and studied by Alster~\cite{Alster}.

A natural (and still useful) generalization of $\kappa$-Corson algebras is the following: We say that a Boolean algebra $\Be$ is \dfn{$\kappa$-Valdivia}{$\kappa$-Valdivia} if there exist a generating set $G \subs \Be$ and a topologically dense set $D \subs \ult(\Be)$ such that $|G \cap p| < \kappa$ for every $p \in D$.
This notion was introduced by Kalenda~\cite{Ka1} and, for infinite successor cardinals, was also studied by Bell and Marciszewski~\cite{BeMa}.
Note that every free Boolean algebra is $\aleph_0$-Valdivia.
In fact, taking the free algebra $\Fr(S)$ generated by $S$, 
the set $\setof{p \in \ult{(\Fr(S)})}{|p \cap S| < \aleph_0}$ is topologically dense. 
Indeed, let $W$ be a basic nonempty clopen set of $2^S$. 
So there are finite disjoint sets $\sigma, \tau \subseteq S$ such that 
$W = \prod \sigma \cdot \prod_{t \in \tau} -t$.
Then $\sigma \cup \setof{-r}{r \in S \setminus \sigma}$ extends to a unique ultrafilter $q$, and $| q \cap S | = |\sigma|<\aleph_0$.

In order to illustrate the rather delicate nature of the ultrafilter selection properties we present the following simple yet somewhat nontrivial statement.

\begin{prop}
\label{lemma-jkljlkjlk}
For every generating set $G$ of the interval algebra $\Be(\natR)$,
the set $\setof{ p \in \Ult(\Be(\natR)) }{ | G \cap p | < 2^{\aleph_0} }$
is nowhere dense in $\Ult(\Be(\natR))$.
\end{prop}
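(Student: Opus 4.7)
The plan is to show the stronger statement that for every nonempty basic clopen $U \subseteq \Ult(\Be(\natR))$ there exists a nonempty clopen $V \subseteq U$ consisting entirely of heavy ultrafilters, where $p$ is heavy if $|p \cap G| = 2^{\aleph_0}$. Such $U$ is determined by some $[c, d) \in \Be(\natR)$ with $c < d$ in $\natR$, and it suffices to exhibit a single nonempty $V' \subseteq [c, d)$ in $\Be(\natR)$ with $|\setof{g \in G}{V' \subseteq g}| = 2^{\aleph_0}$; any ultrafilter through $V'$ then contains all $2^{\aleph_0}$ of these generators, giving the desired $V = \setof{p}{V' \in p}$.

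The first observation is that since $G$ generates $\Be(\natR)$, every $r \in \natR$ lies in $\partial g$ for some $g \in G$ (otherwise the ultrafilters $r^+$ and $r^-$ would agree on every $g \in G$, hence on all of $\Be(\natR)$, contradicting $[r, \rightarrow) \in r^+ \setminus r^-$). Since each $g$ has finite boundary while $|(c, d)| = 2^{\aleph_0}$, the subfamily $G' = \setof{g \in G}{g \cap [c, d) \notin \set{\emptyset, [c, d)}}$ has cardinality $2^{\aleph_0}$. Each $g \in G'$ restricts to a finite disjoint union of maximal half-open intervals inside $[c, d)$.

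Next I run a cascade of pigeonhole refinements, using the König inequality $\cf(2^{\aleph_0}) > \aleph_0$ at each countable step. First, partition $G'$ by the number $k(g) \in \natN$ of maximal intervals of $g \cap [c, d)$, and fix $k^*$ with $|G'_{k^*}| = 2^{\aleph_0}$. Second, partition by whether the longest such interval has length at least $1/n$; fix $n^*$ with $1/n^* < (d - c)/2$ such that $2^{\aleph_0}$ many $g$'s satisfy this (possible, since enlarging $n^*$ only enlarges the class). Third, by finite pigeonhole on $i \leq k^*$, fix $i^*$ such that $2^{\aleph_0}$ many of these $g$'s have their $i^*$-th maximal interval $[a_g, b_g)$ of length at least $1/n^*$. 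Fourth, partition $[c, d - 1/n^*]$ into finitely many pieces of length less than $1/(4n^*)$ and fix one such piece $[c^*, c^* + 1/(4n^*))$ containing $a_g$ for $2^{\aleph_0}$ many $g$'s. For every such $g$ we have $a_g \leq c^* + 1/(4n^*)$ and $b_g \geq c^* + 1/n^*$, so the fixed nonempty interval $V' := [c^* + 1/(4n^*), c^* + 1/n^*)$ lies inside $[a_g, b_g) \subseteq g$.

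Therefore $V = \setof{p \in \Ult(\Be(\natR))}{V' \in p}$ is a nonempty clopen subset of $U$, every element of which contains the $2^{\aleph_0}$ generators $g$ with $V' \subseteq g$ and is hence heavy; in particular $V \cap L = \emptyset$, so $L$ is nowhere dense. The key difficulty is organising the four successive refinements so that one single nonempty interval $V'$ survives all of them simultaneously; the two uses of $\cf(2^{\aleph_0}) > \aleph_0$ (once over $k$, once over $n$) together with the freedom to enlarge $n^*$ so that $[c, d - 1/n^*]$ remains nondegenerate are the crucial points.
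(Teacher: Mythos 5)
Your argument is correct and takes essentially the same route as the paper's: inside an arbitrary nonempty clopen set, produce a single nonempty element $V'$ lying below $2^{\aleph_0}$ many generators via a countable pigeonhole using $\cf(2^{\aleph_0})>\aleph_0$, so that every ultrafilter through $V'$ is heavy. The paper compresses your four-stage cascade (number, length, and position of the component intervals) into one step, by choosing for each relevant $g$ a rational interval $d(g)\subseteq g\cap b$ from a fixed countable algebraically dense family $D$ and extracting a fiber of size $2^{\aleph_0}$.
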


\begin{pf}
	Fix a generating set $G$ in $\Be \eqdef \Be(\natR)$.
	Fix a countable set $D \subs \Be$ consisting of positive elements and dense in the algebraic sense, i.e., for every $a \nnempty$ there is $d \in D$ with $d \subs a$. For example, $D$ could be the set of all nonempty intervals $[s,t)$ with rational end-points.
	
Fix $b \in \Be$, $b \nnempty$. 
	Note that the set $G_b \eqdef \setof{g \cap b}{g\in G}$ generates the relative algebra $\Be \rest b$.
	In particular, $|G_b| = 2^{\aleph_0}$.
	For each $g \in G$ such that $g \cap b \nnempty$ choose $d(g) \in D$ with $d(g) \subs g \cap b$.
	There are $H \subs G$ and $e \in D$ such that $|H| = 2^{\aleph_0}$ and $d(h) = e$ for every $h \in H$.
	Finally $e \subs b$, and $|G \cap p| = 2^{\aleph_0}$ whenever $p \in \ult(\Be)$ is such that $e \in p$. 
\end{pf}

This shows that $\Be(\natR)$ is far from being $\cont$-Valdivia.
It turns out to be quite non-trivial to prove that a subalgebra of a $\kappa$-Corson algebra is $\kappa$-Corson
(Corollary~\ref{Thmjgwrgo}).
In fact, we are able to show it only in the case where $\kappa$ is a regular cardinal.

The names ``$\kappa$-Corson" and ``$\kappa$-Valdivia" are inspired by existing classes of Corson and Valdivia compact spaces, where $\kappa = \aleph_1$.
Usually, these classes are defined topologically. 
We explain it in the next section.

\separator

The present note is devoted to the study of $\kappa$-Corson and $\kappa$-Valdivia Boolean algebras, where $\kappa$ is an arbitrary uncountable regular cardinal. Such a study has already been made by Bell and Marciszewski~\cite{BeMa} for $\kappa$ being a successor cardinal. We extensively use the method of stable elementary submodels, characterizing $\kappa$-Corson algebras and their Stone spaces, extending Bandlow's characterization~\cite{Bandlow2} of Corson compacta. As an application, we show that $\kappa$-Corson Boolean algebras are closed under subalgebras, equivalently, $\kappa$-Corson compacta are closed under continuous images.
We also explore concrete examples, like free Boolean algebras over posets and, in particular, interval algebras. Finally, we study the natural pointwise convergence topology on the Boolean algebra, induced by the Stone duality.

\section{Corson-like compact spaces}
\label{section-corson-like}

In this section we sketch some results and arguments in general case (not necessarilly non-zero-dimensional setting).
Namely, Corson-like properties of general compact spaces, where Boolean algebras are not applicable.
Actually, the starting point is Proposition~\ref{Propeholnjewr}, where the notion of being $\kappa$-Corson compact originates.
We would like to characterize this property in terms of elementary submodels.
Actually, the case $\kappa=\aleph_1$ was treated by Bandlow~\cite{Bandlow1, Bandlow2, Bandlow3}.
Below we consider spaces of the form $C(K)$,
consisting of all real-valued continuous functions on a compact space $K$, always endowed with the pointwise convergence topology $\tau_p$. If $K$ is zero-dimensional, we shall also consider $C(K,2)$, consisting of all characteristic functions of clopen sets, which is clearly a closed subspace of $C(K)$. Note that $C(K)$ is a Banach space (endowed with the maximum norm), however we will be interested in the pointwise topology $\tau_p$ only.
We shall use the well known fact that given a continuous surjection $\map f K L$ between compact spaces, the dual operator $\map{f^*}{C(L)}{C(K)}$ is a topological embedding with respect to pointwise convergence topologies. It is also an isometric linear embedding, when $C(K)$ and $C(L)$ are treated as Banach spaces.

Given a set $S$, define
$$\Sigma_\kappa(S) = \setof{x \in [0,1]^S}{|\supp(x)| < \kappa},$$
where $\supp(x) \eqdef \setof{ \al \in S }{ x(\al) \ne 0 }$ is the \dfn{support}{support} of $x$. We consider $\Sigma_\kappa(S)$ endowed with the pointwise topology.

Given a family of sets $\Yu$ we define
$\ord(x,\Yu) = |\setof{U \in \Yu}{x \in U}|$.
This is usually called the \dfn{order}{order of a point} of $x$ with respect to $\Yu$.

\begin{df}\label{DFkapaCorsn}
	We say that a compact space $K$ is \dfn{$\kappa$-Corson}{$\kappa$-Corson} whenever $K$ is topologically embeddable in $\Sigma_{\kappa}(S)$ for some set $S$. 
	So Corson compact spaces correspond to $\aleph_1$-Corson spaces.
\end{df}

\begin{prop}
\label{Propeholnjewr}
Let $\kappa > \aleph_0$ be a cardinal.
For a compact space $K$, the following conditions are equivalent:
\begin{enumerate}
	\item[\rm(i)] The space $K$ is $\kappa$-Corson.
	\item[\rm(ii)] There exists a family $\Yu$ consisting of open $F_\sig$ subsets of $K$ which is $T_0$ separating, i.e., for every $x,y \in K$ there is $U \in \Yu$ satisfying $|U \cap \dn x y| = 1$, and $\ord(x,\Yu) < \kappa$ for each $x \in K$.
\end{enumerate}
Furthermore, if $K$ is $0$-dimensional, the family $\Yu$ from condition {\rm(ii)} may be assumed to consist of clopen sets.

Therefore any Boolean algebra $\natB$ is 
$\kappa$-Corson if and only if its space 
$\Ult(\natB)$ of ultrafilters is $\kappa$-Corson.
\end{prop}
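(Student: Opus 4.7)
The plan is to prove (i)$\iff$(ii) by directly translating between embeddings into $\Sigma_\kappa(S)$ and $T_0$-separating families of small order, and then to read off the $0$-dimensional refinement and the Boolean-algebraic corollary. For (i)$\Rightarrow$(ii), assuming $K\subs\Sigma_\kappa(S)$, I would take
$$\Yu\eqdef\setof{U_{s,q}}{s\in S,\ q\in\natQ\cap(0,1)},\qquad U_{s,q}\eqdef\setof{x\in K}{x(s)>q}.$$
Each $U_{s,q}$ is open as a preimage of an open interval under a coordinate projection, and is the countable union $\bigcup_n\setof{x\in K}{x(s)\goe q+\tfrac1n}$, hence open $F_\sig$. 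If $x\ne y$, pick $s$ with $x(s)\ne y(s)$ and a rational $q$ strictly between the two values; the resulting $U_{s,q}$ contains exactly one of $x,y$. Finally $x\in U_{s,q}$ forces $s\in\supp(x)$, so $\ord(x,\Yu)\loe|\supp(x)|\cdot\aleph_0<\kappa$ (only $\kappa>\aleph_0$ is needed; no regularity).

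For (ii)$\Rightarrow$(i), write each $U\in\Yu$ as $\bigcup_n F_n^U$ with $F_n^U$ closed, and invoke Urysohn's lemma (compact Hausdorff spaces are normal) to obtain continuous $f_n^U\colon K\to[0,1]$ vanishing off $U$ and equal to $1$ on $F_n^U$. The evaluation map
$$\phi\colon K\to[0,1]^{\Yu\times\omega},\qquad \phi(x)(U,n)\eqdef f_n^U(x),$$
is continuous, and by compactness it is a topological embedding once injectivity is established: if $U\in\Yu$ separates $x,y$ with $x\in U\not\ni y$, then $x\in F_n^U$ for some $n$, so $f_n^U(x)=1\ne 0=f_n^U(y)$. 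The support of $\phi(x)$ is contained in $\setof{(U,n)}{x\in U}$, of cardinality at most $\ord(x,\Yu)\cdot\aleph_0<\kappa$, so $\phi[K]\subs\Sigma_\kappa(\Yu\times\omega)$. For the $0$-dimensional refinement, I would replace each closed piece $F_n^U$ by a clopen set $V_n^U$ with $F_n^U\subs V_n^U\subs U$ (normality plus zero-dimensionality); the family $\setof{V_n^U}{U\in\Yu,\,n\in\omega}$ is still $T_0$-separating (any $U\in\Yu$ separating $x,y$ with $x\in U$ has $x\in F_n^U\subs V_n^U$ for some $n$, while $V_n^U\subs U\not\ni y$) and has order at most $\ord(x,\Yu)\cdot\aleph_0<\kappa$. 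Conversely, if $\Yu$ already consists of clopens, the characteristic functions embed $K$ into $\{0,1\}^{\Yu}\subs\Sigma_\kappa(\Yu)$ directly.

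Finally, the Boolean-algebraic equivalence follows from Stone duality. For $G\subs\natB$ put $\h g\eqdef\setof{p\in\Ult(\natB)}{g\in p}$. Then $G$ generates $\natB$ if and only if $\setof{\h g}{g\in G}$ is a $T_0$-separating clopen family on $\Ult(\natB)$, since generation of a Boolean algebra is equivalent to separation of its ultrafilters; moreover $\ord(p,\setof{\h g}{g\in G})=|G\cap p|$. Combining these observations with the clopen version of (i)$\iff$(ii) yields the desired equivalence between $\natB$ being $\kappa$-Corson and $\Ult(\natB)$ being $\kappa$-Corson. The main conceptual point in the whole argument is the use of the rational super-level sets $U_{s,q}$ rather than the naive coordinate-witness sets $\setof{x}{x(s)\ne 0}$: the latter already have the correct support structure but fail to $T_0$-separate two points sharing the same coordinate support, and the extra $\aleph_0$-factor introduced by quantifying over rational thresholds is absorbed freely because $\kappa>\aleph_0$.
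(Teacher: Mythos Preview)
Your proof is correct and follows essentially the same route as the paper. The only cosmetic difference is in (ii)$\Rightarrow$(i): the paper picks a \emph{single} continuous $f_U\colon K\to[0,1]$ with $f_U^{-1}(0,1]=U$ (using that in a compact Hausdorff space every open $F_\sigma$ is a cozero set), yielding an embedding into $[0,1]^{\Yu}$ rather than $[0,1]^{\Yu\times\omega}$; your more explicit version with countably many Urysohn functions per $U$ avoids quoting that fact and costs only the harmless extra factor of $\aleph_0$ in the support bound.
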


\begin{pf}
(i)$\implies$(ii) 
Assume $K \subs \Sigma_\kappa(S)$ and
let $$\Yu = \setof{\pi_\al^{-1}(r,1]}{\al \in S, \; r \in [0,1] \cap \Qyu},$$ 
where $\map {\pi_\al} K {[0,1]}$ is the projection of $K$ onto the $\al$-th coordinate.
It is clear that $\Yu$ satisfies (b).

(ii)$\implies$(i) For each $U \in \Yu$ choose a continuous function $\map {f_U}K{[0,1]}$ such that $f_U^{-1}(0,1] = U$.
Since $\Yu$ is $T_0$ separating, the diagonal map $\map f K {[0,1]^\Yu}$, defined by $f(x)(U) = f_U(x)$, is a continuous  embedding.
Clearly, $\img f K \subs \Sigma_\kappa(\Yu)$.

Finally, if $K$ is $0$-dimensional and $\Yu$ satisfies condition (ii), then we can replace each $U \in \Yu$ by a countable collection of clopen sets $\Vee_U$ such that $\bigcup \Vee_U = U$. 
Obviously $\Vee = \bigcup_{U \in \Yu} \Vee_U$ satisfies condition (ii).
\end{pf}

We continue with a rather trivial observation for which the proof is left to the reader.

\begin{prop}
\label{prop-yfsdio}
Let $\kappa$ be an infinite cardinal.
\begin{enumerate}

\item[\rm{(1)}] 
The product of\/ ${<}\cf(\kappa)$ many \,$\kappa$-Corson spaces is $\kappa$-Corson. 

\item[\rm{(2)}] 
The one-point compactification of a direct sum of  $\kappa$-Corson spaces is $\kappa$-Corson.

\item[\rm{(3)}] 
Every compact subspace of a $\kappa$-Corson space is $\kappa$-Corson. 
\qed
\end{enumerate}
\end{prop}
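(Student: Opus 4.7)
The strategy for all three parts is to use the concrete characterization from Definition~\ref{DFkapaCorsn}: the $\kappa$-Corson compacta are exactly the (closed) subspaces of $\Sigma_\kappa(S)$ for various index sets $S$. Part (3) is then immediate, since any embedding $K \hookrightarrow \Sigma_\kappa(S)$ restricts to any compact subspace. For part (1), given embeddings $K_i \hookrightarrow \Sigma_\kappa(S_i)$ for $i \in I$ with $|I| < \cf(\kappa)$ and the $S_i$ pairwise disjoint, the canonical embedding $\prod_{i \in I} K_i \hookrightarrow [0,1]^{\bigsqcup_i S_i}$ lands in $\Sigma_\kappa(\bigsqcup_i S_i)$: the support of a product point $(x_i)_i$ equals $\bigcup_i \supp(x_i)$, a union of fewer than $\cf(\kappa)$ sets each of cardinality less than $\kappa$, which by the very definition of $\cf(\kappa)$ has cardinality less than $\kappa$.

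For part (2), fix embeddings $f_i \colon K_i \hookrightarrow \Sigma_\kappa(S_i)$ with the $S_i$ pairwise disjoint, and for each $i \in I$ adjoin a fresh coordinate $*_i \notin S_i$, extending $f_i$ to $\tilde f_i \colon K_i \to \Sigma_\kappa(S_i \cup \{*_i\})$ by setting $\tilde f_i(x)(*_i) = 1$ for every $x$. With $S = \bigsqcup_i (S_i \cup \{*_i\})$, define $\phi$ from the one-point compactification of $\bigsqcup_i K_i$ into $[0,1]^S$ by letting $\phi(x)$ agree with $\tilde f_i(x)$ on the block $S_i \cup \{*_i\}$ and vanish elsewhere when $x \in K_i$, and $\phi(\infty) = 0$. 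Injectivity is secured by the distinguished coordinates $*_i$, which separate different blocks from each other and from $\infty$, and one checks directly that $\phi(x) \in \Sigma_\kappa(S)$ for every $x$. The only point requiring verification is continuity at $\infty$: a basic open neighborhood $V$ of $0 \in [0,1]^S$ depends on finitely many coordinates, hence involves only finitely many indices $i_1, \dots, i_n$, and $\phi^{-1}([0,1]^S \setminus V)$ is therefore a closed subset of the compact set $K_{i_1} \cup \dots \cup K_{i_n}$; its complement is a neighborhood of $\infty$ in the one-point compactification. Being a continuous injection from a compact space into a Hausdorff space, $\phi$ is a topological embedding.

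The only delicate step is (2): the auxiliary coordinates $*_i$ are indispensable, because without them any point that some $f_i$ happens to send to $0 \in \Sigma_\kappa(S_i)$ would collide both with $\phi(\infty)$ and with the analogous points of every other block, breaking injectivity. Once these separators are in place the continuity check at $\infty$ is routine, as each basic neighborhood of $0$ constrains only finitely many coordinates and therefore only finitely many of the $K_i$ at once.
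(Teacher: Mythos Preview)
Your proof is correct. The paper does not actually supply a proof of this proposition: it is stated with a terminal \qed\ and the preceding sentence reads ``We continue with a rather trivial observation for which the proof is left to the reader.'' Your argument is exactly the routine verification from Definition~\ref{DFkapaCorsn} that the authors had in mind, so there is nothing to compare; you have simply filled in the details they omitted.
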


A topological space is called \dfn{$\kappa$-Valdivia compact}{Valdivia compact}
if it is homeomorphic to a closed subspace $K$ of a cube $[0,1]^S$ such that
$D \eqdef K \cap \Sigma_\kappa(S)$ is dense in $K$.  So $D=K$ if and only if $K$ is $\kappa$-Corson.

A similar characterization to Proposition~\ref{Propeholnjewr}, involving a dense set, is true for $\kappa$-Valdivia compact spaces.

\subsection{Bandlow characterization of Corson-like spaces}
\label{Corson-like}

For set-theoretic notions, we follow Kunen~\cite{Ku}. 
Concerning the method
of elementary submodels we refer to \cite{Dow}, \cite[Ch. 24]{JW} and \cite[Ch. 17.1]{KKLP}.
Concerning
Corson and Valdivia compact spaces we refer to~\cite[Ch.~19]{KKLP}, Kalenda~\cite[Ch. 1]{Ka1} and \cite{Ka2}. 
In particular, $\aleph_1$-Corson and $\aleph_1$-Valdivia compact spaces are called 
Corson and Valdivia compact spaces.

Recall that for a space $K$ and an elementary submodel $M$ of some $H(\theta)$,  $K \in M$ means that both the set $K$ and {a base of} its topology $\tau$ are elements of $M$. 
Furthermore, if $K \in M$
then, by elementarity, $C(K)$ with its pointwise topology is also in $M$, because it is uniquely defined from parameters in $M$, namely by open intervals with rational endpoints. 

Following Bandlow~\cite{Bandlow2}, given an elementary submodel $M$ of some $H(\theta)$, given a compact 
space $K \in M$, we define the following equivalence relation on $K$ 
as follows: 
$$x \sim_M y \text{ \ whenever  for every  \ } f \in C(K) \cap M \text{  \ it holds that \ } f(x) = f(y).$$ 
Denote by $K \by M$ the quotient space with respect to this relation. 

Let us prove the following useful property of $\sim_M$.

\begin{lm}
\label{Lmeruvggh}
Let $K$ be a compact space and let $M$  be an  elementary submodel of some $H(\theta)$ satisfying $K \in M$.
Assume $A,B \subs K$ are closed sets such that $a \not \sim_M b$ whenever $a \in A$, $b \in B$.
Then there exists $h \in C(K) \cap M$ such that $h(a) = 0$ for every $a \in A$ and $h(b) = 1$ for every $b \in B$. 

Furthermore, if $K$ is 0-dimensional then we may assume that $h$ takes values in the set $\{0,1\}$. In other words, in this case there exists a clopen set $U \in M$ with $A \cap U = \emptyset$ and $B \subs U$.
\end{lm}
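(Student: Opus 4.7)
The plan is to realise $K/{\sim_M}$ explicitly as a compact Hausdorff space and separate $A$ from $B$ inside it. Let $\Aaa_M \eqdef C(K) \cap M$ and form the continuous diagonal map $\map e K {\natR^{\Aaa_M}}$ given by $e(x)(f) = f(x)$. By elementarity $\Aaa_M$ is closed under $\natQ$-linear combinations and products, and it contains every rational constant. The fibres of $e$ are precisely the $\sim_M$-equivalence classes, so $e(K)$ is a compact Hausdorff realisation of $K/{\sim_M}$. The hypothesis on $A$ and $B$ yields disjoint compact subsets $e(A), e(B) \subs e(K)$, whence Urysohn's lemma supplies a continuous $\map g {e(K)} {[0,1]}$ with $g \equiv 0$ on $e(A)$ and $g \equiv 1$ on $e(B)$.

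Next I would pull $g$ back into $\Aaa_M$ up to uniform approximation. The restrictions $\setof{\pi_f \rest e(K)}{f \in \Aaa_M}$ form a subalgebra of $C(e(K))$ that separates points and contains the constants, hence is uniformly dense by Stone--Weierstrass. Thus $g$ is a uniform limit of $\natQ$-coefficient polynomials in finitely many $\pi_f \rest e(K)$; pulling back through $e$ gives uniform approximations of $g \circ e$ by the corresponding polynomials in $\Aaa_M$. Pick $h_0 \in \Aaa_M$ with $\norm{h_0 - g \circ e}_\infty < 1/4$, so that $|h_0(a)| < 1/4$ for $a \in A$ and $|h_0(b) - 1| < 1/4$ for $b \in B$.

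The main obstacle---and the whole reason for working inside $M$---is converting this approximation into exact values without leaving $\Aaa_M$. I would set
$$ h \eqdef \max\bigl( 0,\, \min(1,\, 2(h_0 - 1/4)) \bigr), $$
which is built from $h_0$ by a single truncation with rational parameters and hence remains in $\Aaa_M$ by elementarity. The chosen thresholds force $h(a) = 0$ on $A$ and $h(b) = 1$ on $B$. For the zero-dimensional addendum, I would run the parallel argument with $\Aaa_M$ replaced by the Boolean subalgebra $B_M \eqdef \Clop(K) \cap M$ and $\natR^{\Aaa_M}$ replaced by $2^{B_M}$. The corresponding diagonal map identifies $K/{\sim_M}$ with $\Ult(B_M)$, which is zero-dimensional and compact Hausdorff. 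There the disjoint compact sets $e(A), e(B)$ can be separated by a clopen set $V$, and Stone duality gives $V = \widehat U$ for a unique $U \in B_M$. Since $e^{-1}(\widehat U) = U$ as subsets of $K$, this $U \in M$ is the desired clopen with $A \cap U = \emptyset$ and $B \subs U$.
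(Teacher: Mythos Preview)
Your argument is correct and takes a genuinely different route from the paper's. The paper proceeds by a two-stage finite-cover compactness argument: first fix $a \in A$ and, using that each $b \in B$ is $\sim_M$-separated from $a$, extract a finite subcover of $B$ to build $g_a \in C(K)\cap M$ with $g_a(a)=0$ and $g_a>1/2$ on $B$; then run the same trick over $A$ to obtain $h \in C(K)\cap M$ with $h<1/3$ on $A$ and $h>1/2$ on $B$; finally compose with a fixed piecewise-linear map in $M$ to force exact values. Your approach instead realises $K/{\sim_M}$ globally as $e(K)\subs\natR^{\Aaa_M}$, separates $e(A)$ from $e(B)$ by Urysohn, and pulls back via Stone--Weierstrass plus a rational truncation. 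The paper's method is more elementary (no Stone--Weierstrass), while yours is more conceptual: it makes explicit that $\Aaa_M$ is uniformly dense in the pullback of $C(K/{\sim_M})$, a viewpoint that recurs later in the paper when retractions ${\RR}_M$ are built.

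One point deserves care in your zero-dimensional addendum. You assert that the diagonal $\map{e}{K}{2^{B_M}}$ ``identifies $K/{\sim_M}$ with $\Ult(B_M)$'', i.e.\ that the fibres of the clopen diagonal coincide with the $\sim_M$-classes. Since $\sim_M$ is defined via $C(K)\cap M$, not via $\Clop(K)\cap M$, this is not automatic and is in fact essentially the content of the addendum in the singleton case. The gap is easily filled: given $f\in C(K)\cap M$ with $f(a)\ne f(b)$, elementarity gives (for any rational $\eps>0$) clopens $U_1,\dots,U_k\in M$ and rationals $q_1,\dots,q_k\in M$ with $\norm{f-\sum q_i\chi_{U_i}}<\eps$; taking $\eps<|f(a)-f(b)|/2$ forces some $U_i\in B_M$ to separate $a$ from $b$. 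Alternatively, you can simply apply your general case first to get $h\in C(K)\cap M$ with $h\equiv 0$ on $A$ and $h\equiv 1$ on $B$, and then invoke elementarity and zero-dimensionality to find a clopen $U\in M$ separating the disjoint closed sets $h^{-1}\{0\}$ and $h^{-1}\{1\}$.
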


\begin{pf}
Fix $a \in A$. For each $y \in B$ choose $f_y \in C(K) \cap M$ such that $f_y(a) = 0$ and $B \subs f_y^{-1}(1)$.
The family $\setof{\inv {f_y} {(1/2,1]}}{y \in B}$
 is an open cover of $B$, therefore it has a finite subcover, leading to a function $g \in C(K) \cap M$ such that $g(a) = 0$ and $g(y) > 1/2$ for every $y \in B$.
Let us denote the function $g$ as $g_a$, since it depends on the choice of $a \in A$.
Using the same argument, 
with $(1/2, 1]$ replaced by $[0,1/3)$,
 for the family $\sett{g_a}{a\in A}$, we get a function $h \in C(K) \cap M$ such that $h(a) < 1/3$ for every $a\in A$ and $h(b) > 1/2$ for every $b \in B$.
 Finally, replace $h$ by the composition $k \cmp h$, where $k \in M$ maps the interval $(1/2,1]$ onto $\sn1$ and maps the interval $[0,1/3)$ onto $\sn0$.
 
 If $K$ is 0-dimensional, we can repeat the same arguments replacing $[0,1]$ by $\{0,1\}$.
\end{pf}

If $M$ is an elementary submodel of some $\pair{H(\theta)}{\in}$ and $\kappa$ is a cardinal, then $\kappa \cap M$ is a set of ordinals but $\kappa \cap M$ is not in general an ordinal. 
So we introduce the following notion.

\begin{df}
	{An elementary submodel $M$ of some $\pair{H(\theta)}{\in}$ such that $\kappa \in M$ and $\kappa \cap M$ is an initial segment of $\kappa$ will be called \dfn{$\kappa$-stable}{stable submodel}.}
\end{df}

Here is perhaps the most important consequence of being $\kappa$-stable.

\begin{prop}\label{PROPsidvod}
	Assume $M \rloe H(\theta)$ is $\kappa$-stable, where $\kappa$ is a regular uncountable cardinal. Then
	 for every $A \in M$ such that $|A| < \kappa$ it holds that $A \subs M$.
\end{prop}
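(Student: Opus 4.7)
The plan is to enumerate $A$ by an ordinal below $\kappa$ inside $M$ and then use $\kappa$-stability to show every index is already a member of $M$.

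First, since $|A| < \kappa$ and both $A$ and $\kappa$ lie in $M$, the statement ``there exist an ordinal $\lambda < \kappa$ and a bijection $f \colon \lambda \to A$'' is true in $H(\theta)$, with all parameters in $M$. By elementarity of $M \rloe H(\theta)$, we can pick such $\lambda$ and $f$ inside $M$; in particular $\lambda \in \kappa \cap M$ and $f \in M$.

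Next, $\kappa$-stability gives us that $\kappa \cap M$ is an initial segment of $\kappa$, hence an ordinal $\mu \loe \kappa$. Since $\lambda \in \kappa \cap M = \mu$, we get $\lambda < \mu$, so every ordinal $\al < \lambda$ also lies in $\kappa \cap M \subs M$. Therefore $\lambda \subs M$.

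Finally, for each $\al < \lambda$, both $f$ and $\al$ are in $M$, so $f(\al)$ is definable from parameters in $M$ and, by elementarity, belongs to $M$. Since $f$ is onto $A$, we conclude $A = \setof{f(\al)}{\al < \lambda} \subs M$.

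The only delicate point is the step extracting the enumeration inside $M$: one needs to know that the witness $\lambda$ can be found in $M$ (this is pure elementarity with $\kappa, A \in M$) and that $\kappa$-stability then forces $\lambda \subs M$, which is precisely the content of the initial-segment condition. Once these two observations are combined, the conclusion is immediate.
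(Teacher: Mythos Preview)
Your proof is correct and follows essentially the same route as the paper: obtain an enumeration $f\colon\lambda\to A$ with $\lambda<\kappa$ inside $M$ by elementarity, use $\kappa$-stability to get $\lambda\subs M$, and conclude $A=\img f\lambda\subs M$. The only cosmetic difference is that the paper takes $\lambda=|A|$ directly (noting $|A|\in M$ since $A\in M$), whereas you pick an arbitrary witnessing ordinal below $\kappa$; both work.
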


\begin{pf}
Let $\lam := |A| < \kappa$. By elementarity, $\lam \in M$ and thus also $\lam+1 \in M$. 
Therefore $\lam \subs M$ because $M$ is $\kappa$-stable.
	Again by elementarity, there exists a bijection $f \in M$ from $\lam$ onto $A$. Finally, each $a \in A$ is of the form $f(\al)$ for some $\al \in \lam$. 
\end{pf}

The following rather trivial consequence of the above result will be used several times without explicit reference.

\begin{corollary}\label{CORsidvod}
 Let $K \subseteq \Sigma_\kappa(S)$ be a $\kappa$-Corson compact space and $M$ be a $\kappa$-stable submodel of a big enough  $H(\theta)$ such that $K, S \in M$ and $|M|<\kappa$.
	If $x \in K \cap M$ then $\supp(x) \subseteq S \cap M$. 
	\qed
\end{corollary}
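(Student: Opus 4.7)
The plan is to apply Proposition~\ref{PROPsidvod} directly. Suppose $x \in K \cap M$. Then $x \in M$ is a function on $S$ with $|\supp(x)| < \kappa$, since $x \in K \subseteq \Sigma_\kappa(S)$.

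Next I would invoke elementarity. The set $\supp(x)$ is definable from the parameters $x$ and $S$, both of which belong to $M$, via the formula $\supp(x) = \setof{\alpha \in S}{x(\alpha) \ne 0}$. Thus $\supp(x) \in M$. Moreover, $|\supp(x)|$ is a cardinal less than $\kappa$, and this cardinality is likewise definable from $x$ and $S$, so by elementarity $M$ knows that $|\supp(x)| < \kappa$.

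Now Proposition~\ref{PROPsidvod} applies to $A \eqdef \supp(x)$: since $A \in M$ and $|A| < \kappa$, we conclude $A \subseteq M$. Combined with the trivial inclusion $\supp(x) \subseteq S$, this gives $\supp(x) \subseteq S \cap M$, as required. There is no real obstacle: the only care needed is to observe that $\supp(x)$ is definable from $M$-parameters so that it lies in $M$, after which $\kappa$-stability does all the work via Proposition~\ref{PROPsidvod}.
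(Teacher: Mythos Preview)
Your proof is correct and is exactly the argument the paper has in mind: the corollary is marked with \qed\ precisely because it follows immediately from Proposition~\ref{PROPsidvod} once one observes that $\supp(x)$ is definable from the parameters $x, S \in M$ and has cardinality $<\kappa$. The remark that ``$M$ knows $|\supp(x)|<\kappa$'' is not strictly needed here, since Proposition~\ref{PROPsidvod} only requires the real-world fact $|A|<\kappa$, but it does no harm.
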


The existence of $\kappa$-stable elementary substructures of cardinality $<\kappa$ is almost trivial in case $\kappa \eqdef \lambda^+$ is a successor cardinal. Namely, if $\kappa = \lam^+$ then every $M \rloe H(\theta)$ such that $\lam+1 \subs M$ is $\kappa$-stable, by the proof of Proposition~\ref{PROPsidvod}.
The general case follows from the following:

\begin{lemma}
\label{LmStlrbggnkn}
Let $\kappa < \theta$ be regular cardinals
and 
let $A \in H(\theta)$ be a set of cardinality $< \kappa$.
Then there exists a $\kappa$-stable elementary submodel $M$ of $H(\theta)$ such that $|M| < \kappa$ and $A \in M$.
\end{lemma}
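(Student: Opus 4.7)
The plan is to build $M$ as the union of an increasing $\omega$-chain of small elementary submodels of $H(\theta)$, alternating between putting in the parameters that matter and swallowing the sup of the current trace on $\kappa$. Since the statement is vacuous for $\kappa = \aleph_0$ (every elementary submodel of $H(\theta)$ already contains $\omega$, so no proper submodel has finite cardinality), I may silently assume $\kappa$ is uncountable and regular.

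First I would take, by the standard downward L\"owenheim--Skolem theorem, some $M_0 \rloe H(\theta)$ of cardinality ${<}\kappa$ with $A, \kappa \in M_0$. Recursively, assume $M_n \rloe H(\theta)$ has cardinality ${<}\kappa$, and put $\delta_n := \sup(\kappa \cap M_n)$. Regularity of $\kappa$ together with $|\kappa \cap M_n| \leq |M_n| < \kappa$ forces $\delta_n < \kappa$, hence $|\delta_n| < \kappa$. Now choose $M_{n+1} \rloe H(\theta)$ of cardinality ${<}\kappa$ with $M_n \cup \delta_n \subseteq M_{n+1}$; this is possible because the Skolem hull of a ${<}\kappa$-sized subset of $H(\theta)$ again has size ${<}\kappa$ (here I use that $\kappa$ is uncountable). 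Finally set $M := \bigcup_{n < \omega} M_n$. Then $M \rloe H(\theta)$ as a union of an elementary chain, $A, \kappa \in M$, and $|M| < \kappa$ since $\kappa$ is regular and uncountable, so an $\omega$-union of ${<}\kappa$-sized sets has cardinality ${<}\kappa$.

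It remains to check that $\kappa \cap M$ is an initial segment of $\kappa$. Fix $\alpha \in \kappa \cap M$ and $\beta < \alpha$. Then $\alpha \in M_n$ for some $n$, so $\alpha \leq \delta_n$ by definition of $\delta_n$, whence $\beta < \alpha \leq \delta_n$, and therefore $\beta \in \delta_n \subseteq M_{n+1} \subseteq M$. This yields $\kappa$-stability. The only delicate point in the whole argument is cardinality bookkeeping: one must verify both that enlarging $M_n$ to absorb all ordinals below $\delta_n$ does not breach the ${<}\kappa$ budget, and that the final $\omega$-union stays below $\kappa$. Both rest on regularity and uncountability of $\kappa$, which is the essential hypothesis; the successor case $\kappa = \lam^+$ singled out just before the lemma is of course subsumed, as there the chain can be replaced by a single step (close under $\lam + 1 \subseteq M$).
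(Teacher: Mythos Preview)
Your proof is correct and follows essentially the same construction as the paper: build an $\omega$-chain of elementary submodels of size ${<}\kappa$, at each step absorbing the ordinals below $\delta_n = \sup(M_n \cap \kappa)$, and take the union. One small quibble: your parenthetical that the $\kappa = \aleph_0$ case is ``vacuous'' is not quite right --- the conclusion is actually \emph{false} for $\kappa = \aleph_0$ (no elementary submodel of $H(\theta)$ is finite, so the existential fails), so the lemma tacitly requires $\kappa$ uncountable, which is the only case used in the paper.
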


Note that, by Proposition~\ref{PROPsidvod}, we also have $A \subs M$.
\begin{pf}
	Using the downward L\"owenheim-Skolem Theorem, we can build a chain $\sett{M_n}{\ntr}$ of elementary submodels of $H(\theta)$, each of cardinality $<\kappa$, such that $M_0$ is countable, $A \in M_0$, $M_{n+1}$ contains $M_n \subs \delta_n$, where $\delta_n = \sup (M_n \cap \kappa)$. By the regularity of $\kappa$, $\delta_n < \kappa$
as long as $|M_n| < \kappa$. Finally, $M := \bigcup_{\ntr}M_n$ is an elementary submodel of $H(\theta)$, its cardinality is $<\kappa$ and $M \cap \kappa = \bigcup_{\ntr} \delta_n$.
\end{pf}

We are now ready to introduce Property $\band \kappa$.

\begin{df}
\label{dfn-bk}
{Let $\kappa$ be an uncountable regular cardinal.
We say that a compact space $K$ \dfn{has Property $\band \kappa$}{Property $\band \kappa$ (for compact spaces)} if for every big enough $\theta$, for every $\kappa$-stable elementary submodel $M$ of $\pair {H(\theta)}{\in}$ such that $K \in M$ and $|M| < \kappa$, the equivalence relation $\sim_M$ is one-to-one on the set $\cl(K \cap M)$.}
\end{df}

The main result of this section is to prove the following theorem.

\begin{tw}\label{thm-753}
Let $\kappa$ be an uncountable regular cardinal.
A compact space is $\kappa$-Corson if and only if it has Property $\band \kappa$.
\end{tw}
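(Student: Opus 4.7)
The proof splits along the two implications. The forward direction $(\Rightarrow)$ is a support-localization argument; the reverse direction $(\Leftarrow)$ assembles, from a covering family of $\kappa$-stable elementary submodels, an embedding of $K$ into $\Sigma_\kappa(S)$ via Proposition~\ref{Propeholnjewr}.

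For $(\Rightarrow)$, assume $K \subs \Sigma_\kappa(S)$ and let $M \rloe H(\theta)$ be $\kappa$-stable with $K, S \in M$ and $|M|<\kappa$. I first establish $\cl(K \cap M) \subs \Sigma_\kappa(S \cap M)$: for $z \in K \cap M$ this is Corollary~\ref{CORsidvod}, while for $z$ in the closure and $\al \in S \setminus M$ one has $\pi_\al \equiv 0$ on $K \cap M$ and hence $\pi_\al(z)=0$ by continuity of $\pi_\al$. By Proposition~\ref{PROPsidvod}, each $\al \in S \cap M$ lies in $M$, so each projection $\pi_\al$ belongs to $C(K) \cap M$. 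Thus $x \sim_M y$ with $x, y \in \cl(K\cap M)$ forces $\pi_\al(x) = \pi_\al(y)$ for every $\al \in S \cap M$, and combined with $\supp(x),\supp(y) \subs S \cap M$ this yields $x = y$.

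For $(\Leftarrow)$, by Proposition~\ref{Propeholnjewr}(ii) it suffices to exhibit a $T_0$-separating family $\Yu$ of open $F_\sigma$ subsets of $K$ with $\ord(x,\Yu)<\kappa$ for every $x \in K$. Fix a sufficiently large regular $\theta$ and, using Lemma~\ref{LmStlrbggnkn}, build a continuous $\subs$-increasing chain $\seqof{M_\al}{\al<\mu}$ of $\kappa$-stable elementary submodels of $H(\theta)$, each containing $K$ and of cardinality $<\kappa$, with $\mu$ chosen large enough so that $K = \bigcup_{\al<\mu} \cl(K \cap M_\al)$. At each level $\al$, Property $\band\kappa$ collapses $\sim_{M_\al}$ to equality on $\cl(K \cap M_\al)$, so for any pair of disjoint closed subsets of $\cl(K \cap M_\al)$ Lemma~\ref{Lmeruvggh} furnishes an open $F_\sigma$ set $U \in M_\al$ separating them. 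Organize these into $\Yu_\al \subs M_\al$ that $T_0$-separates $\cl(K \cap M_\al)$, and set $\Yu = \bigcup_{\al<\mu} \Yu_\al$. With $\al(x) = \min\setof{\al}{x \in \cl(K\cap M_\al)}$, any two distinct $x, y$ both lie in $\cl(K \cap M_\al)$ for $\al = \max\{\al(x),\al(y)\}$ and are therefore separated by some element of $\Yu_\al$.

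The principal obstacle is the bound $\ord(x,\Yu)<\kappa$. The strategy is to arrange $\Yu_\al$ so that only sets contributed at levels where $x$ has already ``appeared'' can contain $x$ — e.g.\ by selecting separators from functions in $C(K) \cap M_\al$ that vanish on $K \cap M_\beta$ for $\beta<\al$, or by a reflection argument exploiting $\kappa$-stability to show that $x \in U \in M_\al$ essentially forces $\al \leq \al(x)$. Since each level contributes fewer than $\kappa$ sets (as $|M_\al|<\kappa$) and $\al(x)$ is a fixed ordinal below $\mu$, regularity of $\kappa$ keeps the total at $x$ below $\kappa$. Making this level-contribution estimate precise is the delicate point, and it is here that the ``initial segment'' nature of $\kappa \cap M_\al$ — i.e.\ $\kappa$-stability — is decisive.
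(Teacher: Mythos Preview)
Your forward direction is correct and matches the paper's Lemma~\ref{LMerighuerghvo}.

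The reverse direction has a structural gap. A continuous strictly increasing chain with $|M_\al|<\kappa$ for all $\al$ can have length at most $\kappa$: at the limit stage $\kappa$ the union would already have size $\geq\kappa$. Hence $\bigl|\bigcup_{\al<\mu} M_\al\bigr|\leq\kappa$, and $C(K)\cap\bigcup_\al M_\al$ can separate the points of $K$ only when $\w(K)\leq\kappa$. For $\w(K)>\kappa$ your chain cannot achieve $K=\bigcup_\al\cl(K\cap M_\al)$ and the construction stalls. The paper avoids this by \emph{induction on the weight} $\mu=\w(K)$: it takes a continuous chain of length $\cf\mu$ of $\kappa$-stable submodels with $|M_\al|<\mu$ (not $<\kappa$), so that each quotient $K/M_\al$ has weight $<\mu$ and is $\kappa$-Corson by the inductive hypothesis --- not by a direct appeal to Property~$\band\kappa$. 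The inductive hypothesis is what feeds Claim~\ref{LMbfskvbdso} (via Lemma~\ref{LMnioheboier}), producing at each successor stage a small-order family supported on $K_{\al+1}\setminus K_\al$.

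Your order-bound sketch also does not work as stated. The suggestion that ``$x\in U\in M_\al$ essentially forces $\al\leq\al(x)$'' is false: an open set known to $M_\al$ can perfectly well contain points that first appear at much later stages of the filtration. The paper's mechanism is different. The family $\Yu$ consists of pullbacks ${\RR}_{\al+1}^{-1}[V]$ where $V$ is an open $F_\sigma$ subset of $K_{\al+1}$ disjoint from $K_\al$; the bound $\ord(x,\Yu)<\kappa$ then comes from a dichotomy. If $\kappa$ many such pullbacks contain a point $x$, either $\kappa$ of them live at a single level (contradicting the order bound on $\Vee_\al$), or their levels are cofinal in some limit $\delta$, in which case ${\RR}_\delta(x)\in K_\delta=\bigcup_{\xi<\delta}K_\xi$ (using local tightness $<\kappa$, Proposition~\ref{prop-gsdfjgjhsdf}) already lies in some $K_\al$ with $\al<\delta$, contradicting $V_i\cap K_{\al_i}=\emptyset$ at later stages $\al_i>\al$. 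This retraction-and-layering argument --- Lemma~\ref{lemma-uiouoi}, Claim~\ref{LMpointwiselimits}, and the continuity of $\al\mapsto K_\al$ at limits --- is the missing ingredient, and it is precisely what the induction on weight makes available.
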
 

To prove this result we need some preliminary facts.
Given $p \in \cl(A)$
in a topological space, we define
$$\tight(p, A) = \min \setof{|B|}{p \in \cl(B) \text{ and }B \subs A}.$$
We call it the \dfn{local tightness of $p$ with respect to $A$}{$\tight(p, A)$: local tightness}.
We shall say that a topological space $K$ has \dfn{local tightness}{local tightness $<\kappa$} $<\kappa$ if $\tight(p,A) < \kappa$ for any $p \in \cl(A) \subs K$.
It may happen, in case $\kappa$ is a limit cardinal, that a space with local tightness $< \kappa$ has tightness $\kappa$ (the supremum of all cardinals of the form $\tight(p,A)$ with $p \in \cl (A) \subs K$).

\begin{prop}
\label{prop-gsdfjgjhsdf}
	Let $K$ be a compact space with Property $\band{\kappa}$. 
	Then $K$ has local tightness $<\kappa$.
\end{prop}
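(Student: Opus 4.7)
The plan is to fix $p \in \cl(A) \subs K$ and produce a set $B \subs A$ of size $<\kappa$ with $p \in \cl(B)$. Using Lemma~\ref{LmStlrbggnkn} applied to the parameter $\{K,A,p\}$, I first choose a $\kappa$-stable elementary submodel $M \prec H(\theta)$ of cardinality $<\kappa$ such that $K, A, p \in M$ (Proposition~\ref{PROPsidvod} applied to this finite parameter set puts its elements into $M$). I then set $B := A \cap M$, so $|B| \loe |M| < \kappa$, and the whole task reduces to showing $p \in \cl(B)$.

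Assume for contradiction that $p \notin \cl(B)$ and split into two cases. Case 1: some $c \in \cl(B)$ satisfies $p \sim_M c$. Since $p \in K \cap M \subs \cl(K \cap M)$ and $\cl(B) \subs \cl(A \cap M) \subs \cl(K \cap M)$, Property $\band\kappa$ forces $p = c$, giving $p \in \cl(B)$, a contradiction. Case 2: $p \not\sim_M c$ for every $c \in \cl(B)$. Applying Lemma~\ref{Lmeruvggh} to the disjoint closed sets $\{p\}$ and $\cl(B)$ yields $h \in C(K) \cap M$ with $h(p) = 0$ and $h \equiv 1$ on $\cl(B)$.

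The finish is a reflection by elementarity. The open set $V := h^{-1}[0,1/3)$ is a neighborhood of $p$, and since $p \in \cl(A)$ it meets $A$; thus the sentence $\exists a \in A\;(h(a) < 1/3)$ is true, and its parameters $A$, $h$, $1/3$ all belong to $M$. By elementarity there is a witness $a \in M$, so $a \in A \cap M = B$ with $h(a) < 1/3$, contradicting $h \equiv 1$ on $B$. The only delicate point I anticipate is the bookkeeping at the start: making sure $K$, $A$, and $p$ simultaneously end up as elements of a single $\kappa$-stable $M$ of size $<\kappa$ (handled by invoking Lemma~\ref{LmStlrbggnkn} and Proposition~\ref{PROPsidvod} on a finite parameter set) and verifying that each parameter in the reflected existential formula is genuinely in $M$ so that elementarity can be invoked.
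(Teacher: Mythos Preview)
Your proof is correct and follows essentially the same approach as the paper's: pick a $\kappa$-stable $M$ containing $p$, $A$, $K$, set $B = A \cap M$, and combine Lemma~\ref{Lmeruvggh} with an elementarity argument to derive a contradiction from $p \notin \cl(B)$. Your explicit case split spells out how Property $\band\kappa$ guarantees the hypothesis of Lemma~\ref{Lmeruvggh} (the paper compresses this into the phrase ``Since $K$ has Property $\band\kappa$ and $x \in M$''), and your final elementarity step is simply the contrapositive of the paper's.
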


\begin{pf} 
Fix $x \in \cl A \subs K$ and choose a $\kappa$-stable  elementary submodel $M$ of a big enough $H(\theta)$ such that $x, A, K \in M$ and $|M| < \kappa$.
	We claim that $x \in \cl(A \cap M)$.
	Suppose otherwise. 
	Since $K$ has Property $\band \kappa$ and $x \in M$, we can use Lemma~\ref{Lmeruvggh} in order to obtain $f \in C(K) \cap M$ such that $f(x) = 0$ and $f(a) = 1$ for every $a \in A \cap M$.
	But now $M$ thinks that $f$ separates $x$ from $A$, which means that $x \notin \cl A$, a contradiction.
\end{pf}

We show first the ``only if'' part of Theorem~\ref{thm-753}.

\begin{lm}\label{LMerighuerghvo}
Assume $\kappa$ is an uncountable regular cardinal and $K$ is a $\kappa$-Corson compact space.
Then $K$ has Property $\band \kappa$.
\end{lm}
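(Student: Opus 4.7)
The plan is to prove the lemma by exploiting the coordinate structure inherited from an embedding into $\Sigma_\kappa(S)$ and using $\kappa$-stability of $M$ to ensure that only ``small-support'' coordinates matter.

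Fix a big enough $\theta$ and a $\kappa$-stable elementary submodel $M$ of $\pair{H(\theta)}\in$ with $K \in M$ and $|M|<\kappa$. We need to verify that $\sim_M$ collapses no two distinct points of $\cl(K\cap M)$; so let $x,y\in\cl(K\cap M)$ with $x\sim_M y$, and aim to show $x=y$. Since $K$ is $\kappa$-Corson, by elementarity there exist a set $S$ and a topological embedding of $K$ into $\Sigma_\kappa(S)$ belonging to $M$; replacing $K$ with its image I may assume $K\subseteq\Sigma_\kappa(S)$ with $S\in M$. The key will then be to compare coordinate values $x(\alpha)$ and $y(\alpha)$ for every $\alpha\in S$, splitting into two cases according as $\alpha\in M$ or not.

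First I would handle $\alpha\in S\cap M$. Here the projection $\pi_\alpha\colon K\to[0,1]$ is uniquely definable from $\alpha$, $S$, and $K$, all of which lie in $M$, so $\pi_\alpha\in C(K)\cap M$. The assumption $x\sim_M y$ then yields immediately $x(\alpha)=\pi_\alpha(x)=\pi_\alpha(y)=y(\alpha)$.

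The second case, $\alpha\in S\setminus M$, is the one that actually uses $\kappa$-stability and is the crucial step. Choose a net $(z_\iota)$ in $K\cap M$ converging to $x$ (in the pointwise topology on $\Sigma_\kappa(S)$). By Corollary~\ref{CORsidvod}, $\supp(z_\iota)\subseteq S\cap M$ for every $\iota$, hence $z_\iota(\alpha)=0$. Continuity of $\pi_\alpha$ gives $x(\alpha)=\lim_\iota z_\iota(\alpha)=0$, and the same argument with $y$ in place of $x$ yields $y(\alpha)=0$; thus $x(\alpha)=y(\alpha)$ in this case as well. Combining the two cases, $x$ and $y$ agree coordinate-wise on $S$, so $x=y$, finishing the verification of Property $\band\kappa$.

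The main obstacle is conceptual rather than computational: one must notice that without the $\kappa$-stability hypothesis on $M$ (which forces every subset of $M$ of size $<\kappa$ that is an element of $M$ to be contained in $M$, via Proposition~\ref{PROPsidvod}), the support-inclusion $\supp(z)\subseteq S\cap M$ for $z\in K\cap M$ could fail, and then coordinates $\alpha\in S\setminus M$ could carry nonzero values in points of $K\cap M$, destroying the second case above. Everything else is a routine combination of elementarity and pointwise continuity of coordinate projections.
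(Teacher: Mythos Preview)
Your proof is correct and follows essentially the same approach as the paper: embed $K$ into $\Sigma_\kappa(S)$ with $S\in M$ via elementarity, use $\kappa$-stability (your Corollary~\ref{CORsidvod}) to see that every point of $K\cap M$---hence of $\cl(K\cap M)$---has support contained in $S\cap M$, and then observe that the coordinate projections $\pi_\alpha$ for $\alpha\in S\cap M$ lie in $C(K)\cap M$ and separate such points. The paper states slightly more, namely that $\cl(K\cap M)$ is \emph{exactly} the set of $x\in K$ with $\supp(x)\subseteq S\cap M$, but the extra inclusion is not needed for Property~$\band\kappa$ and your argument suffices.
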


\begin{pf}
	Assume $K \subs \Sigma_\kappa(S)$ and take $M \rloe H(\theta)$ such that $K,S \in M$.
	Then $\cl(K \cap M)$ is precisely the set of all $x \in K$ such that $\supp(x) \subs S\cap M$ (and so Property $\band \kappa$ holds for $K$).
	Indeed, if $x \in K \cap M$ then $\supp(x) \subs M$ because $M$ is $\kappa$-stable. 
	On the other hand, if $\supp(x) \subs S \cap M$ then, using elementarity, it is easy to see that every basic neighborhood of $x$ contains some element of $K\cap M$. 
	Indeed, a basic open set is determined by finitely many coordinates $\alpha_i \in S \cap M$ and open intervals with rational end-points on each coordinate $\al_i$.
\end{pf}

The next result is the key to prove the ``if'' part of Theorem~\ref{thm-753}. 

\begin{lm}
\label{lemma-uiouoi}
Let $\kappa$ be an uncountable regular cardinal, $K$ a compact space with Property $\band{\kappa}$, $M$ be a $\kappa$-stable elementary submodel of a big enough $H(\theta)$ with $K \in M$.
Then there is a retraction $\map {{\RR}_M} K K$ such that
\begin{enumerate}
	\item[$(1)$] $\img {{\RR}_M} K = \cl( K \cap M)$,
	\item[$(2)$] ${\RR}_M(x) = {\RR}_M(y)$ if and only if $x \sim_M y$, for every $x, y \in K$.
\end{enumerate}
\end{lm}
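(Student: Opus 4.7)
The plan is to define $\RR_M(x)$ as the unique element of $[x]_M \cap \cl(K\cap M)$, where $[x]_M$ denotes the $\sim_M$-equivalence class of $x$, and then verify that this is a continuous retraction with the two listed properties. Note that each class $[x]_M = \bigcap_{f\in C(K)\cap M} f^{-1}(f(x))$ is closed, and the quotient $K\by M$ is Hausdorff, since every $f \in C(K)\cap M$ descends to a continuous function on it and such functions separate classes by the very definition of $\sim_M$.

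Granting that $[x]_M \cap \cl(K\cap M) \ne \emptyset$ for every $x \in K$, Property $\band\kappa$ forces this intersection to be a singleton, so $\RR_M$ is well defined. The remaining verifications then follow by a soft argument: the quotient map $q\colon K \to K\by M$ restricts to a continuous bijection from the compact Hausdorff space $\cl(K\cap M)$ onto the Hausdorff space $K\by M$, hence a homeomorphism; writing $\RR_M = (q|_{\cl(K\cap M)})^{-1}\circ q$ shows that $\RR_M$ is continuous, equal to the identity on $\cl(K\cap M)$, with image $\cl(K\cap M)$, and with $\sim_M$-classes as its fibres, giving (1) and (2) at once.

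The main obstacle is the surjectivity claim that every class $[x]_M$ meets $\cl(K\cap M)$, which I would prove by contradiction using Lemma~\ref{Lmeruvggh}. Suppose $[x]_M\cap\cl(K\cap M)=\emptyset$; the hypothesis of that lemma is satisfied because any $b\in\cl(K\cap M)$ with $b\sim_M x$ would lie in $[x]_M\cap\cl(K\cap M)$. The lemma produces $h\in C(K)\cap M$ with $h\equiv 0$ on $[x]_M$ and $h\equiv 1$ on $\cl(K\cap M)$. In particular $h(x)=0$, so the formula $\exists z\in K\,(h(z)<1/2)$ holds in $H(\theta)$; since $h,K\in M$, elementarity produces $z\in K\cap M$ with $h(z)<1/2$, contradicting $h(z)=1$. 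Everything else in the proof is formal once this reflection argument is in place.
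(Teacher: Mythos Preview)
Your argument is correct and follows essentially the same route as the paper: both define $\RR_M(x)$ as the unique point of $[x]_M\cap\cl(K\cap M)$, invoke Lemma~\ref{Lmeruvggh} together with elementarity for existence (the paper phrases the contradiction as ``$M\models f\equiv 1$''), and appeal to Property~$\band\kappa$ for uniqueness. The only difference is in the continuity step: the paper argues directly by using Lemma~\ref{Lmeruvggh} once more to produce, for each neighborhood $V$ of $\RR_M(x)$, a function $f\in C(K)\cap M$ with $f(x)=0$ and $f\equiv 1$ on $\cl(K\cap M)\setminus V$, so that $U=f^{-1}[0,1/2)$ satisfies $\RR_M[U]\subs V$; your factorization $\RR_M=(q|_{\cl(K\cap M)})^{-1}\circ q$ through the compact Hausdorff quotient $K\by M$ is a clean and equally valid alternative.
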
 

\begin{pf}
	Fix $x \in K$.
	We claim that there is a unique $y \in \cl(K\cap M)$ such that $x \sim_M y$.
	Indeed, supposing that $x \not\sim_M y$ for every $y \in \cl(K \cap M)$, by Lemma~\ref{Lmeruvggh} we would have a function $f \in C(K) \cap M$ such that $f(x) = 0$ and $f(y) = 1$ for every $y \in \cl(K \cap M)$. Then $M \models$ ``$f = 1$ constantly'', which is a contradiction.
	This shows the existence of $y \in \cl(K \cap M)$ satisfying $x \sim_M y$. Uniqueness follows directly from the definition of $\sim_M$. 
Denote this unique $y$ by ${\RR}_M(x)$.
	Conditions (1) and (2) are clearly satisfied.
	It is also clear that ${\RR}_M(x) = x$ for every $x \in \cl(K \cap M)$.
	It remains to show that ${\RR}_M$ is continuous.
	
	Fix $x \in K$ and fix a neighborhood $V$ of ${\RR}_M(x)$ in $\cl(K \cap M)$.
	By Lemma~\ref{Lmeruvggh}, there is $f \in C(K) \cap M$ such that $f(x) = 0$ and $f(y) = 1$ for every $y \in \cl(K \cap M) \setminus V$.
	Let $U = f^{-1} [0,1/2)$. Then $U$ is a neighborhood of $x$ and since $U \in M$, we deduce that ${\RR}_M(z) \in U$ whenever $z \in U$, because ${\RR}_M(z) \sim_M z$ for every $z \in K$. 
	Thus ${\RR}_M[U] \subs U \cap \cl(K \cap M) \subs V$. This shows the continuity of ${\RR}_M$ and finishes the proof.
\end{pf}

\begin{comment*}
\label{comment-000}
\begin{rm}
{\em In what follows, we identify $K/M$ and $\cl(K \cap M)$} whenever $K$ is $\kappa$-Corson compact and $M$ is a suitable elementary submodel of a big enough $H(\theta)$.
More precisely by Lemma~\ref{lemma-uiouoi}, the map $\pi_M$ from $\cl(K \cap M)$ into $K/M$ defined by $\pi_M(x) = x/ {\sim_M}$ is a bijection  and a  homeomorphism onto whenever we endow $K/M$ with the quotient topology. 
\hfill$\blacksquare$
\end{rm}
\end{comment*}

The proof of the next lemma is an adaptation of the proof of Lemma 13 in Bandlow~\cite{Bandlow2}.

\begin{lm}
\label{LMnioheboier}
	Property $\band{\kappa}$ is closed under continuous images.
\end{lm}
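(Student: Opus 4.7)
The plan is to show that if $K$ has Property $\band\kappa$ and $f \colon K \to L$ is a continuous surjection, then $L$ has Property $\band\kappa$. Fix a $\kappa$-stable elementary submodel $M \preceq H(\theta)$ with $L \in M$ and $|M| < \kappa$; by Skolem-closing $M \cup \{K, f\}$ (which preserves $\kappa$-stability and the cardinality bound, as only finitely many generators are added) we may additionally assume $K, f \in M$. The task is to verify $\sim_M$ is one-to-one on $\cl(L \cap M)$.

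The first step is the set-theoretic identity $f[\cl(K \cap M)] = \cl(L \cap M)$. The inclusion $\subseteq$ follows from continuity of $f$ together with $f[K \cap M] \subseteq L \cap M$ (using $f \in M$). For $\supseteq$, every $y \in L \cap M$ has some preimage in $K$ by surjectivity, and by elementarity there is one in $K \cap M$; thus $L \cap M \subseteq f[K \cap M]$, and since $f$ is a closed map between compact Hausdorff spaces, taking closures yields $\cl(L \cap M) \subseteq f[\cl(K \cap M)]$.

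Next, apply Lemma~\ref{lemma-uiouoi} to $K$ to obtain a continuous retraction $R \colon K \to \cl(K \cap M)$ with $R(x) \sim_M x$ for all $x \in K$, and set $F := f \circ R$. For every $g \in C(L) \cap M$ the composite $g \circ f$ lies in $C(K) \cap M$, so $R(x) \sim_M x$ gives $g(F(x)) = (g \circ f)(R(x)) = (g \circ f)(x) = g(f(x))$. Hence $F(x) \sim_M f(x)$ in $L$ for every $x \in K$, and $F(K) \subseteq \cl(L \cap M)$.

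To conclude, suppose toward contradiction that $y_1, y_2 \in \cl(L \cap M)$ satisfy $y_1 \sim_M y_2$ and $y_1 \neq y_2$. By the first step, pick $x_i \in f^{-1}(y_i) \cap \cl(K \cap M)$; then the closed sets $A_i := f^{-1}(y_i) \cap \cl(K \cap M)$ are nonempty and disjoint, and any $(a, b) \in A_1 \times A_2$ consists of distinct elements of $\cl(K \cap M)$, hence $a \not\sim_M b$ by Property $\band\kappa$ of $K$. Lemma~\ref{Lmeruvggh} applied in $K$ then produces $h \in C(K) \cap M$ with $h \equiv 0$ on $A_1$ and $h \equiv 1$ on $A_2$. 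The main obstacle — and the technical crux of the proof — is to convert this $K$-side separator $h$ into an $L$-side separator $g \in C(L) \cap M$ with $g(y_1) \neq g(y_2)$, which would contradict $y_1 \sim_M y_2$. The conversion exploits the descent property $F \sim_M f$ together with elementarity inside $M$: one transports $h$ via the retraction $R$ and uses that $M$ sees the resulting distinction between $y_1$ and $y_2$ as a first-order fact about the closed sets $f[h^{-1}(0)]$ and $f[h^{-1}(1)]$ in $L$. This adapts Bandlow's argument from \cite{Bandlow2}, originally formulated for the countable case $\kappa = \aleph_1$, to the setting of general uncountable regular $\kappa$ via $\kappa$-stable elementary submodels.
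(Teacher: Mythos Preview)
Your setup is largely fine and matches the paper's: you correctly establish $f[\cl(K\cap M)]=\cl(L\cap M)$, you correctly identify the nonempty disjoint closed sets $A_i=f^{-1}(y_i)\cap\cl(K\cap M)$, and you correctly invoke Lemma~\ref{Lmeruvggh} to get $h\in C(K)\cap M$ with $h\equiv 0$ on $A_1$ and $h\equiv 1$ on $A_2$. But the final paragraph is where the actual content lies, and your sketch there does not go through.

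The sets $f[h^{-1}(0)]$ and $f[h^{-1}(1)]$ need not be disjoint. Your $h$ is constrained only on $A_1\cup A_2\subseteq K_0:=\cl(K\cap M)$; a point $z\in f^{-1}(y_1)\setminus K_0$ may well satisfy $h(z)=1$, putting $y_1$ into $f[h^{-1}(1)]$. Transporting via $R$ does not help: $h\circ R\notin M$, and to conclude $R(z)\in A_1$ you would need $f(R(z))=y_1$, which is exactly the injectivity of $\sim_M$ on $\cl(L\cap M)$ that you are trying to prove. So the elementarity argument you gesture at has no contradiction to reach.

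The paper's fix is to separate $A_1$ not from $A_2$ but from the larger set $A_1':=K_0\setminus f^{-1}(U)$, where $U,V$ are disjoint open neighborhoods of $y_1,y_2$ chosen in advance. The resulting $h$ then satisfies $h^{-1}(0)\cap K_0\subseteq f^{-1}(U)$, so any witness $p\in h^{-1}(0)\cap M\subseteq K_0$ has $f(p)\in U$; doing the same on the other side forces $f(p)=f(q)\in U\cap V=\emptyset$. This is precisely the missing idea, and it is what makes the elementarity step work.

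A side remark: your claim that Skolem-closing $M\cup\{K,f\}$ preserves $\kappa$-stability is not correct in general --- adding parameters can introduce new ordinals below $\kappa$ via Skolem functions and destroy the initial-segment property. The paper simply assumes $\phi\in M$ from the outset, which is the standard convention here; one does not enlarge an arbitrary given $M$.
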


\begin{pf}
Fix a continuous surjection $\map \phi K L$ and assume $K$ has Property $\band{\kappa}$.

Fix a big enough elementary $\kappa$-stable submodel $M  \rloe H(\theta)$ such that $\phi \in M$, and thus $K, L \in M$.
Fix $x \ne y$ in $\cl(L \cap M)$ and denote $K_0 = \cl(K \cap M)$.

Let $A = \phi^{-1}(x) \cap K_0$ and $B = \phi^{-1}(y) \cap K_0$.
Choose disjoint open sets $U$, $V$ in $L$ such that $x \in U$, $y \in V$.
Let $A' \eqdef K_0 \setminus \inv \phi U$.
Using Property $\band{\kappa}$ of $K$, note that $a \not \sim_M a'$ whenever $a \in A$, $a' \in A'$.
By Lemma~\ref{Lmeruvggh}, there is a continuous function $h \in M$ such that $\img h A = \sn0$ and $\img h{A'} = \sn1$.
Then the set $F \eqdef h^{-1}(0) \cap A$
 is nonempty, closed, belongs to $M$ and satisfies
$$\phi^{-1}(x) \cap K_0 = A \subs F \oraz F \cap K_0 \subs \inv \phi U.$$
Similarly, the set $H \eqdef h^{-1}(1) \cap  B$ is nonempty, closed, belongs to $M$ and satisfies
$$\phi^{-1}(y) \cap K_0 = B \subs H \oraz H \cap K_0 \subs \inv \phi V.$$
We claim that $\img \phi F \cap \img \phi H = \emptyset$.
Indeed, suppose $\img \phi F \cap \img \phi H \nnempty$. 
Using elementarity we find $p \in F \cap M$, $q \in H \cap M$ such that $\phi(p) = \phi(q)$.
Since $p,q \in K_0$, necessarily $\phi(p) \in U$ and $\phi(q) \in V$, which contradicts the fact that $U \cap V = \emptyset$.
So $\img \phi F$, $\img \phi H$ are disjoint closed subsets of $L$ that belong to $M$. 

Elementarity (together with Urysohn's Lemma) provides a continuous function $g \in M$ separating $\img \phi F$ from $\img \phi H$.
Finally, note that $x \in \img \phi F$, $y \in \img \phi H$ and hence $x \not\sim_M y$.
\end{pf}

\begin{pf}[Proof of Theorem {\rm{\ref{thm-753}}}]
We use induction on the weight $\w(K)$ of the space $K$. So, assume $K$ has Property $\band{\kappa}$ and the theorem is valid for compact spaces of weight $< \mu$. 

Since $K$ is embeddable in $[0,1]^{\w(K)}$, if $\w(K) < \kappa$, then $K$, as product of $<\kappa$ \ $\kappa$-Corson spaces,  is $\kappa$-Corson. 
So we may assume that $\w(K) = \mu \goe \kappa$. 

Let $\lam = \cf \mu$.
We fix a continuous chain $\sett{M_\al}{1 \loe \al \loe \lam}$ of $\kappa$-stable elementary submodels of a big enough $H(\theta)$ such that $K \in M_1$, $|M_\al| < \mu$ for $\al < \lam$ and $M_\lam$ contains a fixed base for the topology of $K$ (in particular, $K \by M_\lam = K$). 
The embedding $\map{ F }{ K }{ C(K) }$ defined by $F(x) = \seqnn{ f(x) }{f \in C(K) }$ induces a one-to-one map 
$\map{ F_\alpha }{ K/M_\alpha }{ C(K) \cap M_\alpha }$\,. 
Therefore $ \w(K/M_\alpha) \leq | M_\alpha | < \mu$ and thus $K/M_\alpha$ is $\kappa$-Corson, for every $1 \leq \alpha<\lambda$.

Let $K_\al := K \by M_\al$ and by Lemma~\ref{lemma-uiouoi},  let $\map{{\RR}_\al}{K}{K_\al}$ be the associated retraction. We also define  $K_0 \subs K_1$ to be a fixed one-element subset $\{ x_0\}$ of $K_1$ and we define $\map{{\RR}_0}{K}{K_0}$ to be the constant mapping. 
Furthermore, 
\begin{itemize}
\item[{\rm}]
$K_\al \eqdef K/M_\alpha = \cl(K \cap M_\al)$ 
and ${\RR}_\al$ is identity on $K_\al$.
\end{itemize}
Note that ${\RR}_\al \cmp {\RR}_\beta = {\RR}_{\min(\al,\beta)}$ for every $\al,\beta < \lam$. 
So, we have a chain $\sett{K_\al}{\al \loe \lam}$ such that $K_\lam = K$ and $K_\delta = \cl(\bigcup_{\xi < \delta} K_\xi)$ for every limit ordinal $\delta \loe \lam$.
As $K$ has local tightness $<\kappa$ (Proposition~\ref{prop-gsdfjgjhsdf}), we know that $K_\delta = \bigcup_{\xi < \delta} K_\xi$, whenever $\delta$ is a limit ordinal. 
For $\alpha<\beta$,  considering the retraction 
$\fnn{ {\RR}_{\alpha}^{\beta}  \eqdef  \RR_{\alpha} {\restriction} K_\beta\,  }
{\,  K/M_{\beta} }{ K/M_\alpha }$,
we  have the following situation:
$$\begin{tikzcd}
	& & & K \ar[dll]_{{\RR}_\al} \ar[d]^{{\RR}_{\al+1}} \\
	K_0 \cdots & \cdots K\by M_\al \ar[l] & & K\by M_{\al+1} \ar[ll]^{{\RR}_{\al}^{\al+1}} \cdots & \cdots K \ar[l] \, . 
\end{tikzcd}$$

\noindent
We shall need the following claims.

\begin{claim}\label{LMbfskvbdso}
	Assume $K$ is $\kappa$-Corson of weight $<\mu$ and $A \subs K$ is a closed set.
	Then there exists a family $\Yu$ consisting of open $F_\sig$ subsets of $K$ such that $\bigcup \Yu = K \setminus A$, $\Yu$ is $T_0$ separating on $K \setminus A$, and 
	$\ord(x, \Yu)  < \kappa$ for every $x \in K$.
\end{claim}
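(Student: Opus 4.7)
The plan is to reduce to the simpler case where $A$ is a single point by passing to the quotient space $K/A$. Let $q : K \to K/A$ denote the quotient map and $\ast \in K/A$ the point that $A$ collapses to. Since $K$ is compact Hausdorff and $A$ is closed, $K/A$ is compact Hausdorff with $\w(K/A) \leq \w(K) < \mu$. As $K$ is $\kappa$-Corson, Lemma~\ref{LMerighuerghvo} gives that $K$ has Property $\band{\kappa}$, and Lemma~\ref{LMnioheboier} transfers this to the continuous image $K/A$. The inductive hypothesis of Theorem~\ref{thm-753}, applied to $K/A$, then yields that $K/A$ is $\kappa$-Corson.

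Next, I fix an embedding $K/A \hookrightarrow \Sigma_\kappa(S)$ and write $p \in \Sigma_\kappa(S)$ for the image of $\ast$, noting that $|\supp(p)| < \kappa$. On $K/A$ I consider the family
\[ \Vee' = \{\pi_\alpha^{-1}(J) \cap (K/A) : \alpha \in S,\ J \text{ a rational open subinterval of } [0,1] \text{ with } p_\alpha \notin J\}. \]
Each element is open $F_\sig$ in $K/A$ and disjoint from $\ast$. Given $y \in K/A \setminus \{\ast\}$, some coordinate has $y_\alpha \neq p_\alpha$, and a small rational interval $J$ containing $y_\alpha$ but not $p_\alpha$ places $y$ inside the corresponding member of $\Vee'$, so $\bigcup \Vee' = K/A \setminus \{\ast\}$; $T_0$-separation on $K/A \setminus \{\ast\}$ follows by the same argument applied to the three points $y_\alpha$, $z_\alpha$, $p_\alpha$ in $[0,1]$. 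The crucial quantitative estimate is
\[ \ord(y, \Vee') \leq \aleph_0 \cdot |\{\alpha \in S : y_\alpha \neq p_\alpha\}| \leq \aleph_0 \cdot (|\supp(y)| + |\supp(p)|) < \kappa, \]
using that $\kappa$ is uncountable regular.

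Finally, I set $\Yu := \{q^{-1}(V') : V' \in \Vee'\}$. Each $q^{-1}(V')$ is an open $F_\sig$ subset of $K$ (preimage of an open $F_\sig$ under a continuous map) disjoint from $A$; and because $q$ restricts to a bijection $K \setminus A \to K/A \setminus \{\ast\}$, the covering property $\bigcup \Yu = K \setminus A$, $T_0$-separation on $K \setminus A$, and the identity $\ord(x, \Yu) = \ord(q(x), \Vee') < \kappa$ for every $x \in K$ all transfer immediately from $\Vee'$ to $\Yu$. The main obstacle is the first step, establishing that $K/A$ is $\kappa$-Corson: this rests on invoking the inductive hypothesis of Theorem~\ref{thm-753}, legitimised by the two preservation lemmas together with the bound $\w(K/A) \leq \w(K) < \mu$; once this is in hand, the rest of the argument is a straightforward unpacking of $\Sigma_\kappa$ together with the choice of rational intervals that avoid $p_\alpha$.
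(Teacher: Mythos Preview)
Your proof is correct and follows essentially the same route as the paper: pass to the quotient $K/A$, use Lemma~\ref{LMnioheboier} together with the inductive hypothesis of Theorem~\ref{thm-753} to conclude that $K/A$ is $\kappa$-Corson, embed it in $\Sigma_\kappa(S)$, and pull back a suitable family of open $F_\sigma$ sets avoiding the distinguished point. The only cosmetic difference is that the paper invokes the homogeneity of $\Sigma_\kappa(S)$ to place the collapsed point at $0$ and then uses the half-intervals $(r,1]$, whereas you work directly with an arbitrary image point $p$ and choose rational intervals avoiding $p_\alpha$; your order bound via $|\supp(y)|+|\supp(p)|$ handles this cleanly.
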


\begin{pf}
	Let $L := K \by A$, that is, $L$ is obtained from $K$ by collapsing $A$ to a single point $a$. Let $\map q K L$ be the corresponding quotient map. So $q^{-1}(a) = A$ and $q$ is one-to-one on $K \setminus A$.
	Then $L$ has Property $\band{\kappa}$, being a continuous image of $K$ (by Lemma~\ref{LMnioheboier}).
	We already know that $\band{\kappa}$ of $L$ implies that $L$ is $\kappa$-Corson, because the weight of $L$ is $< \mu$. 
	Thus, we may assume that $L \subs \Sigma_\kappa(S)$ for some set $S$.
	It is well-known that $\Sigma_\kappa(S)$ is homogeneous with respect to points, therefore we may assume that $a = 0$.
	Define
	$$\Yu = \setof{q^{-1} \pi_\al^{-1} (r,1]}{\al \in S,\; r \in [0,1] \cap \Qyu}.$$
	Then $\Yu$ is the required family.
\end{pf}

\begin{claim}\label{LMpointwiselimits}
	Assume $x \ne y$ in $K$. Let $\beta \loe \lam$ be the minimal ordinal such that ${\RR}_\beta(x) \ne {\RR}_\beta(y)$. Then $\beta$ is a successor ordinal.
\end{claim}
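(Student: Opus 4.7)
The plan is to prove the claim by contradiction, assuming $\beta$ is a limit ordinal and deriving that $\RR_\beta(x) = \RR_\beta(y)$, which contradicts the minimality of $\beta$. The decisive structural fact is that the chain $\sett{M_\al}{1 \loe \al \loe \lam}$ was chosen to be \emph{continuous}, so for every limit ordinal $\delta \loe \lam$ we have $M_\delta = \bigcup_{\xi < \delta} M_\xi$, and consequently $C(K) \cap M_\delta = \bigcup_{\xi < \delta}\bigl(C(K) \cap M_\xi\bigr)$.

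Assume then that $\beta$ is a limit ordinal, and fix an arbitrary $f \in C(K) \cap M_\beta$. By continuity of the chain there is some $\xi < \beta$ with $f \in M_\xi$. The retraction $\RR_\xi$ was defined so that $x \sim_{M_\xi} \RR_\xi(x)$ and $y \sim_{M_\xi} \RR_\xi(y)$ (cf. the construction in the proof of Lemma~\ref{lemma-uiouoi}); in particular $f(x) = f(\RR_\xi(x))$ and $f(y) = f(\RR_\xi(y))$. By the minimality of $\beta$, we have $\RR_\xi(x) = \RR_\xi(y)$, and combining these equalities yields $f(x) = f(y)$. Since $f$ was arbitrary, $x \sim_{M_\beta} y$, and therefore by Lemma~\ref{lemma-uiouoi}(2), $\RR_\beta(x) = \RR_\beta(y)$, contradicting the defining property of $\beta$.

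I do not anticipate a real obstacle; the argument is a soft one, reducing the limit step to the continuity of the chain at the level of the submodels themselves. In particular, it deliberately sidesteps the more delicate question whether $K_\beta = \bigcup_{\xi<\beta} K_\xi$ holds at the limit stage $\beta$ (which would require an assumption on the cofinality of $\beta$ against $\kappa$), and instead works only with $C(K) \cap M_\beta$, where the analogous union statement is automatic from the choice of the chain.
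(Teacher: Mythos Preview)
Your argument is correct and follows essentially the same route as the paper: both exploit the continuity of the chain to write $M_\beta = \bigcup_{\xi<\beta} M_\xi$, use that $\RR_\xi(x) = \RR_\xi(y)$ is equivalent to $x \sim_{M_\xi} y$ (Lemma~\ref{lemma-uiouoi}(2)), and derive $x \sim_{M_\beta} y$ at the limit stage. The paper phrases the contradiction by starting from a witness $f \in C(K) \cap M_\beta$ to $x \not\sim_{M_\beta} y$ and pushing it down into some $M_\xi$, whereas you quantify over all $f$; also, the paper notes explicitly that $\beta > 0$ (since $K_0$ is a singleton), which you should add to conclude that $\beta$ is genuinely a successor rather than merely non-limit.
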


\begin{pf}
	Note that $\beta>0$, because $K_0$ is a singleton. Suppose $\beta$ is a limit ordinal. Then ${\RR}_\al(x) = {\RR}_\al(y)$ for $\al < \beta$, which means that $x \sim_{M_\al} y$ for $\al < \beta$. On the other hand, $x \not \sim_{M_\beta} y$, therefore there is $f \in C(K) \cap M_\beta$ such that $f(x) \ne f(y)$. We have that $M_{\beta} = \bigcup_{\al < \beta }M_\al$, because $\beta$ is a limit ordinal.
	Thus $f \in M_\al$ for some $\al < \beta$, showing that $x \not \sim_{M_\al} y$, a contradiction.
\end{pf}

We now come back to the proof of Theorem~\ref{thm-753}:  
bear in mind that $\lam = \cf \mu$ where $\w(K) = \mu \goe \kappa$, and the theorem is valid for compact spaces of weight $< \mu$. 
We have seen that $\w(K_\alpha) \leq | M_\alpha | < \mu$. 
So for each $\alpha<\lambda$, since $K_\alpha$ satisfies $\band{\kappa}$, the space $K_\alpha$ is $\kappa$-Corson and thus using Claim~\ref{LMbfskvbdso} we choose a family $\Vee_\al$ of open $F_\sig$ subsets of $K_{\al+1}$ such that $\bigcup \Vee_\al = K_{\al+1} \setminus K_\al$ and $\ord(x, \Vee_\al) < \kappa$ for every $x \in K_{\al+1}$.
Define
$$\Yu = \setof{{\RR}_{\xi+1}^{-1} V}{ \xi < \lam, \; V \in \Vee_\xi}.$$
We claim that $\Yu$ is a family witnessing that $K$ is $\kappa$-Corson.

Clearly, $\Yu$ consists of open $F_\sig$ sets. We show that $\Yu$ is $T_0$ separating.

Fix $x \ne y$ in $K$ and let $\beta$ be minimal such that ${\RR}_\beta(x) \ne {\RR}_\beta(y)$.
Then $\beta = \al+1$ for some $\al<\lam$, by Claim~\ref{LMpointwiselimits}.

If ${\RR}_\beta(x) \notin K_\al$ or ${\RR}_\beta(y) \notin K_\al$ then there is $V \in \Vee_\al$ separating ${\RR}_\beta(x)$ from ${\RR}_\beta(y)$ and hence ${\RR}_\beta^{-1}V \in \Yu$ separates $x$ from $y$.
Suppose now that ${\RR}_\beta(x) \in K_\al$ and ${\RR}_\beta(y) \in K_\al$.
Then ${\RR}_\al(x) = {\RR}_\al {\RR}_\beta(x) = {\RR}_\beta(x)$ and similarly ${\RR}_\al(y) = {\RR}_\beta(y)$. On the other hand, by the minimality of $\beta$ we get ${\RR}_\al(x) = {\RR}_\al(y)$, a contradiction.

It remains to show that $\ord(x,\Yu) < \kappa$ for every $x \in K$.
Suppose $\sett{{\RR}_{\al_i+1}^{-1}V_i}{i < \kappa} \subs \Yu$ has nonempty intersection. Refining this family, we may assume that $\sett{\al_i}{i < \kappa}$ is either strictly increasing or constant.
Choose $a \in \bigcap_{i<\kappa}{\RR}_{\al_i+1}^{-1}V_i$  ($\subseteq K$).
If $\al_i = \al$ for $i < \kappa$, then $\sett{V_i}{i < \kappa} \subs \Vee_\al$ and hence ${\RR}_{\al+1}(a) \in \bigcap_{i<\kappa}V_i$, contradicting the fact that $\ord(x, \Vee_\al) < \kappa$ for $x \in K_{\alpha+1} \subseteq K$.
Thus, we may assume that $\sett{\al_i}{i < \kappa}$ is strictly increasing. Let $\delta = \sup_{i < \kappa}\al_i$.
Then $\delta \loe \lam$ is a limit ordinal of cofinality $\kappa$.
Let $b = {\RR}_\delta(a)$. Then $b \in K_\delta$ and ${\RR}_{\al_i+1}(b) = {\RR}_{\al_i+1}(a) \in V_i$ for $i<\kappa$. 
On the other hand, $K_\delta = \bigcup_{\xi < \delta}K_\xi$ (because the local tightness of $K$ is $<\kappa$), therefore $b \in K_\al$ for some $\al < \delta$. 
Now fix $\alpha_i > \alpha$. Then $b \notin V_i$, because $V_i \cap K_{\al_i} = \emptyset$. This is a contradiction.

Thus we have shown that $\ord(x, \Yu) < \kappa$ for every $x \in K$, which completes the proof of Theorem~\ref{thm-753}.	
\end{pf} 

\subsection{Basic properties of $\kappa$-Corson spaces}
\label{properties of Corson-like-space}

As a consequence of Lemma~\ref{LMnioheboier} and of Theorem~\ref{thm-753} we have: 

\begin{wn}\label{WNsduhsd}
\label{cor-hfdjskhjkf}
Assume $\kappa$ is an uncountable regular cardinal. Every continuous image of a $\kappa$-Corson compact space is $\kappa$-Corson. 
\qed 
\end{wn}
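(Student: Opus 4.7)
The plan is to reduce the statement directly to the two results already established, namely Lemma~\ref{LMnioheboier} (Property $\band \kappa$ is preserved by continuous images) and Theorem~\ref{thm-753} (a compact space is $\kappa$-Corson if and only if it has Property $\band \kappa$). So the corollary becomes a one-line chase through these equivalences, and I would present it as such.

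More precisely, I would fix a continuous surjection $\map \phi K L$ where $K$ is a $\kappa$-Corson compact space. First, applying the ``only if'' direction of Theorem~\ref{thm-753}, I would conclude that $K$ has Property $\band \kappa$. Next, I would invoke Lemma~\ref{LMnioheboier} to transfer this property along $\phi$, obtaining that $L$ also has Property $\band \kappa$. Finally, applying the ``if'' direction of Theorem~\ref{thm-753} to $L$ yields that $L$ is $\kappa$-Corson, which is the desired conclusion. There is no loss of generality in assuming $\phi$ is surjective, since if $L'$ is a continuous image of $K$ via some $\psi$, one may replace $L'$ by $\psi[K]$, which is compact.

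There is essentially no obstacle remaining at this stage; the real work was done in proving the Bandlow-type characterization (Theorem~\ref{thm-753}), whose ``if'' direction required the transfinite construction of retractions $\RR_\alpha$ and the cofinality argument for the order of the witnessing family $\mathcal U$. All the subtlety involving $\kappa$-stable elementary submodels and local tightness has already been absorbed into those earlier results, so the corollary is a formal deduction. Accordingly, the proof I would write is essentially one sentence citing the two references, followed by \qed.
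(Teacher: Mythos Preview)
Your proposal is correct and matches the paper's approach exactly: the paper simply states that the corollary is a consequence of Lemma~\ref{LMnioheboier} and Theorem~\ref{thm-753}, which is precisely the chain of implications you spell out.
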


In case $\kappa = \aleph_1$, namely, for Corson compacta, the result above was first proved by Gul$'$ko~\cite{Gulko} (see also \cite{MiRu} and \cite{Gru}). We have obtained it by means of elementary submodels, however, extracting the necessary closing-off arguments one can possibly obtain a slightly more direct proof of Corollary~\ref{WNsduhsd}.

Concerning further basic properties of $\kappa$-Corson spaces, we state three results.

\begin{tw}
\label{Thmweteig7878}
\label{thm-3.57878}
Let $\kappa = \cf \kappa > \aleph_0$ and assume that $K$ is a continuous image of a $\kappa$-Valdivia compact space. 
Then $K$ is $\kappa$-Corson 
if and only if $K$ has local tightness $< \kappa$.
\end{tw}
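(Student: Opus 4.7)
The plan is to handle the two implications separately, with $(\Leftarrow)$ going through Theorem~\ref{thm-753}. For $(\Rightarrow)$, I would argue that $\Sigma_\kappa(S)$ itself has local tightness $<\kappa$, so any subspace $K$ does too. Given $x \in \cl A$ in $\Sigma_\kappa(S)$, I would run an $\omega$-step selection starting from $\supp(x)$: at each stage $n$, pick a representative of $A$ approximating $x$ within $1/n$ on each finite subset of the accumulated support, then enlarge the support by the chosen point's support. The resulting set $A_0 \subseteq A$ has size $<\kappa$ with $x \in \cl A_0$; the cardinality stays below $\kappa$ because $\cf\kappa > \aleph_0$ and $|\supp(x)| < \kappa$, while coordinates outside the accumulated support cause no trouble since points of $\Sigma_\kappa(S)$ vanish on them.

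For $(\Leftarrow)$, fix a continuous surjection $\map{\phi}{L}{K}$ with $L \subseteq [0,1]^S$ closed and $D \eqdef L \cap \Sigma_\kappa(S)$ dense in $L$. To verify Property $\band\kappa$ for $K$ via Theorem~\ref{thm-753}, I would fix a $\kappa$-stable elementary submodel $M \rloe H(\theta)$ with $L, K, S, \phi \in M$ and $|M|<\kappa$, and set $L_M \eqdef \cl(D \cap M)$. Two observations would drive the proof. (i) By Proposition~\ref{PROPsidvod}, each element of $D \cap M$ has support inside $S \cap M$, so $L_M \subseteq \Sigma_\kappa(S \cap M) \cap L$ is a $\kappa$-Corson subspace of $L$ on which $\sim_M$ is one-to-one, distinct points being separated by some $\pi_s \in C(L) \cap M$ with $s \in S \cap M$. (ii) Local tightness of $K$ combined with elementarity forces $\phi(L_M) = \cl(K \cap M)$: for $a \in K \cap M$, density of $\phi(D)$ in $K$ gives $a \in \cl \phi(D)$; local tightness provides $Y \subseteq \phi(D)$ with $|Y|<\kappa$ and $a \in \cl Y$; elementarity lets us assume $Y \in M$, and $\kappa$-stability forces $Y \subseteq \phi(D \cap M)$, so $a \in \cl \phi(D \cap M) \subseteq \phi(L_M)$. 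By Corollary~\ref{WNsduhsd}, $\cl(K \cap M)$ is therefore $\kappa$-Corson as a continuous image of $L_M$.

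To conclude that $\sim_M$ is one-to-one on $\cl(K \cap M)$, I would adapt the transfer argument of Lemma~\ref{LMnioheboier}. Given distinct $x, y \in \cl(K \cap M)$ with lifts $\tilde x \ne \tilde y$ in $L_M$, pick disjoint open $U \ni x, V \ni y$ in $K$ and apply Lemma~\ref{Lmeruvggh} inside $L$ to the disjoint closed subsets $\phi^{-1}(x) \cap L_M$ and $L_M \setminus \phi^{-1}(U)$ of $L_M$ (which are $\sim_M$-separated by observation~(i)) to obtain $h_1 \in C(L) \cap M$ vanishing on the first and equal to $1$ on the second; symmetrically produce $h_2$ for $y$. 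Setting $F \eqdef h_1^{-1}(0)$ and $G \eqdef h_2^{-1}(0)$ yields closed subsets of $L$ in $M$ satisfying $F \cap L_M \subseteq \phi^{-1}(U)$ and $G \cap L_M \subseteq \phi^{-1}(V)$; proving $\phi(F) \cap \phi(G) = \emptyset$ would then let Urysohn's lemma inside $M$ produce $f \in C(K) \cap M$ separating $x$ from $y$. The main obstacle is exactly this last equality: by elementarity a putative witness $p \in F, q \in G$ with $\phi(p) = \phi(q)$ can be chosen in $M$, but $L \cap M$ is generally strictly larger than $L_M$, so one cannot immediately conclude $p, q \in L_M$ (and hence $\phi(p) \in U \cap V = \emptyset$). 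Overcoming this will require a second use of local tightness of $K$, approximating any supposed common value $w = \phi(p) = \phi(q)$ by a small subset $Y \subseteq \phi(D \cap M)$ in $M$ whose preimages lie in $L_M$, thereby forcing the contradiction.
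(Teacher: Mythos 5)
Your forward implication is correct: the $\omega$-step closing-off showing that $\Sigma_\kappa(S)$ itself has local tightness $<\kappa$ works (regularity of $\kappa$ keeps the accumulated support of size $<\kappa$, and both $x$ and the selected points vanish off it), and this is precisely the direct argument the paper alludes to in \S\ref{Valdivia}; the paper's own proof instead just quotes Proposition~\ref{prop-gsdfjgjhsdf}. Your observations (i) and (ii) in the backward direction are also sound and run parallel to the opening of the paper's argument, with $L_M=\cl(D\cap M)$ playing the role of the paper's $L_S=\setof{x\in L}{\supp(x)\subs S}$, $S=\lam\cap M$ (a density-plus-elementarity argument shows these two sets actually coincide).

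The gap you flag at the end is, however, fatal to the route you chose rather than a missing detail. Lemma~\ref{Lmeruvggh} only constrains $h_1$ on $A\cup A'\subs L_M$; off $L_M$ the zero set $F=h_1^{-1}(0)$ is completely uncontrolled, and likewise $G$. Nothing prevents $F\setminus L_M$ and $G\setminus L_M$ from meeting a common fibre $\phi^{-1}(w)$ for some $w$ far from $U\cup V$, so the disjointness $\img \phi F\cap\img \phi G=\emptyset$ is not provable and can genuinely fail; local tightness cannot rescue it, since knowing $w\in\cl(K\cap M)=\img \phi {L_M}$ only produces \emph{some} preimage of $w$ in $L_M$, with no reason for that preimage to lie in $F$ or in $G$. (In Lemma~\ref{LMnioheboier} the analogous step works because the domain has Property $\band{\kappa}$, so the elementarity witnesses $p,q\in M$ automatically land in $\cl(K\cap M)$, which is exactly where $F$ and $H$ are controlled; here $L\cap M\not\subs L_M$, as you yourself note.) The paper's proof avoids applying Lemma~\ref{Lmeruvggh} to $L$ altogether: it separates the fibres $f^{-1}(y_0)\cap L_S$ and $f^{-1}(y_1)\cap L_S$ inside $L_S$ by finite unions of basic open boxes whose coordinates all lie in $S=\lam\cap M$ --- such sets belong to $M$ by construction, with no appeal to Lemma~\ref{Lmeruvggh} --- and then uses the restriction map $x\mapsto x\cdot\chi_S$, shown to send $L$ into $L_S$ by an elementarity argument exploiting the density of $D$, to control the traces of these open sets on all of $L$ and not just on $L_S$. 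That coordinatewise projection, available only because the Valdivia structure lives inside a cube, is the idea your argument is missing; it is what replaces the elementarity-witness step that breaks in your version.
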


\begin{proof}
Suppose first that $K$ is $\kappa$-Corson. 
Then, by Theorem~\ref{prop-gsdfjgjhsdf}, $K$ has Property $\band \kappa$ and thus $K$ has local  tightness $< \kappa$. 

Now suppose $\map f L K$ is a continuous surjection with $L$ $\kappa$-Valdivia and assume $K$ has local tightness $< \kappa$. We may assume $L \subs [0,1]^\lam$ so that $D := L \cap \Sigma_\kappa(\lam)$ is dense in $L$.
Note that $K = \img f D$, because of the local tightness $<\kappa$. Indeed, the set $\img f D$ is dense and $\kappa$-closed in $K$, namely, the closure of every subset $B \subs \img fD$ of cardinality $<\kappa$ is contained in $\img fD$.

We now show that $K$ has Property $\band{\kappa}$.
Fix a suitable $M \rloe H(\theta)$ with $|M| < \kappa$ and $f \in M$.
Fix $y_0 \ne y_1$ in $\cl(K \cap M)$.
Let $S := \lam \cap M$ and let
$$L_S := \setof{x \in L}{\supp(x) \subs S}.$$
Let $A_i := f^{-1}(y_i) \cap L_S$. Then $A_0$, $A_1$ are disjoint closed subsets of $L_S$.
By compactness and normality, there are disjoint open sets $U_0 \sups A_0$, $U_1 \sups A_1$ that are unions of basic open sets. In other words, $U_0$, $U_1$ are determined by disjunctions of formulas of the form $x(\al_0) \in I_0 \land \dots \land x(\al_{k-1}) \in I_{k-1}$, where $x$ is a variable indicating an element of $L$ and $I_i$, $i<k$ are open intervals with rational end-points. In particular, $U_0, U_1 \in M$. We now look at these sets in $L$ (using the same definitions, namely, formulas involving finitely many coordinates and associated rational intervals). If there is $x \in U_0 \cap U_1$, then $x \cdot \chi_S$ satisfies the same requirements, namely, $x \cdot \chi_S \in U_0 \cap U_1$. We will obtain a contradiction as long as we show that $x \cdot \chi_S \in L_S$.

Suppose otherwise and choose a basic neighborhood $W$ of $x \cdot \chi_S$ disjoint from $L$, involving finitely many coordinates and open intervals with rational end-points. Let $W_S$ be the restriction of $W$ obtained by discarding all the coordinates outside of $S$. So $W_S \sups W$ and $W_S \in M$. Now $M$ knows that $L\cap W_S \nnempty$, because $x \in L \cap W_S$. By elementarity, there is $x' \in L \cap M$ with $x' \in W_S$. Note that $x' \in W$, because $x'(\al) = 0 = x(\al)$ whenever $\al \in \lam \setminus S$.
Thus $x' \in W \cap L$, a contradiction.

We have shown that $U_0$, $U_1$ are disjoint open sets in $L$ that are also elements of $M$. Finally, $\img f {L \setminus U_0}$, $\img f {L \setminus U_1$} is a closed cover of $K$ known by $M$ and separating $y_0$ from $y_1$.

Indeed, suppose $y_0 \in \img f {L \setminus U_0}$. Then there is $x_0 \in L$ with $f(x_0) = y_0$ and $x_0 \notin U_0$. By the arguments above, $x_0 \cdot \chi_S \in L$ and it is outside of $U_0$, as the definition of $U_0$ uses coordinates from $S$ only. On the other hand, every element of $A_0 = f^{-1}(y_0) \cap L_S$ is in $U_0$, a contradiction.
Thus $\img f {L \setminus U_0}$, $\img f {L \setminus U_1$} indeed separate $y_0$, $y_1$.
This completes the proof.
\end{proof} 

Let $X$ be a space, $x \in X$  and let $\kappa$ be an infinite  cardinal. 
We say that $x$ is a \dfn{$G_\kappa$-point}{$G_\kappa$-point} 
if $\{x\}$ is the intersection of less than $\kappa$ open sets. 
So a $G_{\aleph_1}$-point is a $G_{\delta}$-point. 
\koment{Robert comment: Say that $x$ is a strong $G_\kappa$-point if $x$ is a $G_\kappa$-point but not a $G_\lambda$-point for $\lambda<\kappa$. Does this notion has some interest?}

\begin{proposition}
\label{fact-uiuio}
Assume $\kappa$ is an infinite\footnote{It was pointed out by the Referee that the proof of Proposition~\ref{fact-uiuio} is valid without the assumption on the regularity of $\kappa$.} cardinal.
Then any $\kappa$-Corson space has a $G_\kappa$-point. 
Moreover, the set of $G_\kappa$-points is topologically dense in the space.
\end{proposition}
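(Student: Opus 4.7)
The plan is to fix a nonempty open $V\subseteq K$, where $K\subseteq\Sigma_\kappa(S)$, and exhibit a $G_\kappa$-point of $K$ inside $V$. By shrinking, I may assume $V$ is a basic open subset of $K$, i.e.\ determined by requiring that a finite tuple of coordinates lie in prescribed open intervals. The case $\kappa=\aleph_0$ is handled by Proposition~\ref{prop-1212}: then $K$ is a strong Eberlein compactum, hence scattered, so the isolated points (which coincide with the $G_{\aleph_0}$-points) are dense in $K$.

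Assume now $\kappa$ is uncountable. Starting from an arbitrary $y_0\in V$ with $A_0\eqdef\supp(y_0)$, I plan to construct by transfinite recursion a sequence $(y_\beta)$ in $V$ with supports $A_\beta\eqdef\supp(y_\beta)$ that strictly increase at successor steps. At stage $\beta$, set
$$U_\beta\eqdef V\cap\setof{z\in K}{z\restriction A_\beta = y_\beta\restriction A_\beta}.$$
If $U_\beta=\{y_\beta\}$, stop; otherwise pick any $y_{\beta+1}\in U_\beta\setminus\{y_\beta\}$, noting that since $y_{\beta+1}$ agrees with $y_\beta$ on $A_\beta$ yet differs from $y_\beta$, one must have $A_{\beta+1}\supsetneq A_\beta$. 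At a limit $\lambda$, define $y_\lambda$ coordinatewise: for each $\alpha\in S$ the sequence $(y_\beta(\alpha))_\beta$ is eventually constant (it stabilizes once $\alpha$ first enters some $A_\beta$, or is identically $0$ otherwise), so $y_\lambda$ is well-defined as the pointwise limit in $[0,1]^S$; the closedness of $K$ in $[0,1]^S$ then yields $y_\lambda\in K$ with $\supp(y_\lambda)=\bigcup_{\beta<\lambda}A_\beta$ of cardinality $<\kappa$. Basicness of $V$ guarantees $y_\lambda\in V$, because the finitely many coordinates defining $V$ retain admissible values in the limit.

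The crucial observation is that the recursion must terminate at some stage $\beta^*<\kappa$. Otherwise the strict support growth across $\kappa$ successor steps would force $|A_\kappa|\geq\kappa$; but $A_\kappa=\supp(y_\kappa)$ with $y_\kappa\in K\subseteq\Sigma_\kappa(S)$ forces $|A_\kappa|<\kappa$, a contradiction. This automatic cardinality bound is exactly why the argument goes through without assuming regularity of $\kappa$, as noted in the referee's footnote: compactness of $K$ always confines the transfinite supports below $\kappa$. At termination one has
$$\{y_{\beta^*}\}=V\cap\bigcap_{\alpha\in A_{\beta^*},\,n\in\omega}\setof{z\in K}{|z(\alpha)-y_{\beta^*}(\alpha)|<1/n},$$
an intersection of $1+|A_{\beta^*}|\cdot\aleph_0<\kappa$ open subsets of $K$, which exhibits $y_{\beta^*}$ as a $G_\kappa$-point lying in $V$. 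The hard part is verifying the limit stages; both facts $y_\lambda\in K\cap V$ and $\supp(y_\lambda)=\bigcup_{\beta<\lambda}A_\beta$ follow from the eventual constancy of $(y_\beta(\alpha))_\beta$ at every $\alpha\in S$ combined with the closedness of $K$ inside $[0,1]^S$.
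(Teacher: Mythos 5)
Your proposal is correct and follows essentially the same route as the paper: both locate a point that is maximal in the coordinatewise extension order $\preceq$ on $K\subseteq\Sigma_\kappa(S)$ (the paper via a maximal chain, you via a transfinite recursion that must halt before stage $\kappa$ because supports have size $<\kappa$) and then observe that such a point is cut out by the fewer than $\kappa$ many open conditions fixing its values on its support. The only substantive difference is that you run the construction inside an arbitrary basic open set $V$, which yields the density of $G_\kappa$-points directly, whereas the paper deduces density separately from compactness and Proposition~\ref{prop-yfsdio}(3).
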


\begin{proof}
 We adapt the proof to the well-known case where $K$ is a Corson compact space and $G_\lambda = G_\delta$. 
Let $S$ be such that $K$ is a compact subspace of  $\Sigma_\kappa(S) \subseteq [0,1]^S$.
For $x, y \in K$, we set $x \preceq y$ whenever $y$ extends $x$, i.e. $\supp(x) \subseteq \supp(y)$ and
$x(\alpha)=y(\alpha)$ for every $\alpha \in \supp(x)$.  
Let $C$ be a maximal chain in $\pair{K}{\preceq}$ and 
let $z = \bigcup C$. 
Since $K$ is compact and $z \in [0,1]^S$ is an accumulation point of $C$, we have that $z \in K$ and thus $ | \supp(z) | < \kappa$.

We prove that $z$ is a $G_\kappa$-point in $K$. 
Let $V$ be a basic open neighborhood of $z$ in $\Sigma_\kappa(S)$. 
So $V$ is determined by a finite subset $\sigma$ of $\supp(z)$: that is $V = \prod_{\beta \in \supp(z)} V_\beta$ where $ V_\beta = [0,1]$ for $\beta \in S \setminus \sigma$ and 
$V_{\beta} \eqdef (x(\beta) {-}r, \, x(\beta) {+}r) \cap [0,1]$ for $\beta \in \sigma$ and $r \in \natQ$. 
From the facts that 
(1):~the set of finite subsets $\sigma$ of $\supp(z)$ is of cardinality less than $\kappa$, 
(2):~$ | \supp(z) | < \kappa$ and thus the set of basic open neighborhoods $V$ of $z$ of the above form is of cardinality less than $\kappa$ and 
(3):~by the maximality of $C$: 
if $t \in K$ satisfies $t(\alpha)=z(\alpha)$ for every $\alpha \in S$ then $t = z$, it follows that $z$ is a $G_\kappa$-point. 

The topological density of $G_\kappa$-points follows from the compactness of $K$ and from Proposition~\ref{prop-yfsdio}(3).
\end{proof}

 Recall that for distinct infinite regular cardinals $\lambda$ and $\kappa$ we can have $2^{\lambda} = 2^{\kappa}$ and thus 
 $ \bigl| \, [0,1]^{\lambda} \,\bigr| = \bigl| \, [0,1]^{\kappa}\, \bigr| $. 
But $[0,1]^{\lambda}$ and $[0,1]^{\kappa}$ are not homeomorphic. 
This is an analogous result as Banach-Stone theorem.

\begin{corollary}\label{cor-ieyzuirzeuirez}
Let $\lambda$ and $\kappa$ be infinite cardinals. 
\begin{itemize}
\item[{\rm(1)}]
The space $[0,1]^{\lambda}$ is $\kappa$-Corson if and only if $\lambda < \kappa$.
\item[{\rm(2)}]
The space $\{ 0,1\}^{\lambda}$ is $\kappa$-Corson if and only if $\lambda < \kappa$. 
\end{itemize}
Moreover, the space $\kappa+1$ is $\kappa$-Valdivia but not $\kappa$-Corson.
\end{corollary}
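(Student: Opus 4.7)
\begin{pf}
For the ``if'' direction of (1) and (2), I would simply observe that whenever $\lambda<\kappa$, every $x\in[0,1]^\lambda$ has $|\supp(x)|\le\lambda<\kappa$, so $[0,1]^\lambda=\Sigma_\kappa(\lambda)$; in particular $\{0,1\}^\lambda\subs\Sigma_\kappa(\lambda)$ as well.

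For the converse, assume $\lambda\ge\kappa$; I plan to show that no point of $[0,1]^\lambda$ (respectively $\{0,1\}^\lambda$) is a $G_\kappa$-point, so that Proposition~\ref{fact-uiuio} precludes $\kappa$-Corsonness. Fix $x$ and suppose $\sn x=\bigcap_{i\in I}U_i$ with $|I|<\kappa$. Shrinking each $U_i$ to a basic open neighbourhood $V_i\ni x$ depending on a finite set $F_i\subs\lambda$ of coordinates, the union $A:=\bigcup_{i\in I}F_i$ has cardinality at most $|I|\cdot\aleph_0<\kappa\le\lambda$, so some $\alpha\in\lambda\setminus A$ exists. The point $y$ defined by $y(\beta)=x(\beta)$ for $\beta\ne\alpha$ and $y(\alpha)=1-x(\alpha)$ (a legal choice in both cubes) is distinct from $x$ yet satisfies $y\in\bigcap_iV_i\subs\bigcap_iU_i=\sn x$, a contradiction.

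For the moreover, I would first exhibit the embedding $e\colon\kappa+1\to\{0,1\}^\kappa$ defined by $e(\alpha)=\chi_{[0,\alpha)}$. Continuity and injectivity are immediate (the $\beta$-th coordinate of $e(\alpha)$ equals $1$ precisely on the clopen set $(\beta,\kappa]\subs\kappa+1$), so $e$ is a topological embedding by compactness. For $\alpha<\kappa$ we have $|\supp(e(\alpha))|=|\alpha|<\kappa$, while $e(\kappa)$ lies in the closure of $e[\kappa]$; hence $e[\kappa+1]\cap\Sigma_\kappa(\kappa)=e[\kappa]$ is dense in $e[\kappa+1]$, witnessing that $\kappa+1$ is $\kappa$-Valdivia. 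To see that $\kappa+1$ is \emph{not} $\kappa$-Corson (here taking $\kappa$ to be a regular uncountable cardinal), I combine Lemma~\ref{LMerighuerghvo} with Proposition~\ref{prop-gsdfjgjhsdf}: any $\kappa$-Corson compactum has Property $\band{\kappa}$ and therefore local tightness ${<}\kappa$. But $\kappa\in\Cl{\kappa}$ inside $\kappa+1$, while by regularity every $A\subs\kappa$ of cardinality ${<}\kappa$ is bounded and so $\kappa\notin\Cl{A}$; hence $\tight(\kappa,\kappa+1)\ge\kappa$, a contradiction. The only non-routine step in the entire argument is the coordinate bookkeeping used to eliminate $G_\kappa$-points; all the rest is a direct application of results already in the paper.
\end{pf}
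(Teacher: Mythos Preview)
Your proof is correct and follows essentially the same route as the paper's: both use Proposition~\ref{fact-uiuio} (absence of $G_\kappa$-points) for the ``only if'' direction of (1) and (2), both embed $\kappa+1$ via characteristic functions of initial segments (the paper phrases this as ``non-increasing functions from $\kappa$ to $\{0,1\}$''), and both rule out $\kappa$-Corsonness of $\kappa+1$ via the tightness of the top point. The only differences are cosmetic: you spell out the coordinate argument that the paper abbreviates as ``clearly has no $G_\kappa$-points'', and you are explicit that the tightness argument for $\kappa+1$ requires $\kappa$ to be regular uncountable, a restriction the paper's proof leaves implicit.
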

\begin{proof} 
Since the product of $<\kappa$ \ $\kappa$-Corson spaces is $\kappa$-Corson, if $\lambda<\kappa$ then $[0,1]^{\lambda}$ is $\kappa$-Corson. 
By Proposition~\ref{fact-uiuio}, $\{0,1\}^\kappa$ cannot be $\kappa$-Corson, as it clearly has no $G_\kappa$-points. Hence, $[0,1]^\kappa$ is not $\kappa$-Corson, being a superspace of $\{0,1\}^\kappa$.

For the ``Moreover'' part, notice that $\kappa+1$ is homeomorphic to the space of all non-increasing functions from $\kappa$ to $\{0,1\}$, hence it is $\kappa$-Valdivia. It is not $\kappa$-Corson, because the maximal element has tightness $\kappa$ in $\kappa+1$.
\end{proof}
A recent note of Plebanek~\cite{Ple} contains a result showing that neither $[0,1]^\kappa$ nor $\{0,1\}^\kappa$ can be $\kappa$-Corson for an arbitrary (possibly singular) infinite cardinal $\kappa$, answering a question of Kalenda~\cite{Ka1}.

\subsection{The Lindel\"of number of spaces of continuous functions}
\label{begin-lindelof-preservation}

Let $\kappa$ be an infinite cardinal and let $E$ be a topological space. 
We say that $E$ is \dfn{$\kappa$-Lindel\"of}{$\kappa$-Lindel\"of} if every open cover of $E$ contains a subcover of cardinality $<\kappa$. 
So the case $\kappa=\aleph_1$ corresponds to the classical notion of  Lindel\"of space. 

Assume that $K$ is $\kappa$-Corson.
Recall that by Theorem~\ref{thm-753} and Lemma~\ref{lemma-uiouoi}, the retraction ${\RR}_M$ maps $K/M$ onto $\cl(K \cap M)$ and that ${\RR}_M$ is a homeomorphism onto.
So ${\RR}_M$  induces a continuous embedding ${\RR}_M^*$ from $C(\cl(K \cap M))$ into $C(K)$.
Note that for a general compact space $K$, we have a canonical embedding of $C(K \by M)$ into $C(K)$, induced by the quotient mapping $\map{q_M}{K}{K \by M}$.

The proof of the following result is a suitable adaptation of the proof of~\cite[\S17.3, Theorem 17.1]{KKLP}.

\begin{tw}   
\label{thm-456789} 
Let $\kappa$ be an uncountable regular cardinal and let $K$ be a $\kappa$-Corson compact space.
Then $C(K)$, endowed with the pointwise topology, is $\kappa$-Lindel\"of. 
\end{tw}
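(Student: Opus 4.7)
The plan is to proceed by transfinite induction on $\mu \eqdef \w(K)$, directly adapting the scheme of \cite[\S17.3, Theorem 17.1]{KKLP} by systematically replacing $\aleph_0$ with $\kappa$. For the base case $\mu \loe \aleph_0$, the space $K$ is compact metrizable, and it is a classical result (going back to Gul'ko) that $C(K)$ with the pointwise topology is Lindel\"of, hence $\kappa$-Lindel\"of since $\kappa > \aleph_0$.

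For the inductive step $\mu > \aleph_0$, assume the statement for $\kappa$-Corson compacta of weight $<\mu$ and fix an open cover $\Yu$ of $C(K)$. Following the setup of the proof of Theorem~\ref{thm-753}, I would take a continuous chain $\sett{M_\al}{1 \loe \al \loe \cf\mu}$ of $\kappa$-stable elementary submodels of a big enough $\pair{H(\theta)}{\in}$ with $K, \Yu \in M_1$, $|M_\al| < \mu$ for $\al < \cf\mu$, and $M_{\cf\mu}$ containing a base of the topology of $K$. Let $K_\al \eqdef \cl(K \cap M_\al)$ and let $\RR_\al \colon K \to K_\al$ be the retractions from Lemma~\ref{lemma-uiouoi}. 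Each $K_\al$ is $\kappa$-Corson (as a continuous image of $K$, by Corollary~\ref{WNsduhsd}) of weight $\loe |M_\al| < \mu$, so by the inductive hypothesis $C(K_\al)$ is $\kappa$-Lindel\"of. The dual map $\RR_\al^*$ is a topological embedding for the pointwise topologies, identifying $C(K_\al)$ with the closed subspace $C_\al \eqdef \setof{f \in C(K)}{f = f \cmp \RR_\al}$ of $C(K)$; hence each $C_\al$ is $\kappa$-Lindel\"of, and elementarity lets us extract $\Yu_\al \subs \Yu \cap M_\al$ of cardinality $<\kappa$ covering $C_\al$.

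The main obstacle is assembling these pieces into a subcover of $C(K)$ of cardinality $<\kappa$. The union $\Yu' \eqdef \bigcup_\al \Yu_\al$ has size at most $\cf\mu \cdot \kappa$, which is $<\kappa$ only when $\cf\mu < \kappa$; when $\cf\mu \goe \kappa$ a further reflection is needed. The crucial technical step, which I expect to be the hardest, is showing that every $f \in C(K)$ is already covered by $\Yu'$ (or by a $<\kappa$-refinement thereof): this should follow from the fact that $f \cmp \RR_\al \to f$ pointwise as $\al$ increases, itself a consequence of the local tightness $<\kappa$ supplied by Proposition~\ref{prop-gsdfjgjhsdf} together with the chain identity $K_\delta = \bigcup_{\xi<\delta}K_\xi$ at limit $\delta$. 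Combining this with the observation that any basic open neighborhood in $C(K)$ is determined by only finitely many points of $K$, each of support size $<\kappa$ inside $\Sigma_\kappa(S)$, one should be able to apply a final $\kappa$-stable elementary submodel argument to $\Yu$ to reflect the witnessing data of any open set in $\Yu$ into a single $M_\al$, producing the required subcover of cardinality $<\kappa$.
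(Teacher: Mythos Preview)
Your approach diverges substantially from the paper's, and the divergence creates a genuine gap that you yourself flag but do not close.

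The paper does \emph{not} induct on $\w(K)$ and does not build a chain of submodels. It takes a \emph{single} $\kappa$-stable $M \rloe H(\theta)$ with $|M|<\kappa$ and $\Vee \in M$ (where $\Vee$ is a basic open cover of $C(K)$) and shows directly that $\Vee \cap M$ covers $C(K)$. The mechanism is: the retraction $\RR_M$ from Lemma~\ref{lemma-uiouoi} gives a projection $\pi\colon f \mapsto f\cmp \RR_M$ whose range is the norm closure of $C(K)\cap M$; given $g$, one finds $h \in C(K)\cap M$ norm-close to $\pi g$, then by elementarity a $V \in \Vee\cap M$ containing a ball around $h$ (hence around $\pi g$). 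Since $V$ is basic and lies in $M$, the finitely many points $x_0,\dots,x_{n-1}$ determining $V$ lie in $K\cap M \subs \cl(K\cap M)$, where $g$ and $\pi g$ agree; therefore $g \in V$. No induction, no chain, no case split on $\cf\mu$.

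Your inductive scheme runs into exactly the problem you identify: when $\cf\mu \goe \kappa$ the union $\bigcup_\al \Yu_\al$ can have size $\goe \kappa$, and your proposed fix---``a final $\kappa$-stable elementary submodel argument''---is not carried out. In fact, if you try to carry it out you will rediscover the paper's argument and see that the entire inductive scaffolding was unnecessary: a single $\kappa$-stable submodel of size $<\kappa$ already reflects both the cover and enough of $K$ (via $\RR_M$) to produce the $<\kappa$ subcover in one step. The chain-of-retractions machinery you import from the proof of Theorem~\ref{thm-753} is tailored to \emph{building} a $T_0$-separating family of order $<\kappa$, which is a genuinely global construction; the Lindel\"of statement is local (one function at a time) and yields to a single reflection.
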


\begin{pf} 
	Recall that the canonical open base of $\pair{C(K)}{\tau_p}$ consists of sets of the form
	$$V = \setof{f \in C(K)}{(\forall\; i < n) \; f(x_i) \in J_i},$$
	where $\ntr$, $x_0, \dots, x_{n-1} \in K$ and $J_0, \dots, J_{n-1}$ are open intervals with rational end-points.
	Let $\Vee$ be a cover of $C(K)$ consisting of sets of the above form.
	Fix a $\kappa$-stable elementary submodel $M$ of a big enough $H(\theta)$ such that $|M| < \kappa$ and $\Vee \in M$ (so, in particular, $K \in M$). 
	It suffices to show that $\Vee \cap M$ covers $C(K)$. 
	
	By Theorem~\ref{thm-753}, $K$ has property $\band{\kappa}$. 
Namely, let $\map {{\RR}_{M}}  K K$ be the retraction induced by $M$. Recall that $\img {{\RR}_{M}}  K = \cl(K \cap M)$. Let $\map{\pi}{C(K)}{C(K)}$ be the dual projection, namely, 
$\pi : f \mapsto f \circ {\RR}_{M}$, i.e. $(\pi f)(x) = f({\RR}_{M} (x))$ for $f \in C(K)$. 	
	
	Note that the image of $\pi $ is $\cl(C(K) \cap M)$, where the closure is in the norm topology. 
	
	Fix $g \in C(K)$.
	Choose a rational $\eps>0$ such that the ball $B(\pi g, \varepsilon)$ is contained in some element of $\Vee$. 
	Since $\pi[C(K)] =\cl(C(K) \cap M)$, choose 
	$h \in C(K) \cap M$ with $\norm{\pi g - h} < \eps/2$.
	Let $B = B(h,\eps/2)$. 
	Since $h, \varepsilon \in M$ we have $B \in M$. 
	 Moreover $B \subs B(\pi g,\eps)$ therefore $B$ is contained in some element of $\Vee$.
	By elementarity, there is $V \in \Vee \cap M$ such that $B \subs V$.
	Assume that 
\smallskip
\newline
\smallskip
\centerline{	
	$V = \setof{f \in C(K)}{(\forall\; i < n) \; ( f(x_i) \in J_i ) } $,
} where $x_0, \dots, x_{n-1} \in K$ and $J_0, \dots, J_{n-1}$ are rational intervals. Note that 
\smallskip
\newline
\smallskip
\centerline{	
$x_0, \dots, x_{n-1} \in M,$
} 
again by elementarity, because
$V \in M$ and $M$ knows that it is a basic set.
	Moreover $\pi g \in B \subseteq V$, which means that $\pi g(x_i) \in J_i$ for $i<n$.
	
	Now note that, given $x \in \cl(K \cap M)$ we have that $(\pi f)(x) = f({\RR}_{M} (x))= f(x)$ for every $f \in C(K)$.
	Finally, by the remark above, $g \in V$, because $g(x_i) = \pi g(x_i)$ for $i<n$.
	We have proved that $\Vee \cap M$ covers $C(K)$. 
\end{pf}

\subsection{Examples of $\kappa$-Corson spaces}
\label{Corson-like-space}

Let $K$ be a compact space and let $D \subs K$.
We define the Alexandrov duplication $\alex(K,D)$ as follows. 
We assume that $(K \times \{0\}) \cap (D \times \{1\}) = \emptyset$. 
We set $\alex(K,D) = (K \times \{0\}) \cup (D \times \{1\})$ 
endowed with the topology for which 
a point of the form $\pair{x}{1}$, with $x \in D$, is isolated 
and a neighborhood of a point of the form $\pair{x}{0}$, with $x \in K$, is of the form 
$\bigl( (V \times 2) \cap \alex(K,D) \bigr) \setminus \{ \pair{x}{1} \}$ 
where $V$ is a neighborhood of $x$ in $K$.

The following result is rather standard. 

\begin{prop}
\label{Prowngog}
Let $K$ be a $\kappa$-Corson compact space and let $D \subs K$.
Then the Alexandrov duplication $\alex(K,D)$ is $\kappa$-Corson.
\end{prop}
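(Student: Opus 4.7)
My plan is to apply the characterization of $\kappa$-Corson spaces from Proposition~\ref{Propeholnjewr}: I will exhibit a $T_0$-separating family $\Wu$ of open $F_\sig$ subsets of $\alex(K,D)$ with $\ord(x,\Wu)<\kappa$ for every point $x$. (Note first that $\alex(K,D)$ is a compact Hausdorff space; this is a standard fact about Alexandrov duplications and uses compactness of $K$.)

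By Proposition~\ref{Propeholnjewr} applied to $K$, fix a family $\Yu$ of open $F_\sig$ subsets of $K$ that is $T_0$-separating, with $\ord(x,\Yu)<\kappa$ for each $x\in K$. For $U\in\Yu$, write $U=\bigcup_n F_n$ with each $F_n$ closed in $K$, and define
$$U' \eqdef (U\times 2)\cap \alex(K,D), \qquad F_n' \eqdef (F_n\times\{0\})\cup((F_n\cap D)\times\{1\}).$$
Then $U'$ is open in $\alex(K,D)$ by definition of the topology. A short check shows that each $F_n'$ is closed: any point $(x,0)\notin F_n'$ has $x\notin F_n$, so $((K\setminus F_n)\times 2\cap\alex(K,D))\setminus\{(x,1)\}$ is a neighborhood of $(x,0)$ disjoint from $F_n'$; and any point $(x,1)\notin F_n'$ is isolated. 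Hence $U'=\bigcup_n F_n'$ is $F_\sig$.

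Now set
$$\Wu \eqdef \setof{U'}{U\in\Yu}\;\cup\;\setof{\{(d,1)\}}{d\in D}.$$
Each singleton $\{(d,1)\}$ is clopen (since $(d,1)$ is isolated), hence open $F_\sig$. For $T_0$-separation: two points $(x,0),(y,0)$ with $x\neq y$ are separated by some $U'$ coming from a $U\in\Yu$ separating $x$ from $y$; any pair involving a point of the form $(d,1)$ is separated by the singleton $\{(d,1)\}$. For the order condition: the point $(x,0)$ belongs to $U'$ iff $x\in U$ and to no singleton, so $\ord((x,0),\Wu)=\ord(x,\Yu)<\kappa$; the point $(d,1)$ belongs to $U'$ iff $d\in U$ and to exactly one singleton, so $\ord((d,1),\Wu)=\ord(d,\Yu)+1<\kappa$ since $\kappa$ is uncountable.

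The only step requiring any care is verifying that $U'$ is $F_\sig$, because sets of isolated points of $\alex(K,D)$ need not be closed in general; the argument above works because the ``fibers'' used are of the form $F_n\cap D$ with $F_n$ already closed in $K$. Everything else is routine and Proposition~\ref{Propeholnjewr} then yields that $\alex(K,D)$ is $\kappa$-Corson.
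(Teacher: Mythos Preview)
Your proof is correct and follows essentially the same approach as the paper: both take a witnessing family $\Yu$ for $K$ from Proposition~\ref{Propeholnjewr}, lift each $U\in\Yu$ to $(U\times 2)\cap\alex(K,D)$, and adjoin the isolated singletons $\{(d,1)\}$ for $d\in D$. The paper leaves the verification that this family has the required properties to the reader; you have supplied exactly those details, including the careful check that the lifted sets are $F_\sigma$.
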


\begin{proof}
Assume that $K$ is $\kappa$-Corson. 
It is easy to check that $\alex(K,D)$ is compact.
Let $\Yu$ be family of open $F_\sig$ subsets of $K$ as in Proposition~\ref{Propeholnjewr}(ii). 
We define a family $\Vee$ of subsets of $\alex(K,D)$ as follows. 
$$\Vee = \setof{ \, \{ \pair{x}{1} \} \,  }{  \,   x \in D \,  } \cup \, 
 \setof{  \,  (V \times 2) \cap \alex(K,D)  \,  }{  \,  V \in \Yu  \,  } \, . $$
We leave to the reader to verify that this set $\Vee$ consists 
of open $F_\sig$ subsets of $\alex(K,D)$ and that $\Vee$
satisfies Proposition~\ref{Propeholnjewr}(ii). 
\end{proof}

Bishop and de Leeuw \cite[\S VII: Examples]{BDL} generalize the above Alexandrov duplication $\alex(K,D)$ in the following way. 
The results in \cite{BDL} are related to Choquet Theorem.

Let $\pair{ Y_x }{ \tau_x }_{ x \in X }$ be a family of pairwise disjoint nonempty topological spaces indexed by a space 
$\pair{ X }{ \tau_X }$, \,$Y \eqdef \bigcup_{x \in X } Y_x$ and 
let $\map{ s }{ X }{ Y }$  be such that $s(x) \in Y_x$ for every $x \in X$. 
We define the \dfn{porcupine space}{porcupine space} 
$\vec{\calY} \eqdef \trpl{ Y }{ \tau }{ s }$.
The universe of $\vec{\calY}$ is $Y = \bigcup_{x \in X } Y_x$. 
We denote by $\map{ \pi }{ Y }{ X }$ the projection, that is $\pi(y) = x$ if and only if $y \in Y_x$. 
So $\pi s (x) = x$ for $x \in X$.
We describe the porcupine topology $\tau$ on $Y$ as follows. 
Let
\begin{itemize}
\item[{\scriptsize\rm$\bullet$}]
$\tau^0$ be the set of all subsets $U$ of $Y$   satisfying: 
there is $x \in X$ such that 
$U \in \tau_x$ and $s(x) \not\in U$, and 
\item[{\scriptsize\rm$\bullet$}] 
$\tau^1$ be the set of all subsets of $Y$ of the form  $\pi^{-1}[V]$ or of the form 
\\
\centerline{
$\pi^{-1}[V {\setminus} \{x\}] \cup U
\eqdef \bigcup \, \setof{ Y_t }{ t \in V \text{ and } t \neq x } \cup U$ 
} 
where 
$V \in \tau_X$\,, \ $x \in V$, and $U \in \tau_x$ satisfies 
$s(x) \in U$. 
\end{itemize}
In other words a member of $\tau^1$ has the form:
$\pi^{-1}[V] \setminus F
\eqdef \bigcup \, \setof{ Y_t }{ t \in V } \setminus F$  
where 
$V \in \tau_X$, and for some $x \in X$ the set $F$ is a closed subset of $Y_x$ satisfying $s(x) \not\in F$. 

The topology $\tau$ on $Y$ having $\tau^0 \cup \tau^1$ as a subbase is called the \dfn{porcupine topology}{porcupine topology} $\tau$ on $Y$. 
Note that if $X$ and each $Y_x$ are $T_0$ (resp. $T_2$) and compact, then $Y$ is $T_0$ (resp. $T_2$) and compact.

\begin{Observation*}
\label{observation-1} 
Assume that $X$ and each $Y_x$ is $T_2$ (and thus $Y$ is $T_2$). 
\begin{itemize}
\item[{\rm(1)}]
The set $Y_x \setminus \{ s(x) \}$ is open in the subspace $Y_x$ and in the subspace $Y \setminus \{ s(x) \}$ for every $x \in X$.
\\ 
Moreover the induced topologies of $\pair{Y_x}{\tau_x}$  and of  $\pair{Y}{\tau}$ on the set $Y_x \setminus \{ s(x) \}$ coincide.
\item[{\rm(2)}] 
The set $s[X]$ is closed in the space $Y$. 
\\ 
Moreover $s$ is a homeomorphism from $X$ onto the subspace $s[X]$ of $Y$. 
\end{itemize}
\end{Observation*}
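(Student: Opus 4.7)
The plan is to unpack the definition of the porcupine topology $\tau$ in terms of its subbase $\tau^0 \cup \tau^1$ and then verify each assertion by a direct case analysis on the subbasic open sets.

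For part (1), I would first note that because $Y_x$ is $T_2$ the singleton $\{s(x)\}$ is closed in $Y_x$, so $Y_x \setminus \{s(x)\}$ is $\tau_x$-open; since it contains no $s(x)$, it belongs to $\tau^0 \subseteq \tau$ outright, establishing that it is open in $Y$ (and hence in $Y \setminus \{s(x)\}$). For the coincidence of the induced topologies on $Y_x \setminus \{s(x)\}$, the easy inclusion is that any $\tau_x$-open $V$ with $s(x) \notin V$ already lies in $\tau^0$, so the $\tau_x$-induced topology is contained in the $\tau$-induced one. For the reverse inclusion I would intersect each subbasic $\tau$-open set $U \in \tau^0 \cup \tau^1$ with $Y_x \setminus \{s(x)\}$ and check case by case: elements of $\tau^0$ are $\tau_{x'}$-open for some $x'$ and restrict either to $\emptyset$ (if $x' \neq x$) or to themselves (if $x' = x$); elements of the form $\pi^{-1}[V]$ restrict to either $\emptyset$ or $Y_x \setminus \{s(x)\}$; and elements of the form $\pi^{-1}[V \setminus \{x'\}] \cup U'$ restrict to $\emptyset$, $Y_x \setminus \{s(x)\}$, or $U' \setminus \{s(x)\}$, all of which are $\tau_x$-open.

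For part (2), the closedness of $s[X]$ is immediate since
\[
Y \setminus s[X] \;=\; \bigcup_{x \in X}\bigl(Y_x \setminus \{s(x)\}\bigr),
\]
and each summand is in $\tau^0$ by what was proved in (1). To see that $s \colon X \to s[X]$ is a homeomorphism, bijectivity is built into the definition of $s$. Continuity of $s$ reduces by subbase considerations to three cases: if $U \in \tau^0$, then $U \subseteq Y_x \setminus \{s(x)\}$ for some $x$, so $s^{-1}(U) = \emptyset$; if $U = \pi^{-1}[V]$, then $s^{-1}(U) = V \in \tau_X$; and if $U = \pi^{-1}[V \setminus \{x\}] \cup U'$ with $s(x) \in U'$, then $s^{-1}(U) = (V \setminus \{x\}) \cup \{x\} = V \in \tau_X$. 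Openness onto the image is even more direct: for $V \in \tau_X$, one has $s[V] = s[X] \cap \pi^{-1}[V]$ and $\pi^{-1}[V] \in \tau^1 \subseteq \tau$.

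I do not expect any genuine obstacle here; the whole statement is a bookkeeping exercise once the subbase description is opened up. The only place where the $T_2$ hypothesis on the fibers $Y_x$ is truly used is to guarantee that $\{s(x)\}$ is $\tau_x$-closed, which both underwrites the openness of $Y_x \setminus \{s(x)\}$ in $\tau^0$ and licenses the description of the induced topology on $Y_x \setminus \{s(x)\}$. The $T_2$ assumption on $X$ plays no role in this particular observation but is used in the accompanying remark about $Y$ being $T_2$, which I would treat as an independent routine check.
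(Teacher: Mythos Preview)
Your argument is correct and follows exactly the approach indicated in the paper's hint: part~(1) hinges on observing that $Y_x \setminus \{s(x)\} \in \tau^0$, and part~(2) on writing $Y \setminus s[X] = \bigcup_{x \in X}(Y_x \setminus \{s(x)\})$ as a union of $\tau^0$-sets. The paper leaves the detailed case analysis and the verification that $s$ is a homeomorphism to the reader, whereas you have filled in those routine checks explicitly.
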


\begin{proof}[Hint]
(1) uses the fact that $Y_x \setminus \{ s(x) \} \in \tau^0$. 

(2) uses the fact that, by Part~(1), $Y \setminus s[X] = \bigcup_x (Y_x  \setminus \{s(x)\})$ is open.  
\end{proof}

\begin{prop}
\label{Prowngog-porcupine} 
Let $\kappa$ be a regular cardinal, 
$\pair{ Y_x }{ \tau_x }_{ x \in X }$ be a family of 
pairwise disjoint $\kappa$-Corson spaces indexed 
by a $\kappa$-Corson space $\pair{ X }{ \tau_X }$ 
and let $\map{ s }{ X }{ Y }$  be such that 
$s(x) \in Y_x$ for every $x \in X$. 
We suppose that the following hold.
\begin{itemize}
\item[{\rm(1)}]
For every $x \in X$, $\{ s(x) \}$ is $G_\delta$ in $Y_x$\,. 
\item[{\rm(2)}]
The set 
$\Sigma_{\vec{\calY}} \eqdef 
\setof{ \, x \in X }{ \text{\rm there is } U \in \calU_x 
\text{\rm\ such that } U \neq Y_x 
\text{\rm\ and } s(x) \in U  \, }$
is of cardinality less than $\kappa$, 
where for every $x \in X$ the family $\calU_x$ of $F_\sigma$ open sets  in $Y_x$ satisfies Proposition~{\rm\ref{Propeholnjewr}(ii)}.
\end{itemize}
Then the porcupine space $\vec{\calY} \eqdef \trpl{ Y }{ \tau }{ s }$ is $\kappa$-Corson. 
\end{prop}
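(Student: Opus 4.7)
The plan is to construct a topological embedding $\Phi : Y \to \Sigma_\kappa(S)$ for a suitable index set $S$, and thereby conclude that $\vec{\calY}$ is $\kappa$-Corson via Definition~\ref{DFkapaCorsn}.

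First, for each $x \in X \setminus \Sigma_{\vec{\calY}}$ I would verify that the sub-family $\calU_x^{\operatorname{off}} \eqdef \{U \in \calU_x : s(x) \notin U\}$ is itself $T_0$-separating on $Y_x$ and still has all points of order less than $\kappa$. Indeed, given distinct $y, y' \in Y_x$ and $U \in \calU_x$ with $|U \cap \{y,y'\}| = 1$, if $s(x) \in U$ then by the definition of $\Sigma_{\vec{\calY}}$ one must have $U = Y_x$, contradicting the $T_0$-separation. Hence $\calU_x^{\operatorname{off}}$ satisfies Proposition~\ref{Propeholnjewr}(ii) on $Y_x$, providing an embedding $i_x : Y_x \to \Sigma_\kappa(S_x)$ with $i_x(s(x)) = 0$, since every coordinate function associated with $\calU_x^{\operatorname{off}}$ vanishes at $s(x)$. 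For $x \in \Sigma_{\vec{\calY}}$ take any embedding $i_x : Y_x \to \Sigma_\kappa(S_x)$, and fix an embedding $i_X : X \to \Sigma_\kappa(S_X)$.

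Setting $S \eqdef S_X \sqcup \bigsqcup_{x \in X} S_x$, define $\Phi : Y \to [0,1]^S$ by stipulating, for $y \in Y_x$, that $\Phi(y)|_{S_X} = i_X(x)$, $\Phi(y)|_{S_x} = i_x(y)$, and $\Phi(y)|_{S_{x'}} = i_{x'}(s(x'))$ whenever $x' \neq x$. Injectivity is immediate: the $S_X$-coordinates determine $\pi(y)$, after which the $S_{\pi(y)}$-coordinates determine $y$. Since $i_{x'}(s(x')) = 0$ for $x' \notin \Sigma_{\vec{\calY}}$, the support of $\Phi(y)$ is contained in $\supp(i_X(\pi(y))) \cup \supp(i_{\pi(y)}(y)) \cup \bigcup_{x' \in \Sigma_{\vec{\calY}} \setminus \{\pi(y)\}} \supp(i_{x'}(s(x')))$, a union of fewer than $\kappa$ sets each of cardinality less than $\kappa$; by regularity of $\kappa$, its cardinality is again less than $\kappa$, so $\Phi(Y) \subseteq \Sigma_\kappa(S)$.

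The delicate step will be continuity of $\Phi$ at the cross-section points $s(x)$. For $\beta \in S_{x'}$ with $x' \neq x$, continuity of $\pi$ together with Hausdorffness of $X$ forces any net $y_\alpha \to s(x)$ to satisfy $\pi(y_\alpha) \neq x'$ eventually, so $\Phi(y_\alpha)(\beta)$ is eventually constantly $i_{x'}(s(x'))(\beta)$. For $\delta \in S_x$, the Observation that the subspace topology on $Y_x$ inherited from $Y$ coincides with $\tau_x$, combined with the fact that for every open $W \ni s(x)$ in $Y_x$ the set $Y \setminus (Y_x \setminus W)$ is an open neighborhood of $s(x)$ in $Y$ (because $Y_x \setminus W$ is closed in the compact space $Y_x$, hence closed in the Hausdorff space $Y$), reduces the check to continuity of $i_x$ at $s(x)$. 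Continuity at points of $Y_x \setminus \{s(x)\}$ is routine via a $\tau^0$-neighborhood contained in $Y_x$. Finally, $Y$ is compact Hausdorff, so the continuous injection $\Phi$ into a Hausdorff space is a topological embedding, establishing the $\kappa$-Corson property for $\vec{\calY}$.
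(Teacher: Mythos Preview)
Your proof is correct and takes a genuinely different route from the paper's. The paper verifies Proposition~\ref{Propeholnjewr}(ii) directly: it builds a $T_0$-separating family $\calV = \calV^0 \cup \calV^1$ of open $F_\sigma$ subsets of $Y$, where $\calV^0$ collects the sets $U \in \calU_x$ with $s(x) \notin U$ and $\calV^1$ consists of sets of the form $\pi^{-1}[W \setminus \{x\}] \cup U$ with $W \in \calU_X$, $x \in W$, $s(x) \in U \in \calU_x$; it then checks separation and the order bound case by case. Your argument instead produces the embedding into $\Sigma_\kappa(S)$ explicitly, stitching together the fibre embeddings $i_x$ along the base embedding $i_X$ and verifying continuity coordinatewise.

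Two small points are worth noting. First, the Observation you invoke only asserts that the induced topologies agree on $Y_x \setminus \{s(x)\}$, not on all of $Y_x$; the stronger statement is true (for $W \in \tau_x$ with $s(x) \in W$, the $\tau^1$-set $\pi^{-1}[X \setminus \{x\}] \cup W$ traces $W$ on $Y_x$), but you should say so rather than cite the Observation. In any case, the argument you actually give---that $Y \setminus (Y_x \setminus W)$ is open in $Y$---already suffices for continuity at $s(x)$, independently of that stronger claim. Second, you do not mention continuity of the $S_X$-coordinates, but this is immediate from $\Phi(\cdot)\rest S_X = i_X \circ \pi$ and continuity of $\pi$.

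What your approach buys is that hypothesis~(1) is never invoked: the paper uses the $G_\delta$ assumption on $\{s(x)\}$ precisely to ensure that the sets in $\calV^1$ are $F_\sigma$ in $Y$, whereas your direct embedding sidesteps the $F_\sigma$ bookkeeping entirely. This shows that (1) is a convenience for the paper's method rather than an essential hypothesis for the conclusion.
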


\begin{proof}
Choose a family $\calU_X$ of $F_\sigma$ open sets in $X$ 
satisfying Proposition~\ref{Propeholnjewr}(ii). 
Recall that $\calU_x$ witnesses the fact that $Y_x$ is $\kappa$-Skula (i.e. $\calU_x$ satisfies Proposition~\ref{Propeholnjewr}(ii)) for $Y_x$.

For every $x \in X$, as above, we denote by 
\begin{itemize}
\item[{\scriptsize\rm$\bullet$}]
$\calV_x^0$ the set of all subsets $V$ of $Y$ such that $V \in \calU_x$ and $s(x) \not\in V$, and by
\item[{\scriptsize\rm$\bullet$}] 
$\calV_x^1$ the set of all subsets of $Y$ of the form 
$\pi^{-1}[W \setminus \{x\}] \cup U$, where 
$W \in \calU_X$\,, \ $x \in W$  and $U \in \calU_x$ satisfies $s(x) \in U$. 
\end{itemize}
We set $\calV^0 = \bigcup_{x \in X} \calV_x^0$, 
\,$\calV^1 = \bigcup_{x \in X} \calV_x^1$ and  
$\calV = \calV^0 \cup \calV^1$. 
Also for $x \in X$ we set $\calV_x = \calV_x^0 \cup \calV_x^1$. 

We prove that $\calV$ satisfies Proposition~\ref{Propeholnjewr}(ii). 
First, using if necessary the fact that $\{ s(x) \}$ is a $G_\delta$ in $Y_x$,  each $V \in \calV$ is an $F_\sig$ in~$Y$. 
Then $\calV$ is $T_0$-separating. 
In order to see this, fix $y \ne y'$ in $Y$. 
If $y, y' \in Y_x \setminus \{ s(x) \}$ then $y$ and $y'$ are separated by a member of $\calV_x^0$.
If $y = s(z)$ and $y' = s(z')$ then $y$ and $y'$ are separated by some $\pi^{-1}[W] \in \calV^1$. 
If $y \in Y_x$ and $y' = s(x)$ then $y$ and $y'$ are separated by some  $\pi^{-1}[W \setminus \{ x \}] \cup U  \in \calV^1$. 

For $y \in Y$, and $\calW \subseteq \calV$, we set $\calW(y) \eqdef \setof{ V \in \calW }{ y \in V }$,  
and thus $\ord(y,\calW)  = | \calW(y) |$.
Since $Y$ is $T_0$ and compact,  
to see that $Y$ is $\kappa$-Corson, it remains to show that 
$ | \calV(y) | < \kappa $ for every $y \in Y$. 
We fix $y \in Y$. 
So $y \in Y_{x(y)}$ for an unique $x(y) \in X$. 
Since $\calV^0(y) = \calV_{x(y)}^0(y)$, we have 
$ | \calV^0(y) | < \kappa$. 

To prove that $ | \calV^1(y) | < \kappa$ we need some notions. 
Any $V \in \calV^1$ has the form 
$$V_{z,W,U} \eqdef \pi^{-1}[W \setminus \{z\}] \cup U,$$
where 
$W \in \calU_X$\,, \ $z \in W$  and $U \in \calU_z$ satisfies $s(z) \in U$. 
By the hypotheses, the sets 
$\setof{ \pi^{-1}[W] }{ z \in W  \in \calU_X }$ and 
$\setof{ U \in \calU_z }{ s(z) \in U \in \tau_x }$ are of cardinality $<\kappa$ and thus   
$$\calV_{z} \eqdef \setof{ V_{z,W,U} \in \calV^1 }{ z \in W \in \calU_X \text{ and } s(z) \in U \in \tau_{z} },$$
is of cardinality $< \kappa$, for any $z \in X$. In particular 
\begin{itemize}
\item[{\rm(i)}]
$\calV_{z}(y) \eqdef \setof{ V_{z,W,U} \in \calV_z }{ y \in V_{z,W,U} }$ 
is of cardinality $< \kappa$ for any $z \in X$ and $y \in Y$. 
\end{itemize}
Recall that $y \in Y_x$. 
\begin{itemize}
\item[{\rm(ii)}]
If $y = s(x)$ then, by~(i), $ \calV^- \eqdef \calV_{x}(y)$
is of cardinality $< \kappa$.
\end{itemize}
Now suppose that $ y \neq s(x) $. 
If $z \in X \setminus \Sigma_{\vec{\calY}}$, 
then $U = Y_z$ and thus $s(z) \in V_{z,W,U} = \pi^{-1}[W \setminus \{z\}] \cup Y_z = \pi^{-1}[W]$. 
Therefore 
\begin{itemize}
\item[{\rm(iii)}]
$\calV^{*} \eqdef \bigcup \setof{  \calV_{z}(y) }
{z \in X {\setminus} \Sigma_{\vec{\calY}} }
= \setof{ \pi^{-1}[W] }{ W \in \calU_X \text{ and } s(y) \in W }$
is of cardinality $< \kappa$ for every $y \neq s(x)$.
\end{itemize}
Since $ | \Sigma_{\vec{\calY}} | < \kappa $, and again by~(ii) for every $z \in \Sigma_{\vec{\calY}}$ we have $ | \calV_z(y) | < \kappa$, and thus 
\begin{itemize}
\item[{\rm(iv)}]
$\calV^{**} \eqdef \bigcup \setof{  \calV_{z}(y) }
{z \in \Sigma_{\vec{\calY}} }$
is of cardinality $< \kappa$  for every $y \neq s(x)$.
\end{itemize}
Since $\calV^1(y) = \calV^- \cup \calV^{*} \cup \calV^{**}$, 
by~(ii)-(iv), $ | \calV^1(y) | < \kappa$ for any $y \in Y$.
\end{proof}

The next result is a re-statement of a result of Bonnet and Rubin in \cite{BR} \S 3.3 (Proposition 3.16).

\begin{lm}
\label{lemma-789456}
Let $\vec{\calY} \eqdef \trpl{ Y }{ \tau }{ s }$ be a porcupine family such that $X$ and each $Y_x$ are compact and $0$-dimensional.
Then $Y$ is compact and $0$-dimensional. 

Moreover if $X$ and each $Y_x$ is scattered then $Y$ is a scattered space.
\qed
\end{lm}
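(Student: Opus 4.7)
The plan is to treat compactness, $0$-dimensionality, and scatteredness separately. Compactness is already asserted in the paragraph following the definition of the porcupine topology (where it is observed that if $X$ and each $Y_x$ are $T_2$ and compact, so is $Y$), so I would simply cite that observation.

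For $0$-dimensionality, the strategy is to exhibit a clopen subbase and use the fact that finite intersections of clopen sets are clopen. Using $0$-dimensionality of $X$ and of each $Y_x$, one restricts the subbasic families $\tau^0$ and $\tau^1$ to those where $V \subs X$ is clopen in $X$ and $U \subs Y_x$ is clopen in $Y_x$. Three kinds of subbasic set must be checked. First, $\pi^{-1}[V]$ with $V$ clopen has complement $\pi^{-1}[X \setminus V]$, which is open of the same form. Second, a set $U \in \tau^0$ with $U$ clopen in $Y_x$ and $s(x) \notin U$ has complement $\pi^{-1}[X \setminus \{x\}] \cup (Y_x \setminus U)$; taking $V' \eqdef X$ and $U' \eqdef Y_x \setminus U$ (clopen in $Y_x$, containing $s(x)$), this complement is precisely a member of $\tau^1$. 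Third, a set $\pi^{-1}[V \setminus \{x\}] \cup U$ with $V$ clopen, $U$ clopen in $Y_x$, and $s(x) \in U$ has complement $\pi^{-1}[X \setminus V] \cup (Y_x \setminus U)$, which is the union of a set in $\tau^1$ and a set in $\tau^0$, hence open. So each subbasic set is clopen, and a clopen base follows.

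For scatteredness, assume additionally that $X$ and each $Y_x$ is scattered, and let $A \subs Y$ be nonempty. First, using scatteredness of $X$, pick $x_0 \in \pi[A]$ isolated in $\pi[A]$, witnessed by an open $V \subs X$ with $V \cap \pi[A] = \{x_0\}$. Next, set $A_0 \eqdef A \cap Y_{x_0}$, which is nonempty; using scatteredness of $Y_{x_0}$, pick $a_0 \in A_0$ isolated in $A_0$, witnessed by some $U_0 \in \tau_{x_0}$ with $U_0 \cap A_0 = \{a_0\}$. I then claim that $a_0$ is isolated in $A$ via the subspace topology inherited from $\tau$, by a case split. If $a_0 = s(x_0)$, the witness is $W \eqdef \pi^{-1}[V \setminus \{x_0\}] \cup U_0 \in \tau^1$: the first piece misses $A$ because $V \cap \pi[A] = \{x_0\}$, while $U_0 \cap A = U_0 \cap A_0 = \{a_0\}$ since $U_0 \subs Y_{x_0}$. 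If $a_0 \ne s(x_0)$, invoke $0$-dimensionality of $Y_{x_0}$ to choose a clopen set in $Y_{x_0}$ that contains $a_0$ but excludes $s(x_0)$, and intersect it with $U_0$; the result is a set $W \in \tau^0$ contained in $Y_{x_0}$ with $W \cap A = \{a_0\}$.

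The main obstacle is the interaction between the porcupine topology on $Y$ and the intrinsic topology on each $Y_x$ at the distinguished point $s(x)$; this asymmetry forces the case split in the scatteredness argument and is precisely why $\tau^1$ has its two-part definition. Once the case split is made, the rest is routine verification directly against the subbase of $\tau$.
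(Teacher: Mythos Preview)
The paper does not actually prove this lemma: it introduces it as ``a re-statement of a result of Bonnet and Rubin in~\cite{BR} \S3.3 (Proposition 3.16)'' and closes it with a bare \qed. Your direct argument is correct and fills in what the paper outsources.

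One point worth making explicit in the $0$-dimensionality step: you tacitly use that the \emph{restricted} subbase (clopen $V \subs X$, clopen $U \subs Y_x$) still generates the porcupine topology $\tau$. This holds because clopen sets form bases in $X$ and in each $Y_x$; a general $\tau^0$ set or a set $\pi^{-1}[V]$ decomposes trivially, and a set $\pi^{-1}[V\setminus\{x\}]\cup U$ decomposes as a union of restricted $\tau^1$ sets $\pi^{-1}[V_i\setminus\{x\}]\cup U_j$ (for $x\in V_i$, $s(x)\in U_j$) together with restricted $\tau^1$ sets $\pi^{-1}[V_i]$ (for $x\notin V_i$) and restricted $\tau^0$ sets $U_j$ (for $s(x)\notin U_j$). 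In the scatteredness argument, Case~2, you invoke $0$-dimensionality of $Y_{x_0}$ to separate $a_0$ from $s(x_0)$, but Hausdorffness already suffices there; since $0$-dimensionality is part of the standing hypotheses this is harmless.
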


\section{A characterization of $\kappa$-Corson Boolean algebras}
\label{Corson-like-BA}

We follow Koppelberg~\cite{Ko} for Boolean algebraic notations. 
In a Boolean algebra 
$\Be$, $\, +$ and $\cdot$   
denote the join and the meet, and ${-}x$ is the complement of $x$ in $B$. 
Also for $x,y \in \natB$, $x \cdot -y$ is denoted by $x-y$, and  $x  \symdiff  y$ 
denotes the symmetric difference, that is $(x-y)+(y-x)$.
For a Boolean algebra $\natB$ we denote by \dfn{$\ult(\natB)$}{$\ult(\natB)$: set of ultrafilters} the set of ultrafilters of $\natB$ (with the pointwise topology). 
Also if $X$ is a 0-dimensional compact space, we denote by \dfn{$\clop(X)$}{$\clop(X)$ set of clopen sets} 
the set of clopen subsets of $X$, which is a Boolean algebra.
For a set $X$, we denote by \dfn{$\Fr(X)$}{$\Fr(X)$: free Boolean algebra} the \dfn{free Boolean algebra}{free Boolean algebra} over $X$. 
For interval algebras, we refer to Koppelberg~\cite[\S15]{Ko}.
For poset Boolean algebras, we refer to Abraham, Bonnet, Kubi\'s and Rubin \cite{ABKR}.

In a $0$-dimensional compact space, for a family of clopen sets: the property of being $T_0$ separating is equivalent to being a generating set in the algebra of all clopen sets. 
So another formulation of Proposition \ref{Propeholnjewr} is:

\begin{wn}
\label{cor-generation}
Let $\kappa > \aleph_0$ be a cardinal and $K$ be a $0$-dimensional compact space. 
Denote by $\Be$ the Boolean algebra of clopen subsets of $K$.
The following are equivalent.

\begin{enumerate}
\item[{\rm(i)}]
The space $K$ is $\kappa$-Corson.
\item[{\rm(ii)}] 
The Boolean algebra $\Be$  is $\kappa$-Corson.
\qed
\end{enumerate}
\end{wn}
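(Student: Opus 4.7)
The plan is to reduce the statement to the 0-dimensional case of Proposition~\ref{Propeholnjewr} via Stone duality. The dictionary is: points of $K$ correspond to ultrafilters $p \in \ult(\Be)$, clopen subsets of $K$ correspond to elements of $\Be$, and under this correspondence $x \in U$ translates to ``the element of $\Be$ coded by $U$ belongs to $p$''. Via this, a family $\Yu$ of clopen subsets of $K$ corresponds to a subset $G \subs \Be$, and the two key quantities match up cleanly:
\begin{itemize}
\item $\ord(x,\Yu) = |\{U \in \Yu : x \in U\}| = |G \cap p|$ for $x \leftrightarrow p$;
\item $\Yu$ is $T_0$-separating on $K$ if and only if $G$ generates $\Be$, which is the paragraph preceding the statement.
\end{itemize}

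Granting this dictionary, the proof is immediate. For (i)$\implies$(ii), apply the 0-dimensional part of Proposition~\ref{Propeholnjewr} to get a $T_0$-separating family $\Yu$ of clopen sets with $\ord(x,\Yu) < \kappa$ for all $x \in K$; the corresponding $G \subs \Be$ then generates $\Be$ and satisfies $|G \cap p| < \kappa$ for every $p \in \ult(\Be)$, so $\Be$ is $\kappa$-Corson. Conversely, if $G \subs \Be$ witnesses that $\Be$ is $\kappa$-Corson, then its image $\Yu$ in $\clop(K)$ is a $T_0$-separating family of clopen (hence open $F_\sigma$) subsets of $K$ with $\ord(x,\Yu) < \kappa$ for every $x \in K$, so by Proposition~\ref{Propeholnjewr} the space $K$ is $\kappa$-Corson.

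There is no real obstacle here: the argument is purely a translation through Stone duality, and the only nontrivial ingredient (the $T_0$-separation $\Longleftrightarrow$ generation equivalence) is precisely the standing remark just above the statement. In particular, one does not need any assumption on $\kappa$ beyond $\kappa > \aleph_0$, since the cardinal constraint appears identically on both sides of the equivalence.
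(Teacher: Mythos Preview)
Your proof is correct and matches the paper's approach exactly: the paper treats this corollary as an immediate reformulation of Proposition~\ref{Propeholnjewr} (indeed, the equivalence is already stated at the end of that proposition), using precisely the remark preceding the corollary that a family of clopen sets is $T_0$-separating if and only if it generates the clopen algebra. Your explicit unpacking of the Stone-duality dictionary is a faithful expansion of what the paper leaves implicit.
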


Let $K$ be a topological space and $A \subseteq K$.
We denote by $\cl(A)$ the topological closure of $A$ in $K$.
For $A \subseteq \ult( \Be)$, recall that $p \in \cl (A) \subs \ult( \Be)$ if and only if for every $a \in p$ there is $x\in A$ with $a \in x$.
In other words, for a set $A \subs \ult (\Be)$, we have 
\begin{equation}
\cl (A) = \setof{\, p\in \ult (\Be)}{p \subs \bigcup A \,}.
\tag{$\star$}\label{star}
\end{equation}
We shall use this formula several times later.

Now fix a Boolean algebra $\Be$ and fix a big enough regular cardinal $\theta$ so that $\Be \in H(\theta)$.
Furthermore, fix an elementary submodel $M$ of $\pair {H(\theta)}\in$ such that $\Be \in M$.
It is obvious that $\Be \cap M$ is a subalgebra of $\Be$.
It turns out that in case $\Be$ is $\kappa$-Corson and $M$ is sufficiently closed, the embedding $\Be \cap M \subs \Be$ has a special property. 

Namely, let $K = \ult (\Be)$. 
Then the relation 
$p \subs \bigcup (K \cap M)$
is equivalent to $p \in \cl (K\cap M)$.
Hence the natural equivalence relation $\sim_M$ 
on $\cl (K\cap M) \subseteq K$ says that 
$p \sim_M q$ if and only if there is no $a \in \Be \cap M$ such that $a \in (p \setminus q) \cup (q \setminus p)$.

\begin{prop}
\label{Pnowdela}
Let $\kappa$ be a regular cardinal, let $\Be$ be a 
$\kappa$-Corson Boolean algebra, and let $M$ be an elementary $\kappa$-stable submodel of a big enough $H(\theta)$ such that $\Be \in M$. 
Then for every $p, q \in \ult (\Be)$ such that 
$$p , q \subs \bigcup \, \bigl( \, \ult (\Be) \cap M \, \bigr)$$ 
if $p \cap M = q \cap M$ then $p = q$. 
\qed
\end{prop}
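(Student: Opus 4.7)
The plan is to translate the statement into its topological counterpart and invoke Property $\band{\kappa}$. Let $K = \ult(\Be)$. By the last sentence of Proposition~\ref{Propeholnjewr}, the hypothesis that $\Be$ is $\kappa$-Corson gives that $K$ is $\kappa$-Corson, and then Lemma~\ref{LMerighuerghvo} tells us that $K$ has Property $\band{\kappa}$. Since $\Be \in M$ we also have $K \in M$ (by elementarity, the Stone space of $\Be$ is defined from $\Be$), so Definition~\ref{dfn-bk} applies: the equivalence relation $\sim_M$ is trivial on $\cl(K \cap M)$.

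Next, using formula (\ref{star}), the hypothesis $p, q \subs \bigcup(\ult(\Be) \cap M)$ is nothing but $p, q \in \cl(K \cap M)$. I would then argue by contrapositive. Suppose $p \neq q$. Since $p, q \in \cl(K \cap M)$ and $\sim_M$ is one-to-one on this set, we have $p \not\sim_M q$. Applying the 0-dimensional version of Lemma~\ref{Lmeruvggh} with $A = \{p\}$ and $B = \{q\}$, we obtain a clopen set $U \in M$ with $p \notin U$ and $q \in U$.

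Finally, I would translate this back into the Boolean-algebraic language: by elementarity, the clopen set $U$ corresponds to an element $a \in \Be \cap M$ (namely, $U = \setof{x \in K}{a \in x}$), with $a \in q$ but $a \notin p$. Hence $a \in (q \cap M) \setminus (p \cap M)$, showing $p \cap M \neq q \cap M$, as required. I do not foresee a real obstacle here, as everything reduces to unpacking the definitions: the only genuine content is Property $\band{\kappa}$ for $K$, which is already established. In particular, note that the assumption ``$p \cap M = q \cap M$'' is exactly the algebraic reformulation of $p \sim_M q$ in the 0-dimensional setting, since the characteristic functions $\chi_a$ with $a \in \Be \cap M$ already witness (and together with uniform approximation, generate) all relevant members of $C(K) \cap M$.
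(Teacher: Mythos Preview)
Your route is sound in outline but has a gap: Definition~\ref{dfn-bk} of Property $\band{\kappa}$ requires $|M| < \kappa$, while Proposition~\ref{Pnowdela} places no such restriction on $M$, so you cannot simply say ``Definition~\ref{dfn-bk} applies''. The repair is easy: the \emph{proof} of Lemma~\ref{LMerighuerghvo} (via Proposition~\ref{PROPsidvod}, which likewise has no size hypothesis on $M$) actually shows that $\sim_M$ is one-to-one on $\cl(K\cap M)$ for every $\kappa$-stable $M$ with $K,S\in M$, not only those of cardinality $<\kappa$; you should cite that argument rather than the packaged property.

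Once this is fixed, your approach is correct but genuinely different from the paper's. The paper stays on the Boolean side throughout: it takes a generating set $G\in M$ witnessing that $\Be$ is $\kappa$-Corson, picks $a\in (p\cap G)\setminus q$, uses the hypothesis $p\subs\bigcup(\ult(\Be)\cap M)$ to find $r\in\ult(\Be)\cap M$ with $a\in r$, and then invokes $\kappa$-stability (Proposition~\ref{PROPsidvod}) to get $r\cap G\subs M$, hence $a\in M$. This is short, self-contained, and makes transparent exactly which piece of the $\kappa$-Corson hypothesis is used. Your approach instead passes through the topological machinery of \S\ref{Corson-like} and Lemma~\ref{Lmeruvggh}; what it buys you is the observation that Proposition~\ref{Pnowdela} is really the Boolean restatement of Lemma~\ref{LMerighuerghvo}, at the cost of the detour through $C(K)$.
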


\begin{pf} 
Let $G$ be a generating set witnessing that $\Be$ is $\kappa$-Corson.
By elementarity, we may assume that $G \in M$. 

Suppose $p \ne q$ and 
$p\cup q \subs \bigcup \bigl(\ult (\Be) \cap M \bigr)$, 
that is  $p, q \in \cl (\ult (\Be) \cap M)$. 
Then $p\cap G \ne q\cap G$, because $G$ generates $\Be$.
Choose $a \in p \cap G \setminus q$ (if necessary, we switch $p$ and $q$).
By the assumption, 
$a \in r$ for some $r \in \ult (\Be) \cap M$.
Recall that $|r \cap G| = \lam$ for some $\lam < \kappa$ and by elementarity and the facts that $r, G \in M$,  we conclude that  $\lambda \in \kappa \cap M$ and thus, by $\kappa$-stability, $\lam + 1 \subs M$.
Thus, $r \cap G \subs M$, because $r, \lam, G \in M$.
It follows that $a\in M$ and hence $p \cap M \ne q \cap M$.
\end{pf}

Let $\Be$ be a Boolean algebra and let $M$ be an elementary $\kappa$-stable submodel of some $H(\theta)$.
Motivated by Proposition~\ref{Pnowdela}, we shall say that $M$ is \dfn{$\Be$-good}{$\Be$} if for every $p,q \in \ult(\Be)$ such that $p \ne q$ and $p \cup q \subs \bigcup (\ult(\Be) \cap M)$, it holds that $p \cap M \ne q \cap M$.

\medskip

We shall say that a Boolean algebra $\Be$ has \dfn{property $\band \kappa$}{property $\band \kappa$} if every elementary $\kappa$-stable submodel $M$ of a big enough $H(\theta)$ satisfying $\Be \in M$ and $|M| <\kappa$ is $\Be$-good. 

\medskip

It is rather clear that this property translates to the already defined property $\band{\kappa}$ for compact spaces, namely:

\begin{prop}\label{PropJgweogn}
Let $\kappa$ be an uncountable regular cardinal.
	A Boolean algebra has property $\band{\kappa}$ if and only if its Stone space has property $\band{\kappa}$. 
\end{prop}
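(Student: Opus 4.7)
The plan is to establish the proposition by reducing it to a pointwise equivalence: for every $\kappa$-stable elementary submodel $M$ of a sufficiently large $H(\theta)$, with $\Be \in M$ and $|M| < \kappa$, the submodel $M$ is $\Be$-good if and only if $\sim_M$ is one-to-one on $\cl(K \cap M)$, where $K := \ult(\Be)$. Note that this makes sense because $K \in M \iff \Be \in M$ by elementarity (each is definable from the other), so the two definitions of $\band{\kappa}$ quantify over exactly the same submodels $M$.

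The proof reduces to two translations between the Boolean and topological formalisms. First, formula $(\star)$ yields $\cl(K \cap M) = \setof{p \in K}{p \subs \bigcup(\ult(\Be) \cap M)}$, so the ``ambient set'' on which one tests injectivity of $\sim_M$ is exactly the set on which $\Be$-goodness is required. Second, for $p,q \in K$ one has
\[
p \sim_M q \iff p \cap M = q \cap M.
\]
With these two facts in hand, ``$\sim_M$ is one-to-one on $\cl(K \cap M)$'' unfolds verbatim to ``for any $p \ne q$ in $K$ with $p \cup q \subs \bigcup(\ult(\Be) \cap M)$, one has $p \cap M \ne q \cap M$'', i.e.\ the $\Be$-goodness of $M$.

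The main step requiring care is the displayed equivalence, which is where $0$-dimensionality enters. The direction $(\Rightarrow)$ is routine: for each $a \in \Be \cap M$, the basic clopen $[a] = \setof{r \in K}{a \in r}$ lies in $M$ by elementarity, so its characteristic function belongs to $C(K) \cap M$, hence $p \sim_M q$ forces $a \in p \iff a \in q$. For $(\Leftarrow)$, suppose some $f \in C(K) \cap M$ satisfies $f(p) \ne f(q)$, and choose rationals $r_1 < r_2$ lying strictly between the two values. Since $K$ is compact $0$-dimensional, the disjoint closed sets $f^{-1}(-\infty,r_1]$ and $f^{-1}[r_2,\infty)$ can be separated by a clopen set; this statement has all its parameters ($f,r_1,r_2$) in $M$, so by elementarity such a clopen $U \in M$ exists. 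Writing $U = [a]$ via Stone duality gives $a \in \Be \cap M$ with $a \in p \symdiff q$, so $p \cap M \ne q \cap M$.

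Once the two translations are in place, combining them yields the proposition directly; no further argument is required. I expect the only subtle point is checking that the separating clopen produced in the $(\Leftarrow)$ direction can indeed be chosen inside $M$, which is immediate by elementarity once one observes that the existence of such a clopen is a first-order statement about $K$ with parameters $f,r_1,r_2 \in M$. Alternatively, one could invoke the $0$-dimensional version of Lemma~\ref{Lmeruvggh} applied to the closed sets $\{p\}$ and $\{q\}$ once one knows $p \not\sim_M q$, which gives the same conclusion.
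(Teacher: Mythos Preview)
Your proposal is correct and follows essentially the same route as the paper: both reduce the statement to the pointwise equivalence ``$M$ is $\Be$-good iff $\sim_M$ is one-to-one on $\cl(K\cap M)$'', which in turn rests on the identification $p\sim_M q \iff p\cap M = q\cap M$. The only difference is presentational: the paper dispatches this last equivalence in a single line by invoking the $0$-dimensional case of Lemma~\ref{Lmeruvggh}, whereas you unpack the argument explicitly (and then mention Lemma~\ref{Lmeruvggh} as an alternative).
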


\begin{proof}
	It is enough to note that $M$ is $\Be$-good if and only if the equivalence relation $\sim_M$ is one-to-one on $\cl(K \cap M)$, where $K$ is the Stone space of $\Be$, taking into account that $x \not \sim_M y$ iff there is a clopen set in $K \cap M$ separating $x$ and $y$ (see Lemma~\ref{Lmeruvggh}). 
\end{proof}

Theorem~\ref{thm-753} combined with Proposition~\ref{PropJgweogn} gives:

\begin{tw}\label{Thmonbandlow}
Let $\kappa$ be an uncountable regular cardinal.
A Boolean algebra is $\kappa$-Corson if and only if it has Property $\band{\kappa}$.
\qed
\end{tw}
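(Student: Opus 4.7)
The theorem is essentially a corollary obtained by chaining three equivalences that have already been established in the paper. The plan is to set up a short diagram of iff-statements connecting the algebraic and topological versions of both properties.

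First, I would invoke Corollary~\ref{cor-generation}, which states that a Boolean algebra $\Be$ is $\kappa$-Corson if and only if its Stone space $\ult(\Be)$ is a $\kappa$-Corson compact space. This transfers the algebraic $\kappa$-Corson property to the topological $\kappa$-Corson property on the Stone dual. Next, I would invoke Proposition~\ref{PropJgweogn}, which states that $\Be$ has Property $\band{\kappa}$ if and only if $\ult(\Be)$ has Property $\band{\kappa}$ as a compact space. The only thing to check there (and it is already done in that proposition) is that $M$ is $\Be$-good if and only if $\sim_M$ is one-to-one on $\cl(\ult(\Be) \cap M)$, using Lemma~\ref{Lmeruvggh} to find a clopen separator in $M$ whenever two classes are distinct.

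Finally, Theorem~\ref{thm-753} applied to the compact space $K = \ult(\Be)$ gives $K$ is $\kappa$-Corson if and only if $K$ has Property $\band{\kappa}$. Stringing the three equivalences together yields the desired statement. Explicitly, $\Be$ is $\kappa$-Corson $\iff$ $\ult(\Be)$ is $\kappa$-Corson (by Corollary~\ref{cor-generation}) $\iff$ $\ult(\Be)$ has Property $\band{\kappa}$ (by Theorem~\ref{thm-753}) $\iff$ $\Be$ has Property $\band{\kappa}$ (by Proposition~\ref{PropJgweogn}).

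Since every ingredient is already available, there is no real obstacle at this stage of the paper. The genuine work sits upstream, in the Bandlow-style characterization of Theorem~\ref{thm-753} (which requires the inductive retraction construction through a continuous chain of $\kappa$-stable submodels) and in the use of Lemma~\ref{Lmeruvggh} to make the translation between ``$\sim_M$ is one-to-one on $\cl(K \cap M)$'' and the purely algebraic ``$M$ is $\Be$-good'' condition. Given these, the proof of Theorem~\ref{Thmonbandlow} is a one-line citation chain and can be presented simply by combining the three results.
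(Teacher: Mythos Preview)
Your proposal is correct and matches the paper's own argument: the paper states the theorem as an immediate consequence of Theorem~\ref{thm-753} combined with Proposition~\ref{PropJgweogn}, with the equivalence between $\Be$ being $\kappa$-Corson and $\ult(\Be)$ being $\kappa$-Corson (your use of Corollary~\ref{cor-generation}) taken for granted. There is nothing to add.
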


As a consequence of Corollary~\ref{cor-generation} and of the above result~\ref{Thmonbandlow} we have.

\begin{tw}\label{Thmjgwrgo}
Let $\kappa$ be an uncountable regular cardinal and assume $\Be$ is a $\kappa$-Corson Boolean algebra.
Then every subalgebra of $\Be$ is $\kappa$-Corson.
\qed
\end{tw}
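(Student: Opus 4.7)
The plan is to use Stone duality to reduce the statement about subalgebras to the dual topological statement about continuous surjections of Stone spaces, where we have already established the relevant preservation property.

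First I would observe that if $\Aaa$ is a subalgebra of $\Be$, then the inclusion $\Aaa \hookrightarrow \Be$ dualizes under Stone duality to a continuous surjection $\map{\phi}{\ult(\Be)}{\ult(\Aaa)}$, defined by $\phi(p) = p \cap \Aaa$ (which is the unique ultrafilter of $\Aaa$ extended by $p$, and surjectivity follows from the fact that every ultrafilter of $\Aaa$ extends to some ultrafilter of $\Be$).

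Next I would invoke Corollary~\ref{cor-generation}: since $\Be$ is $\kappa$-Corson, its Stone space $\ult(\Be)$ is $\kappa$-Corson. By Corollary~\ref{WNsduhsd}, continuous images of $\kappa$-Corson compact spaces are $\kappa$-Corson, so $\ult(\Aaa) = \img{\phi}{\ult(\Be)}$ is $\kappa$-Corson. Applying Corollary~\ref{cor-generation} once more in the opposite direction, we conclude that $\Aaa$ itself is $\kappa$-Corson, completing the proof.

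There is no real obstacle here: all the work has been done in establishing the Bandlow-type characterization (Theorem~\ref{thm-753}) and the preservation of Property $\band{\kappa}$ under continuous images (Lemma~\ref{LMnioheboier}). The present statement is simply the Boolean-algebraic shadow of Corollary~\ref{WNsduhsd}, obtained by reversing arrows under Stone duality. One could alternatively give a purely algebraic proof by directly verifying Property $\band{\kappa}$ for $\Aaa$ from that of $\Be$: given a $\kappa$-stable $M \rloe H(\theta)$ with $\Aaa \in M$ and $|M| < \kappa$, choose $M' \supseteq M$ that is also $\kappa$-stable with $\Be \in M'$, and push the goodness of $M'$ for $\Be$ down to goodness of $M$ for $\Aaa$ via the projection of ultrafilters; but the topological route via Corollaries~\ref{cor-generation} and~\ref{WNsduhsd} is the cleanest and is essentially immediate from the results already proved.
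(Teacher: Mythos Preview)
Your proposal is correct and matches the paper's approach: the theorem is stated with a \qed\ immediately after the sentence ``As a consequence of Corollary~\ref{cor-generation} and of the above result~\ref{Thmonbandlow} we have,'' so the intended argument is precisely the Stone-duality reduction you describe (subalgebra $\leftrightarrow$ continuous surjection of Stone spaces, then apply Corollary~\ref{WNsduhsd}, which is itself the combination of Theorem~\ref{thm-753}/\ref{Thmonbandlow} with Lemma~\ref{LMnioheboier}). Your aside about a direct algebraic verification of $\band{\kappa}$ is a reasonable alternative but unnecessary here.
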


\subsection{Remarks on Valdivia algebras}
\label{Valdivia}

We have seen already that a $\kappa$-Corson Boolean algebra has local tightness $<\kappa$, whenever $\kappa$ is a regular cardinal (Proposition~\ref{prop-gsdfjgjhsdf}).
This actually can be proved directly, showing that $\tight(p, A) < \kappa$ whenever $p \in \cl (A) \subs \Sigma_\kappa(S)$ with $S$ arbitrary.
In fact, it is easy to check that $p \in \cl(M\cap A)$ whenever $M$ is a $\kappa$-stable elementary submodel of a big enough $H(\theta)$ such that $p, A \in M$.
It turns out that local tightness distinguishes the $\kappa$-Valdivia algebras from the $\kappa$-Corson ones.

\begin{tw}
\label{Thmweteig}
\label{thm-3.5}
Let $\kappa = \cf \kappa > \aleph_0$ and assume $\Be$ is a $\kappa$-Valdivia Boolean algebra.
Then $\Be$ is $\kappa$-Corson 
if and only if $\ult{(\Be)}$ has local tightness $< \kappa$.

On the other hand, if $\Be$ is $\kappa$-Valdivia but not $\kappa$-Corson, then the interval algebra $\Be(\kappa)$ is a homomorphic image of $\Be$.
\end{tw}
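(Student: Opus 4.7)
For the first equivalence, I reduce to Theorem~\ref{Thmweteig7878} applied to $K \eqdef \ult(\Be)$. Under the Stone embedding of $K$ into $2^G$ induced by a generating set $G$, the properties of $\Be$ being $\kappa$-Valdivia and $\kappa$-Corson translate exactly to the same properties of $K$ (the latter via Corollary~\ref{cor-generation}). Since $K$ is trivially a continuous image of itself, Theorem~\ref{Thmweteig7878} yields: $K$ is $\kappa$-Corson if and only if $K$ has local tightness $<\kappa$, which is the claimed equivalence for $\Be$.

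For the second assertion, assume $\Be$ is $\kappa$-Valdivia but not $\kappa$-Corson. By Stone duality, producing a surjective homomorphism $\Be \twoheadrightarrow \Be(\kappa)$ amounts to exhibiting a closed subspace of $K$ homeomorphic to $\ult(\Be(\kappa))$. Fix a generating set $G$ witnessing the Valdivia property, realize $K \subs 2^G$ so that $D \eqdef K \cap \Sigma_\kappa(G)$ is dense, and pick $p \in K \setminus D$, so that $|p \cap G| \geq \kappa$. The plan is to construct, by transfinite recursion on $\alpha < \kappa$, an injective sequence $(t_\alpha)_{\alpha < \kappa}$ in $p \cap G$ together with ultrafilters $v_\alpha \in D$ satisfying $t_\beta \in v_\alpha$ for all $\beta \leq \alpha$ and $t_\alpha \notin v_\gamma$ for all $\gamma < \alpha$. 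Then $F \eqdef \cl\{v_\alpha : \alpha < \kappa\}$ should be the desired copy of $\ult(\Be(\kappa))$: each $v_\alpha$ is isolated in $F$ (the clopen set determined by $t_\alpha$ together with the complement of the clopen set determined by $t_{\alpha+1}$ isolates it), while the new accumulation points correspond to the left limits of $\ult(\Be(\kappa))$ at limit ordinals, with $p$ itself serving as the topmost one.

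The choice of $t_\alpha$ at each stage is feasible because, by the induction hypothesis, $|v_\gamma \cap G| < \kappa$ for every $\gamma < \alpha$, so by the regularity of $\kappa$ the set $\bigcup_{\gamma<\alpha}(v_\gamma \cap G)$ has cardinality $<\kappa$, while $|p \cap G| \geq \kappa$. The main obstacle is the parallel selection of $v_\alpha$: the closed set $\bigcap_{\beta \leq \alpha}\{x \in K : t_\beta \in x\}$ may fail to meet $D$, so the existence of $v_\alpha \in D$ with $\{t_\beta : \beta \leq \alpha\} \subs v_\alpha$ is not automatic. I would address this by constructing $(t_\alpha)$ along an increasing chain of $\kappa$-stable elementary submodels $(M_\alpha)_{\alpha < \kappa}$ of a large enough $H(\theta)$, applying Lemma~\ref{lemma-uiouoi} to obtain retractions $\RR_{M_\alpha} \colon K \to K$ whose images provide candidates for $v_\alpha$, and using the density of $D$ in the clopen neighborhoods of $p$ together with an argument analogous to Proposition~\ref{Pnowdela} to show that $F$ carries exactly the prescribed topology rather than spurious extra accumulation points.
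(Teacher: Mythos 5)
Your first paragraph is correct and is exactly the paper's argument: the equivalence reduces via Corollary~\ref{cor-generation} to Theorem~\ref{Thmweteig7878} applied to $K=\ult(\Be)$, viewed as a continuous image of itself.

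The second part has a genuine gap, in two places. First, your plan to obtain the points $v_\alpha$ from the retractions of Lemma~\ref{lemma-uiouoi} cannot work as stated: that lemma assumes Property $\band{\kappa}$, and by Theorem~\ref{thm-753} the space $K$ fails Property $\band{\kappa}$ precisely because we are in the case where $\Be$ is not $\kappa$-Corson. Second, and more seriously, even if the recursion producing $(t_\alpha,v_\alpha)$ could be completed, what you obtain is a free sequence of length $\kappa$, and its closure $F=\cl\{v_\alpha:\alpha<\kappa\}$ need not be homeomorphic to $\kappa+1$: at a limit ordinal $\delta$ the set of accumulation points of $\{v_\gamma:\gamma<\delta\}$ inside $F$ can be large, so your ``left limits'' need not be single points. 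Your data does yield a continuous surjection of $F$ onto $\kappa+1$ (send $x$ to the order type of the initial segment $\{\alpha: t_\alpha\in x\}$), but that dualizes to an embedding of $\Be(\kappa)$ \emph{into} a homomorphic image of $\Be$, not to $\Be(\kappa)$ \emph{being} a homomorphic image; compare the fact that $\omega+1$ is a continuous image of $\beta\omega$ yet does not embed into it. To conclude the theorem you need an honest closed copy of $\kappa+1$ in $K$. The paper gets one by a different device: it fixes a single point $x\in K$ with $|\supp(x)|=\lambda\ge\kappa$ and a continuous chain of $\kappa$-stable submodels $M_\alpha$, and considers the restrictions $r_\alpha(x)$ of $x$ to $S\cap M_\alpha$ (padded by zeros). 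These form a chain under the coordinatewise order, so at limit stages the sequence genuinely converges to the union, and the family $\{r_\alpha(x)\}$ together with its limit is a copy of $\lambda+1$, hence contains $\kappa+1$. The only nontrivial step --- that each $r_\alpha(x)$ lies in $K$ --- is proved from the density of $D$, elementarity, and $\kappa$-stability (elements of $D\cap M_\alpha$ have support inside $M_\alpha$); this is the mechanism you gesture at, but applied to restrictions of one fixed point rather than to a recursively chosen free sequence.
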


\begin{pf} 
The first part follows from the fact that $\natB$ is $\kappa$-Corson iff $\ult{(\Be)}$ is $\kappa$-Corson and from Theorem~\ref{Thmweteig7878}.

Now suppose $\Be$ is $\kappa$-Valdivia and is not $\kappa$-Corson. 
We may assume that $K \eqdef \ult(\Be)$ is a subset of $\{0,1\}^S$ and that $D \eqdef K \cap \Sigma_\kappa(S)$ is topologically dense in $K$. 
Since $K$ is not $\kappa$-Corson, we choose 
$x \in K \setminus \Sigma_\kappa(S)$.
Let $\lam = |\supp(x)| \goe \kappa$. 

Fix a continuous chain $\sett{M_\al}{\al < \lam}$ of $\kappa$-stable elementary submodels of a big enough $H(\theta)$ such that $x, K \in M_0$, $|M_\al| < \lam$ for $\al < \lam$ and 
$\supp(x) \subs \bigcup_{\al < \lam}M_\al \eqdef M$. 

Fix $\al < \lam$ and set $S_\al = M_\alpha \cap S$. 
We define the function $\fnn{ r_\al(x) }{ S }{ \{0,1\} }$
by the conditions
$$r_\al(x) \rest S_{\al} = x \rest S_\al \oraz r_\al(x) \rest (S \setminus S_\al) = 0.$$
We claim that $r_\al(x) \in K$. 

Let $V$ be a clopen basic neighborhood of $r_\al(x)$ in $2^{S}$. 
So $V$ is determined by a finite set $\sigma \subseteq S$ 
and a map $\fnn{\varphi}{\sigma}{2}$ satisfying 
$r_\al(x) \rest \sigma = \phi$, namely, $V = \setof{z \in 2^S}{z \rest \sig = \phi}$. 
We show that $V \cap K \neq \emptyset$.

Let $\tau \eqdef \sigma \cap S_\alpha = \sigma \cap M_\alpha$ and $\psi = \varphi \restriction \tau$.
Note that $\phi \rest (\sig \setminus \tau) = 0$.
Since $\tau \in M_\al$,  $\psi \in M_\al$ and $\psi$ defines a clopen set $W$ of $2^{S}$. 
We have that $W \in M_\al$.
Since $x \in W \cap K$, by elementarity,   
``$M_\al \models W \cap K \neq \emptyset$''. 
Therefore choose $y \in M_\al \cap W \cap D$. 
So $\supp(y) \subseteq S_\alpha$. 
On the other hand since $y \in M_\al$ and ``$M_\al \models |\supp(y)| < \kappa$'' (\,because $y \in D \subs \Sigma_\kappa(\lam)$\,), by $\kappa$-stability, we obtain that $\supp(y) \subseteq M_\al$.
Consequently, $y \rest (S \setminus S_\al) = 0$ and hence $y \in V$.
Therefore $y \in V \cap K$, showing that $V \cap K \neq \emptyset$. 

Next since $V$ is an arbitrary basic neighborhood of $r_\al(x)$ and $K$ is compact (and thus closed in $2^{S}$), it follows that $r_\al(x) \in K$.

Now, since $\sett{M_\al}{\al<\lam}$ is continuous, and since
$\alpha \leq \beta$ iff $r_\alpha(x) \subseteq r_\beta(x)$ we conclude that $\sett{r_\al(x)}{\al < \lam}$ is homeomorphic to $\lam + 1$. 
In particular $K$ has a topological copy of $\kappa+1$.
\end{pf}

\section{The pointwise topology on a Boolean algebra}
\label{pointwise topology}

Let $\Be$ be a Boolean algebra.
Its Stone space $\ult (\Be)$ can be identified with the space of all homomorphisms of the form $\map h \Be 2$, where $2$ denotes the $2$-element Boolean algebra.
By this way, it is natural to consider the \dfn{pointwise topology}{pointwise topology} on $\Be$, which is the weak topology induced by the family of all homomorphisms into the $2$-element algebra. 

In other words if $K$ is a $0$-dimensional compact space, we consider the Boolean algebra $C(K,2) = \pair{\clop(K)}{\tau_p}$ instead of the space $C(K)$ (endowed with the pointwise topology $\tau_p$).

The standard neighborhood basis consists of finite intersections of the following sets:
$$V^+_q = \setof{a \in \Be}{a \in q} \oraz V^-_q = \setof{a \in \Be}{a \notin q}$$
for $q \in \ult(\Be)$. 
We shall denote this topology by $\tau_p(\Be)$, or shortly, by $\tau_p$.

\medskip

Some applications of Boolean algebras endowed with the pointwise topology appear in the results \ref{ThmJednicka},  \ref{PropDwieapul} and   \ref{ThmDwa}.

\begin{tw}\label{ThmJednicka}
Let $\kappa$ be an uncountable regular cardinal and let $\Be$ be a $\kappa$-Corson Boolean algebra.
Then $\pair \Be {\tau_p}$ is $\kappa$-Lindel\"of. 

In particular, if $\kappa = \lam^+$, then the Lindel\"of number of $\pair \Be {\tau_p}$ does not exceed $\lam$.
\end{tw}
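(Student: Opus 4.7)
The plan is to realize $\pair{\Be}{\tau_p}$ as a closed topological subspace of $\pair{C(K)}{\tau_p}$ for $K = \ult(\Be)$, and then to quote Theorem~\ref{thm-456789}. By Corollary~\ref{cor-generation} the Stone space $K$ is a $\kappa$-Corson compact space, so Theorem~\ref{thm-456789} applies and tells us that $\pair{C(K)}{\tau_p}$ is $\kappa$-Lindel\"of. The rest of the argument is just about passing this property to $\Be$.

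First I would identify, via Stone duality, the Boolean algebra $\Be$ with $\clop(K) = C(K,2) \subseteq C(K)$. Under this identification the subbasic sets $V^+_q$ and $V^-_q$ (for $q \in K$) are precisely the restrictions to $C(K,2)$ of the subbasic sets $\setof{f \in C(K)}{f(q) \in (1/2, 2)}$ and $\setof{f \in C(K)}{f(q) \in (-1, 1/2)}$ of the pointwise topology on $C(K)$. Hence $\tau_p$ on $\Be$ agrees with the subspace topology inherited from $\pair{C(K)}{\tau_p}$.

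Next, I would verify that $C(K,2)$ is closed in $\pair{C(K)}{\tau_p}$: if $f \in C(K) \setminus C(K,2)$, some $x \in K$ satisfies $f(x) \notin \{0,1\}$, and one may choose a rational open interval $J \ni f(x)$ disjoint from $\{0,1\}$; then $\setof{g \in C(K)}{g(x) \in J}$ is a pointwise-open neighborhood of $f$ disjoint from $C(K,2)$. Finally, the $\kappa$-Lindel\"of property passes to closed subspaces by the standard trick: given an open cover $\Vee$ of $C(K,2)$ by sets open in $C(K)$, the collection $\Vee \cup \{C(K) \setminus C(K,2)\}$ covers $C(K)$, so it admits a subcover of cardinality $<\kappa$; removing the extra element yields the desired subcover of $\Vee$. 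Combined with Theorem~\ref{thm-456789}, this gives $\kappa$-Lindel\"ofness of $\pair{\Be}{\tau_p}$, and the ``in particular'' clause for $\kappa = \lam^+$ is then just the definition.

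There is no real obstacle beyond what is already encoded in Theorem~\ref{thm-456789}; the entire work is in that theorem, and the present statement is its zero-dimensional shadow. If one insisted on avoiding the detour through $C(K)$, one could instead directly imitate the proof of Theorem~\ref{thm-456789}: fix a basic open cover $\Vee \in M$ for a $\kappa$-stable elementary submodel $M \rloe H(\theta)$ with $|M|<\kappa$, use the retraction $\RR_M \colon K \to \cl(K \cap M)$ from Lemma~\ref{lemma-uiouoi} to associate with each $a \in \Be$ the clopen set $\RR_M^{-1}(a)$, observe that every $\tau_p$-basic neighborhood of $\RR_M^{-1}(a)$ meets $\Be \cap M$ (using that ultrafilters of $\cl(K\cap M)$ are determined by their trace on $M$ by Proposition~\ref{Pnowdela}), and then conclude via elementarity that $\Vee \cap M$ already covers $\Be$. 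The trickiest step in this alternative route would be replacing the norm-approximation argument of Theorem~\ref{thm-456789} by a purely topological one, which is precisely what the closed-subspace approach bypasses.
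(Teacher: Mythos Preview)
Your proof is correct and follows essentially the same approach as the paper's: identify $\pair{\Be}{\tau_p}$ with the closed subspace $C(K,2)$ of $\pair{C(K)}{\tau_p}$ and invoke Theorem~\ref{thm-456789}. The paper's proof is a three-line sketch of exactly this argument; you have simply spelled out the details (that $\tau_p$ agrees with the subspace topology, that $C(K,2)$ is closed, and that $\kappa$-Lindel\"ofness passes to closed subspaces) which the paper leaves implicit.
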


\begin{pf}
Recall that $\Be$ is $\kappa$-Corson iff $K \eqdef  \ult(\natB)$ is $\kappa$-Corson (Corollary~\ref{cor-generation}) and that 
$\clop(\natB) = C(K, 2)  \subseteq C(K)$.
By Theorem~\ref{thm-456789}, $C(K)$ is $\kappa$-Lindel\"of and thus $C(K, 2)$ is $\kappa$-Lindel\"of. 
\end{pf}

\begin{uwgi}
It is clear that the proof above can be adapted to the case of $\kappa$-Valdivia algebras, replacing $\tau_p$ by the topology $\tau_p(D)$ of pointwise convergence on the dense set $D$.
\end{uwgi}

We start with a simple example relevant to our study.

\begin{prop}
Let $\kappa$ be an uncountable cardinal and let $\Be$ be the interval Boolean algebra generated by the chain $\kappa$.
Then $\Be$ is $\kappa$-Corson if and only if $\kappa$ is singular.
\end{prop}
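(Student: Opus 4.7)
The plan is to identify $\ult(\Be(\kappa))$ with the ordinal $\kappa+1$ carrying its order topology, and then apply Corollary~\ref{cor-generation} in both directions. Each ultrafilter of $\Be(\kappa)$ is determined by the set $\{\beta : g_\beta \in p\}$ of its generator-indices (where $g_\beta \eqdef [\beta,\rightarrow)$), which is always an initial segment $[0,\alpha)$ for a unique $\alpha \in [1,\kappa]$. Tracing the clopen subbase shows that this parametrisation is a homeomorphism $\ult(\Be(\kappa)) \cong \kappa+1$.

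When $\kappa$ is regular, the top point $\kappa$ of $\kappa+1$ lies in the closure of the set of smaller ordinals, but no subset of size $<\kappa$ has $\kappa$ in its closure. Hence $\kappa+1$ has local tightness $\kappa$ at the top. By Proposition~\ref{prop-gsdfjgjhsdf} combined with Theorem~\ref{thm-753}, no $\kappa$-Corson space can have a point of local tightness $\kappa$, so $\Be(\kappa)$ fails to be $\kappa$-Corson.

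When $\kappa$ is singular with $\lambda \eqdef \cf \kappa < \kappa$, I would fix a strictly increasing continuous sequence $(\mu_i)_{i<\lambda}$ cofinal in $\kappa$ with $\mu_0 = 0$, and propose the generating set
$$G \ = \ \{g_{\mu_i} : i<\lambda\} \ \cup \ \{[\beta,\mu_{i+1}) : i<\lambda,\ \mu_i \le \beta < \mu_{i+1}\}.$$
That $G$ generates $\Be(\kappa)$ is immediate from $g_\alpha = [\alpha,\mu_{i+1}) + g_{\mu_{i+1}}$ whenever $\mu_i \le \alpha < \mu_{i+1}$. The work lies in bounding $|p\cap G|$. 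For $p = p_{(\alpha)}$ with $\alpha \in (\mu_j,\mu_{j+1}]$, the first piece contributes at most $j+1 \le \lambda$ generators $g_{\mu_i}$ (namely those with $i \le j$), and the second piece contributes at most $|\mu_{j+1}| < \kappa$ intervals (forcing $i = j$). For $\alpha = \mu_j$ with $j$ a limit, the second piece contributes none and the first at most $\lambda$. The top ultrafilter $p_{(\kappa)}$ is the crucial case: it contains every $g_{\mu_i}$, giving $\lambda$ members of $G$, and no interval $[\beta,\mu_{i+1})$ at all, since $g_{\mu_{i+1}} \in p_{(\kappa)}$ blocks the complement factor. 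All counts are $<\kappa$, so $\Be(\kappa)$ is $\kappa$-Corson.

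The main obstacle is calibrating $G$ to control the top ultrafilter. Using the unbounded generators $g_\beta$ directly would fail, since every $g_\beta$ lies in $p_{(\kappa)}$ and there are $\kappa$ of them; singularity is precisely what permits their replacement by right-bounded intervals, because $\kappa$ can then be covered by $\lambda<\kappa$ bounded pieces $[\mu_i,\mu_{i+1})$. This combinatorial feature is exactly what the local-tightness obstruction formalises in the regular case, giving both directions a common underlying reason.
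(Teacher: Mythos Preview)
Your proof is correct. Both directions work, and your case analysis in the singular part is complete once one notes that $\alpha=\mu_{k+1}$ falls under the case $\alpha\in(\mu_k,\mu_{k+1}]$.

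In the singular direction your construction is essentially the paper's: both slice $\kappa$ into $\lambda$ bounded blocks and use only $\lambda$ unbounded generators; your choice of intervals $[\beta,\mu_{i+1})$ is an explicit instance of the paper's abstract ``separating family $G_\alpha$ on $[\kappa_\alpha+1,\kappa_{\alpha+1}]$''. The genuine methodological difference is in the regular direction. The paper gives a self-contained combinatorial argument: viewing an arbitrary generating set $G$ as a point-separating family of clopen subsets of $\kappa+1$, either $\kappa$ lies in $\kappa$-many members of $G$, or else a pressing-down argument (Fodor) produces a single $\alpha<\kappa$ lying in $\kappa$-many members of $G$. Your route instead invokes the heavier machinery of Theorem~\ref{thm-753} and Proposition~\ref{prop-gsdfjgjhsdf} to conclude that $\kappa$-Corson implies local tightness $<\kappa$, and then observes that the top point of $\kappa+1$ has tightness $\kappa$. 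This is shorter and conceptually clean---indeed the paper itself uses the same tightness observation in Corollary~\ref{cor-ieyzuirzeuirez}---but it relies on the elementary-submodel characterization of $\kappa$-Corson spaces, whereas the paper's Fodor argument is elementary and exhibits concretely which ultrafilter witnesses the failure.
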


\begin{pf}
Suppose first that $\kappa$ is regular and fix a generating set $G \subs \Be$.
Recall that the space $\ult (\Be)$ is $\kappa + 1$ with the order topology.
We can look at $G$ as a family of clopen subsets of $\kappa + 1$ which separates points, that is, for each $x, y \in \kappa + 1$ there is $a \in G$ satisfying $|a \cap \dn xy| = 1$.

Let $S \subs \kappa$ consist of all $\xi < \kappa$ such that there exists $a \in G$ with $\xi \in a$ and $\kappa \notin a$.
If the set   
$H \eqdef \setof{a \in G}{\kappa \in a}$ has cardinality $\kappa$, then $H$ witnesses the fact that $\natB$ is not $\kappa$-Corson.
Otherwise $|\kappa \setminus S| < \kappa$ and hence $S$ is stationary. 
Therefore $S' \eqdef \setof{ \alpha \in S }{ \alpha \in \kappa \text{ is limit} }$ is stationary. 
For each $\xi \in S'$ choose $a_\xi \in G$ such that $\xi \in a_\xi$
and set $f(\xi) = \min (a_\xi)$. 
Then the function $\map f {S'} \kappa$ is regressive, therefore by Fodor's Pressing Down Lemma there are $\al < \kappa$ and a stationary set $T \subs S'$ such that $f(\xi) = \al$ for $\xi \in T$. 
Thus the set $\setof{a \in G}{\alpha \in a}$ witnesses 
the fact that $\natB$ is not $\kappa$-Corson.

Suppose now $\kappa$ is singular, that is,
 $\kappa = \sup_{\al < \lam}\kappa_\al$, where each $\kappa_\al$ is a cardinal $< \kappa$
and $\lam < \kappa$ is regular.
We may assume that the sequence $\sett{\kappa_\al}{\al < \lam}$ is strictly increasing, continuous (i.e. $\kappa_\delta = \sup_{\xi < \delta}\kappa_\xi$ for every limit ordinal $\delta < \lam$) and $\kappa_0 = 0$.
For each $\al < \lam$ choose a family $G_\al$ of clopen subsets of the interval $[\kappa_\al + 1, \kappa_{\al+1}]$ separating the points of $[\kappa_\al + 1, \kappa_{\al+1}]$ and such that $|G_\al| = \kappa_{\al+1}$.
Define $H = \setof{[\kappa_\al + 1, \kappa]}{\al < \lam}$ and let
$$G = H \cup \hbox{$\bigcup$} \setof{ G_\al }{\al < \lam} \cup \{0\} .$$
Note that
$$|\setof{a\in G}{\kappa \in a}| = |H| = \lam < \kappa.$$
Fix $\xi < \kappa$ and choose $\beta < \lam$ such that $\xi < \kappa_\beta$.
Then $\xi \in a \in G$ implies that either $a \in G_\al$ for some $\al < \beta$ or else $a \in H$.
It follows that
$$|\setof{a \in G}{\xi \in a}| \loe |\beta| \cdot \kappa_\beta < \kappa.$$ 

It remains to check that $G$ separates the points of $\kappa + 1$.
Fix $\xi < \eta \loe \kappa$. 
Let $\beta < \lam$ be minimal such that $\xi \loe \kappa_\beta$.
If $\kappa_\beta < \eta$ then $h \eqdef [\kappa_\beta + 1, \kappa] \in H$ satisfies $\eta \in h$ and $\xi \notin h$.
Next assume $\eta \loe \kappa_\beta$. 

Notice that necessarily $\beta = \al + 1$, because 
$\xi < \eta \loe \kappa_\beta$ and $\sett{\kappa_i}{i < \kappa}$ is continuous and strictly increasing.
We have $\kappa_\al < \xi$ and hence $\xi, \eta \in [\kappa_\al + 1, \kappa_{\al+1}]$.
Thus, there is $a \in G_\al$ such that $|\dn \xi \eta \cap a| = 1$.
This completes the proof. 
\end{pf}

\subsection{Initial chain algebras}
\label{initial}

A relatively big class of Corson-like spaces comes from trees.
Namely, let $\pair T \loe$ be a tree, that is $a_t \eqdef \setof{t' \in T}{t' < t}$ is a well ordering for any $t \in T$. 
In Monk \cite[Ch. 2, Special classes]{M1}, the Boolean subalgebra $\natB$ of $\powerset(T)$ generated by $\setof{ a_t }{ t \in T }$ is called the \dfn{initial chain algebra}{initial chain algebra} over $T$. 
The compact space $\sig T$ was defined and studied by the third author in~\cite{To2},~\cite{To1}.

Fix a tree $T$. 
Define $\sig T$ to be the set of all paths in $T$.
A {\em path} $A$ in a tree $T$ is a linearly ordered set $A \subs T$ such that $\setof{x \in T}{x \leq t} \subs A$ 
whenever $t \in A$. 
Hence any path is an initial subset of a branch and any branch is a path. 
For the pointwise topology, $\sig T$ is closed in $\{0,1\}^T$ 
because ``not being a path" is witnessed by at most two points of $T$.  
So subbasic clopen sets have the form
$$V_t^+ = \setof{A \in \sig T}{t \in A} \quad 
\text{ and } \quad V_t^- = \setof{A \in \sig T}{t \not\in A}$$
where $t \in T$. 
Hence $\setof{V_t^+}{t \in T}$
generates the Boolean algebra of clopen subsets of $\sig T$. 

To $A \in \sigma T$ we associate its characteristic map $\chi_T(A) \subseteq \{ 0,1 \}^T$.  So $| \supp(A) | = | A |$. 
Therefore if any branch of $T$ has cardinality less than $\kappa$ then $\chi_T$ is a topological embedding from $\sigma A$ into $\Sigma_\kappa(T) \subseteq  \{ 0,1 \}^T$. 
Note that $\chi_T$ is an order-isomorphism from $\pair{T}{\leq}$ onto its image. 
On the other hand, assume that $\kappa$ is regular and that $b$ is a a branch of cardinality $\kappa$, then $b$ has an  accumulation point in $\sigma T$ with local tightness $\kappa$. 
Thus, we have proved the following fact (see \cite[Lemma 3]{To1}).

\begin{lemma}
\label{lemma-uiffuidiuiudsf} 
\begin{it}
Let $\kappa$ be a regular cardinal and $T$ be a tree. 
Then the $0$-dimensional compact space $\sigma T$ is $\kappa$-Corson if and only if every branch of $T$ is of cardinality less than $\kappa$.
\qed
\end{it}
\end{lemma}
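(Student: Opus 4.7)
The plan is to handle the two directions separately, leveraging the characteristic-map embedding $\chi_T$ introduced just before the lemma for the easy direction, and the fact that $\kappa$-Corson compacta have local tightness $<\kappa$ (Proposition~\ref{prop-gsdfjgjhsdf} combined with Theorem~\ref{thm-753}) for the other.

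For the ``if'' direction, suppose every branch of $T$ has cardinality $<\kappa$. Any path $A \in \sigma T$ is a chain in $\pair{T}{\le}$ and is an initial segment of some maximal branch, so $|A| < \kappa$. Consequently $|\supp(\chi_T(A))| = |A| < \kappa$, and $\chi_T$ is a topological embedding of $\sigma T$ into $\Sigma_\kappa(T)$. By Definition~\ref{DFkapaCorsn}, $\sigma T$ is $\kappa$-Corson.

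For the ``only if'' direction, I would argue contrapositively: assuming some branch $b$ has cardinality $\goe \kappa$, I would exhibit a point of $\sigma T$ with local tightness $\goe \kappa$. Let $b^*$ be the initial segment of $b$ of order type exactly $\kappa$, and enumerate it in the tree order as $b^* = \setof{t_\xi}{\xi < \kappa}$. For each $\al < \kappa$, set $b^*_\al := \setof{t_\xi}{\xi < \al}$; each $b^*_\al$ is an initial segment of the branch $b$, hence a path, so $b^*_\al \in \sigma T$, and $b^* \in \sigma T$ as well. Put $A := \setof{b^*_\al}{\al < \kappa}$. Since any basic neighborhood of $b^*$ in the pointwise topology is determined by finitely many coordinates in $T$, it contains $b^*_\al$ for all sufficiently large $\al$, so $b^* \in \cl(A)$.

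It remains to show $\tight(b^*, A) \goe \kappa$. Given any $A' \subs A$ with $|A'| < \kappa$, write $A' = \setof{b^*_{\al_i}}{i \in I}$ and let $\beta = \sup_{i \in I}\al_i$. By the regularity of $\kappa$, $\beta < \kappa$, so $t_\beta \in b^*$ while $t_\beta \notin b^*_{\al_i}$ for every $i \in I$ (each $b^*_{\al_i}$ consists of elements of index strictly below $\al_i \loe \beta$). Thus the subbasic clopen set $V^+_{t_\beta} = \setof{C \in \sigma T}{t_\beta \in C}$ is a neighborhood of $b^*$ disjoint from $A'$, giving $b^* \notin \cl(A')$. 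Hence $\tight(b^*, A) \goe \kappa$, which contradicts the consequence of Theorem~\ref{thm-753} and Proposition~\ref{prop-gsdfjgjhsdf} that every $\kappa$-Corson compactum has local tightness $< \kappa$. There is no genuine obstacle here; the only points to check are that initial segments of branches really are paths and that the topology on $\sigma T$ is the subspace topology of $\{0,1\}^T$, both of which are immediate from the definitions given above.
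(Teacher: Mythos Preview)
Your proof is correct and follows essentially the same approach as the paper: the paper's argument (given in the paragraph immediately preceding the lemma) uses the characteristic-map embedding $\chi_T$ for the ``if'' direction and observes that a branch of cardinality $\geq\kappa$ yields an accumulation point of local tightness $\kappa$ for the ``only if'' direction. You have simply made the tightness computation explicit and named the supporting results (Theorem~\ref{thm-753} and Proposition~\ref{prop-gsdfjgjhsdf}) that the paper leaves implicit.
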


\begin{remark*}
Define a ``pseudo-tree'' $T$ by requiring that $a_t \eqdef \setof{t' \in T}{t' < t}$ is a linear ordering for every $t \in T$. 
For pseudo-trees, Lemma~\ref{lemma-uiffuidiuiudsf} is not true. 
A counterexample is the linear ordering $\natR$: 
the space $\sigma \natR$ is homeomorphic to $\ult(\natB(\natR))$, and by Proposition~\ref{lemma-jkljlkjlk}, $\natB(\natR)$ is not $\cont$-Valdivia and thus not $\cont$-Corson. 
\end{remark*}

A \dfn{free sequence}{free sequence} in a space $X$ is a sequence $ x \eqdef \seqnn{ x_\xi }{ \xi<\alpha }$ ($\alpha$ an ordinal) of elements of $X$ such that for all $\xi<\alpha$: 
$\cl_X\setof{ x_\eta }{ \eta < \xi } 
\cap \cl_X\setof{ x_\eta }{ \xi \leq \eta < \alpha } = \emptyset$ (recall that $\cl_X$ denote the topological closure operation). 
Any free sequence is a discrete subset of $X$.
Perhaps one of the main interest in free sequences is given by the following theorem due to Arhangelski\u{\i} (based on proofs of Shapirovski\u{\i}): see Monk \cite[Theorem 4.20]{M1} and Juh\'{a}sz \cite[3.12]{J}. 

\begin{lemma}[Arhangelski\u{\i} and Shapirovski\u{\i}]
\label{lemma-hdfjkhs12}
Let $K$ be a compact Hausdorff space. 
Then 
$$\sup  \, \setof{ \tight(p,K) }{p \in K } = \sup \, \setof{\, | \mu | }{\exists \text{ a free sequence in } K \text{ of length } \mu \text{ in } K \,}.\qed$$
\end{lemma}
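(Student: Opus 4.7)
The plan is to establish both inequalities of the claimed equality, writing $t(K) \eqdef \sup\setof{\tight(p,K)}{p \in K}$ and $F(K) \eqdef \sup\setof{|\mu|}{\text{there exists a free sequence in $K$ of length } \mu}$. For $F(K) \loe t(K)$, start from a free sequence $\seqnn{x_\xi}{\xi < \mu}$ in $K$ with $\mu$ regular. The chain $\setof{\cl\setof{x_\xi}{\xi \goe \alpha}}{\alpha < \mu}$ of nonempty closed sets has the finite intersection property, so compactness of $K$ furnishes a point $p \in \bigcap_{\alpha < \mu} \cl\setof{x_\xi}{\xi \goe \alpha}$. For any $B \subs \setof{x_\xi}{\xi < \mu}$ with $|B| < \mu$, regularity of $\mu$ bounds the indices of $B$ below some $\alpha < \mu$, and the free-sequence property then yields $\cl B \cap \cl\setof{x_\xi}{\xi \goe \alpha} = \emptyset$, hence $p \notin \cl B$ and $\tight(p, K) \goe \mu$. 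Singular $\mu$ reduce to the regular case by passing to a cofinal subsequence of length $\cf \mu$, which remains free.

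For the converse $t(K) \loe F(K)$, fix any $\lambda < t(K)$ and choose $p_0 \in K$ and $A \subs K$ with $p_0 \in \cl A$ but $p_0 \notin \cl B$ whenever $B \subs A$ has $|B| \loe \lambda$; the task is to produce a free sequence in $A$ of length $\lambda^+$. By transfinite recursion on $\xi < \lambda^+$ I would build $x_\xi \in A$ together with an open neighborhood $U_\xi$ of $p_0$ satisfying \textup{(a)} $\cl U_\xi \cap \cl\setof{x_\eta}{\eta < \xi} = \emptyset$ and \textup{(b)} $x_\xi \in U_\alpha$ for every $\alpha \loe \xi$. At each stage the cardinality hypothesis $|\setof{x_\eta}{\eta < \xi}| \loe \lambda$ keeps $p_0$ outside $\cl\setof{x_\eta}{\eta < \xi}$, and normality of $K$ (which follows from compact Hausdorff) supplies an open $V_\xi \ni p_0$ whose closure avoids that closed set. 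Once the recursion completes, the free-sequence property is immediate: by \textup{(b)} one has $\cl\setof{x_\eta}{\eta \goe \alpha} \subs \cl U_\alpha$, while $\cl U_\alpha$ is disjoint from $\cl\setof{x_\eta}{\eta < \alpha}$ by \textup{(a)}, so the two closures are disjoint.

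The hard part will be keeping the recursion going at limit ordinals $\xi < \lambda^+$. There the intersection $\bigcap_{\alpha < \xi} U_\alpha$ need not be open, so one cannot simply take $U_\xi$ to be an open neighborhood of $p_0$ sitting inside $V_\xi \cap \bigcap_{\alpha < \xi} U_\alpha$ and pick $x_\xi$ there. My plan is to carry through the recursion the additional inductive invariant $p_0 \in \cl\bigl(A \cap \bigcap_{\alpha < \xi} U_\alpha\bigr)$: this invariant guarantees that $V_\xi \cap A \cap \bigcap_{\alpha < \xi} U_\alpha \nnempty$, so $x_\xi$ can be selected, and, combined with compactness of $K$ and the tightness hypothesis on $p_0$, must be shown to propagate across limit stages. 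Establishing this invariant and ensuring the chosen neighborhoods remain compatible at limits is the essential technical crux of the construction.
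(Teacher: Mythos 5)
The paper does not actually prove this lemma: it is quoted from Monk and Juh\'asz with the \textsl{qed} placed immediately after the statement, so your argument has to stand entirely on its own.

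Your first inequality is correct. (One small slip: for singular $\mu$, passing to a cofinal subsequence of length $\cf\mu$ only yields $\tight(p,K)\goe\cf\mu$, which is too weak. The right reduction is via \emph{initial segments}: a free sequence of length $\mu$ has free initial segments of every regular length $\lam<\mu$, each forcing the left-hand supremum to be $\goe\lam$, and $\sup\setof{\lam}{\lam<\mu,\ \lam\text{ regular}}=\mu$. Since both sides of the identity are suprema this does not damage the statement, but your stated reduction does not do the job.)

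The second inequality is where the theorem actually lives, and your proposal does not prove it. You set up the standard recursion correctly --- the verification that conditions (a) and (b) yield a free sequence is fine, and the successor step of the invariant $p_0\in\cl\bigl(A\cap\bigcap_{\alpha<\xi}U_\alpha\bigr)$ is indeed immediate, since intersecting a set whose closure contains $p_0$ with one more open neighborhood of $p_0$ preserves that property. But you then explicitly defer the limit step, and that step is the entire content of the Arhangelski\u{\i}--Shapirovski\u{\i} theorem, not a routine verification. For a decreasing transfinite chain of sets each having $p_0$ in its closure, the intersection can perfectly well have closure missing $p_0$; compactness by itself only produces \emph{some} point of $\bigcap_{\alpha<\xi}\cl\bigl(A\cap\bigcap_{\beta\loe\alpha}U_\beta\bigr)$, with no guarantee that this point is $p_0$ or that it lies in $\cl\bigl(A\cap\bigcap_{\alpha<\xi}U_\alpha\bigr)$. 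Closing the gap requires a genuinely additional idea --- in the classical proof one first replaces $A$ by its $\lam$-closure $\bigcup\setof{\cl B}{B\subs A,\ |B|\loe\lam}$ and then proves, by a separate compactness argument, that $p_0$ remains in the closure of the trace of $A$ on any intersection of at most $\lam$ closed neighborhoods of $p_0$. Since you name this step as ``the essential technical crux'' but do not carry it out, what you have is a correct skeleton of the known argument with its load-bearing wall missing; as submitted, the hard direction is unproved.
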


Following \cite{To1}, for a compact space $K$ we let $\sigma K$ be the set of all free sequences in $K$. 
We order two free sequences 
$x$ and $y$ by $x \preceq y$ if $y$ extends $x$.
Since $\pair{ \sigma K }{ \preceq }$ is a tree, we can consider the topological space $\sigma (\sigma K)$, denoted  by 
$\sigma^2 K$, of all paths of $\sigma K$. 
Our next result is the following.

\begin{prop}
\label{prop-sigma-1}
Let $\kappa$ be an uncountable regular cardinal and $K$ be a compact Hausdorff space $K$. 
The space $\sigma^2 K$ is $\kappa$-Corson if and only if $K$ has local tightness $< \kappa$. 
\end{prop}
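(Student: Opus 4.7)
My plan is to chain Lemma~\ref{lemma-uiffuidiuiudsf} (which characterizes $\kappa$-Corson spaces of the form $\sigma T$ via the cardinalities of branches of $T$) with Lemma~\ref{lemma-hdfjkhs12} of Arhangelski\u{\i} and Shapirovski\u{\i} (relating free-sequence lengths in a compact space to tightness). Applying the first to the tree $T \eqdef \sigma K$, the space $\sigma^2 K = \sigma(\sigma K)$ is $\kappa$-Corson if and only if every branch of $\sigma K$ has cardinality $<\kappa$. The proof then reduces to showing that, for uncountable regular $\kappa$, the following are equivalent: (a) every branch of $\sigma K$ has cardinality $<\kappa$; (b) every free sequence in $K$ has cardinality $<\kappa$; (c) $K$ has local tightness $<\kappa$.

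For (a)$\iff$(b), one direction sends a free sequence $z = \seq{z_\eta : \eta<\mu}$ in $K$ with $|\mu|\goe\kappa$ to the chain $\setof{z\rest\xi}{\xi\loe\mu}$ in $\sigma K$, which extends to a branch of cardinality $\goe\kappa$. Conversely, a branch $\setof{x^\xi}{\xi<\lambda}$ of $\sigma K$ with $|\lambda|\goe\kappa$ yields the transfinite sequence $y \eqdef \bigcup_{\xi<\lambda} x^\xi$ (well-defined, since the $x^\xi$ cohere as initial segments of one another), whose length is $\goe\lambda$; a standard compactness argument (essentially present in~\cite{To1}) shows that $y$ is itself a free sequence in $K$. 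For (b)$\iff$(c), I would invoke Lemma~\ref{lemma-hdfjkhs12}: a free sequence of regular length $\mu \goe \kappa$ in compact $K$ admits an accumulation point $p$ with $\tight(p,A)\goe\mu$, where $A$ is the underlying set of its terms, contradicting local tightness $<\kappa$; the reverse implication follows from Lemma~\ref{lemma-hdfjkhs12} combined with the regularity of $\kappa$.

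The main technical obstacle is the claim that the union of an extension-chain of free sequences in a compact Hausdorff space is itself a free sequence. This is delicate because, writing the tail $B = \setof{y_\eta}{\eta\goe\beta}$ as $\bigcup_\xi B_\xi$ where $B_\xi$ is the tail-from-$\beta$ of $x^\xi$, one has $\cl A \cap \cl B_\xi = \emptyset$ for every $\xi$ (from the freeness of $x^\xi$ and the fact that the initial segment $A$ coincides with the initial segment of $x^\xi$ up to $\beta$ once $\xi$ is large enough), yet $\cl B = \cl\bigl(\bigcup_\xi B_\xi\bigr)$ may in principle contain limit points outside every individual $\cl B_\xi$. The resolution uses a compactness-and-separation argument, effectively forcing any hypothetical $p\in \cl A\cap \cl B$ to lie in some single $\cl B_\xi$ and thereby contradicting the freeness of the corresponding $x^\xi$.
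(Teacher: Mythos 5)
Your overall strategy is the same as the paper's: apply Lemma~\ref{lemma-uiffuidiuiudsf} to the tree $T=\sigma K$ and then translate branch lengths into free-sequence lengths via Lemma~\ref{lemma-hdfjkhs12}. The genuine gap is in your step (b)$\Rightarrow$(a), namely the claim that the union $y=\bigcup_{\xi<\lambda}x^\xi$ of an extension-chain of free sequences is itself a free sequence ``by a standard compactness argument.'' This is false in general: in $K=[0,1]$ the finite initial segments of an enumeration $\seqnn{q_n}{n<\omega}$ of $\natQ\cap[0,1]$ are all free (every injective finite sequence is free, since finite sets are closed), they form a chain in $\sigma K$, yet their union is not free, because $q_0\in[0,1]=\cl\setof{q_m}{m\goe 1}$. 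The phenomenon you flag as the ``main technical obstacle'' --- that $\cl B$ may contain points lying outside every $\cl B_\xi$ --- genuinely occurs, and no compactness-and-separation argument can push a point of $\cl A\cap\cl B$ into a single $\cl B_\xi$: the family $\sett{\cl B_\xi}{\xi}$ is \emph{increasing}, not decreasing, so compactness gives nothing here. Hence (a)$\iff$(b) is not a self-contained equivalence.

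The repair is to prove the three conditions in a cycle rather than pairwise, invoking the tightness hypothesis exactly where the union argument needs it. (a)$\Rightarrow$(b) is your easy direction (the initial segments of a long free sequence form a long chain). (b)$\Rightarrow$(c) is the local form of the Arhangelski\u{\i}--Shapirovski\u{\i} theorem (note that the sup equality of Lemma~\ref{lemma-hdfjkhs12} alone is not quite enough when $\kappa$ is a regular limit cardinal). For (c)$\Rightarrow$(a), assume local tightness $<\kappa$ and suppose some branch has cardinality $\goe\kappa$; form the union $y$ and consider $y\rest\kappa$, all of whose proper initial segments are free. If $y\rest\kappa$ is free, then intersecting the (decreasing, nonempty, closed) tail closures yields a point $p$ with $\tight\bigl(p,\setof{y_\eta}{\eta<\kappa}\bigr)\goe\kappa$ by regularity of $\kappa$, contradicting (c). If $y\rest\kappa$ fails to be free at some $\beta$, pick $p\in\cl A_\beta\cap\cl B_\beta$; since $\tight(p,B_\beta)<\kappa$ and $\kappa$ is regular, there is $\gamma<\kappa$ with $p\in\cl\setof{y_\eta}{\beta\loe\eta<\gamma}$, contradicting the freeness of the initial segment $y\rest\gamma$. (For what it is worth, the paper's own two-line proof silently identifies branches of $\sigma K$ with free sequences and elides exactly this point; you were right to single it out, but your stated resolution does not close it.)
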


\begin{proof}
Note that for any free sequence $x$ in $K$, i.e. $x \in \sigma K$,  the path 
$\setof{y \in \sigma K}{ y \preceq x } \in \sigma^2 K$ is a well-ordering of cardinality $ | x |$. 

Now by Proposition~\ref{lemma-uiffuidiuiudsf} , $\sigma^2 K$ is $\kappa$-Corson iff every branch of $\sigma K$ is of cardinality less than $\kappa$ iff every path of $\sigma^2 K$ is of cardinality less than $\kappa$. 
On the other hand by Lemma~\ref{lemma-hdfjkhs12}, 
every free sequence $x$ in $K$ is of length less than $\kappa$, i.e. every member of $\sigma K$ is of cardinality less than $\kappa$, iff $\tight(p, K) < \kappa$ for every $p \in K$. 
\end{proof}

We now develop the above result in terms of Boolean algebras, see:~\cite{M2}.
Fix a Boolean algebra $\Be$.
A \dfn{free sequence}{free sequence} in $\Be$ is a sequence $\sett{a_\xi}{\xi < \alpha}$, where $\alpha$ is an ordinal, and for every finite sets $S, T \subs \alpha$ such that $\xi < \zeta$ whenever $\xi \in S$ and $\zeta \in T$, it holds that 
\vspace{-1.5mm}
$$\hbox{$\prod$}_{\xi \in S} \,\, a_\xi \,\, \cdot \,\, \hbox{$\prod$}_{\zeta \in T} - a_\zeta > 0$$ 
(Finite intersection property).
This notion of a free sequence is closely related to the usual notion of a free sequence of points in a topological space. 
Let $\Be$ be a Boolean algebra. 
\begin{itemize}
\item[{\rm($*$)}]
For any ordinal $\alpha$, there is a free sequence $\seqnn{ a_\xi }{ \xi < \alpha }$ in $\Be$ if and only if there is a free sequence $\seqnn{ x_\xi }{ \xi<\alpha }$ in $\ult(\Be)$.
\end{itemize}
%
%
Indeed let $\seqnn{ a_\xi }{ \xi<\alpha }$ be a free sequence in the algebra $\Be$. 
For each $\beta<\alpha$, by compactness, choose $x_\beta \in \ult(\Be)$ such that $\prod_{\xi < \beta} a_\xi \, \cdot \, \prod_{\beta \leq \zeta < \alpha} - a_\zeta \in x_\beta$. 
Then $ \seqnn{ x_\xi }{ \xi<\alpha }$ is a free sequence over $\ult(\Be)$. 
Conversely let  $\seqnn{ x_\xi }{ \xi<\alpha }$ be a free sequence in $\ult(\Be)$. 
For each $\beta < \alpha$ choose $a_\beta \in \Be$ such that 
%
$\setof{ x_\xi }{ \xi \leq \beta } \subseteq 
\setof{ y \in \ult(\Be)  \,  }{ \, {-}a_\beta \in y }$ and 
%
$\setof{ x_\xi }{ \xi > \beta } \subseteq 
\setof{ y \in \ult(\Be)  \,  }{  \, a_\beta \in y }$.  
%
%
Then $ \seqnn{ a_\xi }{ \xi<\alpha }$ is a free sequence over $\Be$.
These processes are not inverses of each other.

\smallskip

Let $\natB$ be a Boolean algebra. 
Denote by $\sigma(\Be)$ the tree of all free sequences in $\Be$, ordered in the usual sense, that is, $p \loe q$ if $q$ extends $p$. 
Since $\pair{ \sigma(\Be) }{ \preceq }$ is a tree we can consider the set $\sigma (\sigma(\Be)) \subseteq \{0,1\}^{\sigma(\Be)}$, denoted  by 
$\sigma^2 \Be$, of all paths of $\sigma(\Be)$ endowed with the pointwise topology. 
Recall that for an uncountable cardinal $\kappa$ and for a Boolean algebra $\natA$: $\ult(\natA)$ is $\kappa$-Corson if and only if $\natA$ is $\kappa$-Corson (Corollary~\ref{cor-generation}). 
The proof of the following result is similar to that of Proposition~\ref{prop-sigma-1}.

\begin{prop}
\label{prop-sigma-2}
Let $\kappa$ be an uncountable regular cardinal.
Given a Boolean algebra $\Be$, the space $\sig^2\Be$ is $\kappa$-Corson if and only if $\ult(\Be)$ has local tightness $< \kappa$. 
\qed
\end{prop}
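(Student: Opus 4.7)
The plan is to translate the problem to Proposition~\ref{prop-sigma-1} applied to the compact Hausdorff space $K := \ult(\Be)$, using observation $(*)$ as the bridge between the combinatorics of free sequences in $\Be$ and in $\ult(\Be)$.

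First I would apply Lemma~\ref{lemma-uiffuidiuiudsf} to the tree $T = \sigma(\Be)$: the $0$-dimensional compact space $\sigma^2\Be = \sigma(\sigma(\Be))$ is $\kappa$-Corson if and only if every branch of $\sigma(\Be)$ has cardinality $<\kappa$. Next, exactly as in the proof of Proposition~\ref{prop-sigma-1}, one observes that every free sequence $a \in \sigma(\Be)$ of length $\alpha$ determines a path $\setof{b \in \sigma(\Be)}{b \preceq a}$ of cardinality $|\alpha|$, while every branch of $\sigma(\Be)$ attains a maximum element: indeed, the union of a $\preceq$-chain of free sequences in $\Be$ is again a free sequence (the finite intersection property depending only on finitely many coordinates), so by maximality this union lies in the branch. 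Consequently, the condition above is equivalent to the statement that every free sequence in $\Be$ has length of cardinality $<\kappa$.

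Now observation $(*)$ transfers this condition to $K$: for every ordinal $\alpha$, a free sequence of length $\alpha$ exists in $\Be$ if and only if one exists in $\ult(\Be)$. Hence every free sequence in $\Be$ has cardinality $<\kappa$ iff the same holds for every free sequence in $\ult(\Be)$. Finally, Lemma~\ref{lemma-hdfjkhs12} (Arhangel'ski\u{\i}--Shapirovski\u{\i}) yields that the latter condition is equivalent to $\tight(p, \ult(\Be)) < \kappa$ for every $p \in \ult(\Be)$, that is, to $\ult(\Be)$ having local tightness $<\kappa$. Chaining the equivalences gives the proposition.

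I do not anticipate any serious obstacle: this is a direct Boolean-algebraic analogue of Proposition~\ref{prop-sigma-1}, with $(*)$ performing the only genuinely new step. The two points meriting care are the verification that every branch of $\sigma(\Be)$ attains its supremum (immediate from the finite-character nature of freeness, as noted above), and the deduction from the supremum form of Lemma~\ref{lemma-hdfjkhs12} to the pointwise condition on $\ult(\Be)$, which uses the regularity of $\kappa$ together with the fact that the tightness at a point is itself realized by a free sequence, precisely as in the corresponding step of Proposition~\ref{prop-sigma-1}.
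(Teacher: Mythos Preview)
Your proposal is correct and takes essentially the same approach as the paper, which leaves the proof to the reader with the remark that it is ``similar to that of Proposition~\ref{prop-sigma-1}''. Your use of $(*)$ to pass between free sequences in $\Be$ and in $\ult(\Be)$ is exactly the intended adaptation, and your explicit verification that branches of $\sigma(\Be)$ attain their suprema (via the finite character of freeness) fills in a step the paper also glosses over in the proof of Proposition~\ref{prop-sigma-1}.
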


Setting 
$$\tight(\Be) = \sup \setof{ |\alpha| }{ \text{\rm there is a free sequence in } \Be \text{\rm\ of order type } \alpha },$$
by ($*$), we have that
$\tight(\Be) = \tight(\ult(\Be))$.

\medskip

We do not know an example where the spaces $\sigma^2(\Be)$ and $\sigma^2(\ult(\Be))$ are not homeomorphic, but it is not difficult to see that $\sig^2(\Be)$ is homeomorphic to a subspace of $\sigma^2(\ult(\Be))$.

\medskip

Recall that a Boolean algebra $\Be$ has the \dfn{$\lambda$-separation property}{separation property}, if for each $A, B \subs \Be$ such that $|A| < \lambda$, $|B| < \lambda$, satisfying $a \cdot b = 0$ for all $a \in A$, $b \in B$, there exists $c \in \Be$ such that $a \loe c$ for $a \in A$ and $b \loe - c$ for $b \in B$. 
So the countable separation property is the $\aleph_1$-separation property. 

Remark that if $\Be$ is $\lambda$-complete (every subset of cardinality less than $\lambda$ has a supremum and an infimum) then $\Be$ has the $\lambda$--separation property.

Take $K$ to be the Stone space of a complete Boolean algebra 
$\Be$ of size $\cont$ (for instance $\Be = \powerset(\natN$) and choose $D \subs K$ so that $\abs{D} = \cont^+$.
Then $\alex(K,D)$ is $\cont^+$-Corson and it still has the countable separation property. 
Indeed since $| \Be | < \cont^+$ the space $K$ is $\cont^+$-Corson and thus, by Proposition~\ref{Prowngog}, $\alex(K,D)$ is $\cont^+$-Corson. 
Next, since $\Be$ is $\sigma$-complete, it has the countable separation property, therefore so does $\alex(K,D)$.

\smallskip

As usual if $\lambda$ is a cardinal, we denote by $\lambda^+$ its successor cardinal and any cardinal is considered as an ordinal. 

\begin{corollary} 
\label{cor-cor-456456456}
Assume $\lambda^{<\lambda} = \lambda$.
There exists a $\lambda^+$-Corson Boolean algebra $\Be$ with the $\lambda$-separation property and such that $|\Be| = \lambda^+$.
\end{corollary}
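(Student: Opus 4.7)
The strategy generalizes the example immediately preceding the corollary, controlling the cardinality of the resulting algebra via the hypothesis $\lambda^{<\lambda} = \lambda$. Under this assumption there exists a $\lambda$-complete Boolean algebra $\Be_0$ of cardinality $\lambda$; for instance, the free $\lambda$-complete Boolean algebra on $\lambda$ generators has cardinality $\lambda^{<\lambda} = \lambda$. Let $K_0 = \ult(\Be_0)$. Since $\w(K_0) = \lambda$, the space $K_0$ embeds into $\{0,1\}^\lambda = \Sigma_{\lambda^+}(\lambda)$, so $K_0$ is $\lambda^+$-Corson. Because $|K_0| = 2^\lambda \geq \lambda^+$, fix $D \subseteq K_0$ with $|D| = \lambda^+$ and form the Alexandrov duplication $K := \alex(K_0, D)$; by Proposition~\ref{Prowngog}, $K$ is $\lambda^+$-Corson as well.

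Let $\Be$ be the Boolean subalgebra of $\clop(K)$ generated by the lifted clopens $V^{*} := V \times \{0\} \cup (V \cap D) \times \{1\}$ for $V \in \Be_0$ together with the isolated singletons $\{(x,1)\}$ for $x \in D$, and closed under those $<\lambda$-joins and meets that exist in $\clop(K)$. Each element of $\Be$ then has the form $V^{*} \symdiff E$ with $V \in \Be_0$ and $E \subseteq D \times \{1\}$ of cardinality $<\lambda$. A direct cardinal computation using $\lambda^{<\lambda} = \lambda$ gives $(\lambda^+)^{<\lambda} = \lambda^+$, whence $|\Be| = \lambda^+$. Moreover, by Theorem~\ref{Thmjgwrgo} the algebra $\Be$ is $\lambda^+$-Corson, being a subalgebra of the $\lambda^+$-Corson algebra $\clop(K)$.

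For the $\lambda$-separation property, given pairwise disjoint families $A, B \subseteq \Be$ of size $<\lambda$, extract the $\Be_0$-parts $\{V_a\}_{a \in A}$ and $\{V_b\}_{b \in B}$, which are pairwise disjoint in $\Be_0$. The $\lambda$-completeness of $\Be_0$ then yields $V \in \Be_0$ with $V_a \leq V$ and $V \cdot V_b = 0$. Enlarging $V$ within $\Be_0$ so as to absorb the $K_0$-closure of the union of the $<\lambda$ many ``added'' isolated points coming from the $A$-side, and accounting for the finitely-many-per-element ``removed'' isolated points on the $B$-side, one obtains a separator of the form $V^{*} \symdiff E^{*}$ with $|E^{*}| < \lambda$, which lies in $\Be$ by virtue of its closure under $<\lambda$-Boolean operations.

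The main obstacle is precisely this last verification: showing that the required $<\lambda$-many isolated-point corrections genuinely assemble into a single clopen subset of $K$ belonging to $\Be$. This is resolved by the simultaneous use of $\lambda$-completeness of $\Be_0$ (to enlarge the separating clopen in $K_0$ so that it swallows any accumulation points of the correction set) and the design decision to close $\Be$ under $<\lambda$-operations rather than merely under finite Boolean ones; together these guarantee both the clopen-ness of the candidate separator inside $K$ and its membership in $\Be$.
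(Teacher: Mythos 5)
There is a genuine gap, and it sits exactly where you flag the ``main obstacle.'' First, your description of $\Be$ is incompatible with the topology of the Alexandrov duplication: a set of the form $V^{*}\symdiff E$ with $E\subseteq D\times\{1\}$ is clopen in $\alex(K_0,D)$ only when $E$ is \emph{finite}. If $E$ is infinite, its projection to $K_0$ has an accumulation point $x_0$ by compactness, and every basic neighbourhood of $(x_0,0)$ contains infinitely many points of $E$; splitting $E$ into the part inside and the part outside $V^{*}$, one sees that $V^{*}\symdiff E$ fails to be open or fails to be closed at $(x_0,0)$. Hence $\clop(\alex(K_0,D))$ consists exactly of the sets $V^{*}\symdiff E$ with $E$ finite, the ``$<\lambda$-joins and meets that exist in $\clop(K)$'' do not produce your claimed normal form (such suprema, when they exist, are not unions), and the cardinality count and the appeal to Theorem~\ref{Thmjgwrgo} rest on a faulty description of $\Be$ --- although $|\clop(K)|=\lambda\cdot\lambda^{+}=\lambda^{+}$ holds anyway, precisely because corrections are finite.

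Second, and more seriously, the separation argument does not go through. The $\lambda$-completeness of $\Be_0$ does separate the lifted parts $V_a$, $V_b$, but it gives no control over the $<\lambda$ many isolated-point corrections: since every clopen subset of $K$ agrees with a lifted set up to \emph{finitely} many duplicated points, a separator would require a clopen $V'\subseteq K_0$ containing, modulo a finite set, the projections of the points to be added while meeting the projections of the points to be avoided in only a finite set. These projections are two disjoint subsets of $\ult(\Be_0)$ of size $<\lambda$ with no further structure, and in general no clopen set separates them even modulo finite sets (for instance if $D$ contains a countable set $Q_0$ together with a countable set $P_0$ of accumulation points of $Q_0$, every clopen set meeting $P_0$ contains infinitely many points of $Q_0$); ``enlarging $V$ to absorb the closure of the added points'' then destroys disjointness from the $B$-side rather than repairing anything. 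This is exactly why the paper does not take $\Be_0$ to be a $\lambda$-complete algebra: it takes $\Be_0$ to be the interval algebra of Hausdorff's $\eta_\lambda$-ordering $L$, with $|L|=\lambda^{<\lambda}=\lambda$, and derives the $\lambda$-separation property from the order-theoretic fact that $L$ contains no copy of $\lambda^{+}$ or its converse; even there, the transfer of the separation property to the duplicate is the delicate point and hinges on the choice of $D$. In short, your outer scaffolding (Alexandrov duplication of a size-$\lambda$ algebra at $\lambda^{+}$ points, Proposition~\ref{Prowngog} for $\lambda^{+}$-Corsonness) matches the paper's, but the $\lambda$-separation property --- the substance of the statement --- is not established by your argument.
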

\begin{proof} 
We consider the set $K = \{0,1\}^{\lambda}$, of all binary $\lambda$-sequences, ordered lexicographically and endowed with the interval topology. 
We denote by $L$ the subset of $K$ consisting of all $x \in K$ such that $\supp(x) \eqdef \setof{ \xi < \lambda }{ x(\xi) \neq 0 }$ is of cardinality $ < \lambda$. 
Note that $| L | = \lambda^{<\lambda} = \lambda < 2^{\lambda} = | K |$. 
This linear ordering $L$ was introduced by Hausdorff and is usually denoted by $\eta_\alpha$, see~\cite[Theorem 9.25, Ch. 9, \S4]{Ros}.

Let $\Be_0$ be the interval algebra over the linear ordering $L$.  
So $|\Be_0| = |L| = \lambda$ and thus $\Be_0$ is $\lambda^+$-Corson.
Now choose $D \subs K$ so that $|D| = \lambda^+ < |K|$ 
and let $\Be$ be the algebra  of clopen sets of the Alexandrov duplication $\alex(\ult(\Be_0), D)$. 
By Proposition~\ref{Prowngog}, $\Be$ is $\lambda^+$-Corson, 
and obviously $| \Be | = \lambda^+$.
Next, since $L$ contains neither a copy of the ordinal $\lambda^+$ nor its converse, therefore $\Be_0$ has the $\lambda$-separation property. 
\end{proof}

In Theorem~\ref{thm-uihkjvv} we shall see that a complete Boolean algebra of regular size $\kappa > \aleph_0$ cannot be $\kappa$-Corson.

\subsection{Poset algebras and modest semilattice algebras }
\label{SectModestlats}

Let $P$ be a poset. 
We say that $F \subseteq P$ is a \dfn{final segment}{final segment} whenever 
$\upclcone{p} \eqdef \setof{ q \in P }{ q \geq p } \subseteq F$ for every $p \in F$. 
Also in a lattice $L$ a set $F \subseteq L$ is a 
\dfn{filter}{filter} if $F$ is a final subset of $L$ such that 
$x \wedge y \in F$ for every $x,y \in F$, and  
a filter is \dfn{prime}{prime filter} if moreover $x \join y \in F$ implies $x \in F$ or $y \in F$.

We define the \dfn{poset algebra\/}{poset algebra} $\Be(P)$ 
as follows.
Let $\Fr(P)$ be the free Boolean algebra over the set $P$ 
and $\map{x}{ P }{ \Fr(P) }$ be the canonical embedding from $P$
into $\Fr(P)$. 
Let $I_P$ be the ideal of $\Fr(P)$ generated  the set
$\setof{ (x(p) \cdot x(q))  \symdiff  x(p) }{ p \leq q }$, 
that is by the set $\setof{ x(p) - x(q) }{ p \leq q }$. 
We set $\Be(P) = \Fr(P) / I_P$.  
We denote $x(p) / I_P$ by $x_p$. 
The map $p \mapsto x_p$ is one-to-one and 
$p \leq q$ in $P$ if and only if $x_p \leq x_q$ in $\Be(P)$.

We denote by \dfn{$\rfs{FS}(P)$}{$\rfs{FS}(P)$} the set of all final segments of $P$. 
So $\emptyset , P \in \rfs{FS}(P)$.
That is (up to characteristic function) the set of all increasing maps $\chi$
from $P$ into $\{0,1\}$. 
So $\rfs{FS}(P)$ is a closed subspace of $2^{\powerset(P)}$ 
(for the pointwise topology).
Notice that $\fourthple{ \rfs{FS}(P) }{ \cup }{ \cap }{ \tau }$
is a distributive lattice and that, 
with the pointwise topology $\tau_p$, the meet and the union operations are continuous. 
Trivially, for every $p \in P$ the set 
$a_p \eqdef \setof{ u \in \rfs{FS}(P) }{ p \in u }$ 
is a clopen final subset of $\rfs{FS}(P)$. 
It is easy to check that the map $x_p \mapsto a_p$ is extendable to a Boolean isomorphism 
from $\Be(P)$ onto $\Clop(\rfs{FS}(P))$. 
Hence we identify $\Ult(\Be(P))$ and $\rfs{FS}(P)$. 
(See Abraham, Bonnet, Kubi\'s and Rubin~\cite[\S2]{ABKR}.)

Note that every interval algebra is a poset algebra. 
\begin{prop}\label{PropDwieapul}
Every poset Boolean algebra contains a generating set that is closed and discrete with respect to the pointwise topology.
\end{prop}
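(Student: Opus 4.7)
My plan is to exhibit $G = \{x_p : p \in P\}$ itself as the required generating set; since $G$ generates $\Be(P)$ by construction, it remains to check that $G$ is both discrete and closed in $\tau_p$. Throughout I would work via the identification $\Be(P) \cong \Clop(\rfs{FS}(P))$ sending $x_p$ to $a_p = \setof{u \in \rfs{FS}(P)}{p \in u}$, so that $\tau_p$ becomes pointwise convergence of clopen subsets of $\rfs{FS}(P)$ regarded as functions into $\{0,1\}$.

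Discreteness is immediate: for each $p \in P$, both $\upclcone{p}$ and $\upclcone{p} \setminus \{p\}$ are members of $\rfs{FS}(P)$, and the basic open set $V^+_{\upclcone{p}} \cap V^-_{\upclcone{p} \setminus \{p\}}$ contains $x_p$ but no other generator, since membership would force $q \geq p$ together with $q \not> p$, hence $q = p$.

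For closedness, take $a \in \cl G$. Being clopen, $\wh a$ depends only on finitely many coordinates, say those in a finite $P_0 = \{p_1,\ldots,p_k\} \subseteq P$; equivalently $\wh a(u)$ is determined by the type $u \cap P_0 \in \rfs{FS}(P_0)$. I would apply the closure condition to the finite test set
\[
F = \{\upclcone{p_j},\, \upclcone{p_j} \setminus \{p_j\} : j \le k\} \,\cup\, \{\upclcone T : T \in \rfs{FS}(P_0)\} \subseteq \rfs{FS}(P),
\]
which produces $p_* \in P$ with $\wh a(u) = 1$ iff $p_* \in u$ for every $u \in F$. If $p_* = p_i \in P_0$, then the tests at the ultrafilters $\upclcone T$ force $\wh a(\upclcone T) = 1$ iff $p_i \in T$ (using that $T$ is a final segment of $P_0$ and $p_i \in P_0$); since both $\wh a$ and $a_{p_i}$ depend only on the type and agree on every $T \in \rfs{FS}(P_0)$, we conclude $a = x_{p_i} \in G$. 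If $p_* \notin P_0$, then $p_* \neq p_j$ for each $j$; writing $T_j' = \setof{p_l \in P_0}{p_l > p_j}$ and comparing the values of $\wh a$ at the two ultrafilters $\upclcone{p_j} \setminus \{p_j\}$ and $\upclcone T_j'$ (which share the same type $T_j'$), consistency forces that whenever $p_* \geq p_j$ there exists $p_l \in P_0$ with $p_j < p_l \leq p_*$. The finite set $P_* := \setof{p_j \in P_0}{p_* \geq p_j}$ would then have every element strictly below some other element of $P_*$, which is impossible unless $P_* = \emptyset$. But then $\wh a(\upclcone T) = 0$ for every $T$, forcing $a = 0$; and $0 \notin \cl G$, since at $u = P \in \rfs{FS}(P)$ every generator $x_p$ satisfies $p \in P$, leaving no $x_p$ to witness $\wh 0(P) = 0$.

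The main technical obstacle is this second case: ruling out a phantom witness $p_* \notin P_0$ requires both enriching the test set $F$ with the canonical ultrafilters $\upclcone T$ (pinning $\wh a$ down type-by-type) and exploiting the finiteness of $P_0$ (which comes from $\wh a$ being clopen) in the collapse argument on $P_*$; the trivial exclusion $0 \notin \cl G$ then closes the loop.
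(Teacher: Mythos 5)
Your proof is correct, and although it fixes on the same generating set $G = \setof{x_p}{p \in P}$ as the paper does, both halves are argued along different lines. For discreteness the paper isolates $a_p$ by passing to the complementary clopen ideal $\rfs{FS}(P) \setminus a_p$ and using that it has a maximum described by finitely many coordinates; your two-point test at $\upclcone{p}$ and $\upclcone{p}\setminus\{p\}$ reaches the same conclusion more directly and is, if anything, cleaner. The genuine divergence is in closedness: the paper identifies $G$ with the set of clopen \emph{prime filters} of the lattice $\rfs{FS}(P)$ (the nontrivial converse, that every clopen prime filter equals some $a_p$, is quoted from Lemma~2.9 of \cite{ABKR}) and then observes that failure of each prime-filter condition is witnessed by at most three points of $\rfs{FS}(P)$, so $G$ is cut out by conditions whose negations are open. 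You avoid this lattice-theoretic characterization entirely: you exploit the finite support $P_0$ of a clopen set $a \in \cl G$, test against the finitely many canonical points $\upclcone{T}$ for $T \in \rfs{FS}(P_0)$ together with $\upclcone{p_j}$ and $\upclcone{p_j}\setminus\{p_j\}$, and force the witnessing index $p_*$ into $P_0$; your no-maximal-element argument on $P_*$ correctly eliminates the phantom case, and the exclusion $0 \notin \cl G$ via the point $P \in \rfs{FS}(P)$ closes the remaining gap (note your test set automatically contains the empty final segment, which likewise rules out $a = 1$). The paper's route is shorter but leans on an external structural fact about poset algebras; yours is longer but self-contained and makes explicit exactly which finitely many ultrafilters pin down any pointwise limit of generators.
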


\begin{pf}

Let $\Be$ be a poset Boolean algebra over $P$. 
Note that $G \eqdef \setof{ a_p }{ p \in P }$ generates $\Be$. 
For $p \in P$, the set $a_p$ is a clopen prime filter of $\rfs{FS}(P)$. 
Conversely if $F \subseteq \rfs{FS}(P)$ is a clopen and prime filter,
then $F$ has a minimum of the form $ [p, \rightarrow)$
with $p \in P$, and thus  $F = a_{p}$ \cite[Lemma 2.9]{ABKR}. 

We show that $G$ is discrete in $\pair \Be {\tau_p}$. 
Fix $p \in P$.
So $J_p \eqdef \rfs{FS}(P) \setminus a_p$ is a clopen ideal of $\rfs{FS}(P)$ and thus $J_p$ has a maximum defined by a finite subset $\tau_p$ of $P$. 
Therefore $a_p$ is the unique clopen filter of $\rfs{FS}(P)$ belonging to the clopen set 
$\setof{ u \in \rfs{FS}(P) }
{ p \in u \text{\rm\ and } u \cap \tau_p = \emptyset }$ of $\rfs{FS}(P)$. 

Next $G$ is closed in $\pair \Be {\tau_p}$, because ``not being a prime filter in $\rfs{FS}(P)$" is witnessed by at most three points of $P$ 
(as a clopen set in $\rfs{FS}(P)$ is defined by finitely many points of $P$). 
Furthermore, recall that $G$ generates the Boolean algebra $\Be$. In particular, $|G| = |\Be|$.
\end{pf}

Next, let $\natM = \pair{M}{\wedge}$ be a semilattice.
We define the \dfn{semilattice algebra\/}{semilattice algebra} $\Be(M)$ 
as follows.
Let $\Fr(M)$ be the free Boolean algebra over the set $M$.
Recall that $x(p)$ denotes $x_p$.
Let $J_M$ be the ideal of $\Fr(M)$ generated  the set
$\setof{ x_p \cdot x_q  \symdiff  x_{p \wedge q} }{ p , q \in M }$. 
We set $\Be(M) \eqdef \Fr(M) / J_M$. 
We denote by $\rfs{Fil}(M)$ the set of all filters of $M$.
In particular $\emptyset \in \rfs{Fil}(M)$. 
Note that $\rfs{Fil}(M)$ corresponds (via characteristic functions) to the set of semilattice preserving  maps from $M$ into $\{0,1\}$.
Hence $\rfs{Fil}(M)$ is a closed subspace of $2^{\powerset(M)}$ 
(for the pointwise topology). 

For $p \in M$ we set $a_p \eqdef \setof{ u \in \rfs{Fil}(M) }{ p \in u }$.
It is easy to check that the map $x_p \mapsto a_p$ is extendable to a Boolean isomorphism 
from $\Be(M)$ onto $\Clop(\rfs{Fil}(M))$. 
Hence we identify $\Ult(\Be(M))$ and $\rfs{Fil}(M)$. 
Also $\fourthple {\rfs{Fil}(M)} \cap \emptyset  \tau$ 
is a compact $0$-dimensional semilattice and $\cap$ is continuous.

Let $\fourthple L \meet 0 \tau$ be a 
\dfn{compact $0$-dimensional semilattice}{}. 
Recall that, by the definition, the meet operation $\meet$ is continuous for the topology $\tau$. 
Since $\meet$ is continuous, $L$ has a smallest element $0$.
An element $a \in L$ is \dfn{compact}{compact} if, by definition, $a > 0$ and the following implication holds for every set $S \subs L$:
$$\sup S = a \implies (\exists\; F \in \fin S)\;\; \sup F = a.$$
This clearly justifies the name ``compact".
The property of being a compact element turns out to be equivalent to the fact that the filter $[a ,\rightarrow)$ is a clopen subset of $L$.
We denote by $K(L)$ the set of compact elements of $L$. 
So $0 \not\in K(L)$.
Then $\clop(L)$ is the Boolean algebra generated by 
$\setof{ \upclcone{p} }{ p \in K(L)}$. 
Also if $p$, $q$ are elements of $K(L)$ then 
either $\upclcone{p} \cap \upclcone{q}$ is empty 
or $\upclcone{p} \cap \upclcone{q} = \upclcone{s}$, 
where $s \eqdef p \vee^L q \in K(L)$. 
So $L = \ult(\Be(M))$ where $M = K(L) \cup \{0\}$
and for every $p,q \in K(L)$, either $p \meet^M q = s$ if 
$s \eqdef p \vee^L q$ exists,
nor $p \meet^M q = 0$ (that corresponds to $\upclcone{p} \cap \upclcone{q} = \emptyset$).

We refer to the book of Hofmann,  Mislove, and  Stralka \cite{HMS} for much more detailed information concerning compact semilattices.

A compact 0-dimensional semilattice $\triple L \meet 0$ is \dfn{modest}{modest (semi lattice)} if for every compact element $a \in L$ the set of its immediate predecessors is finite. 
So, in a modest semilattice $L$, for any compact element $a$, 
there exists a finite set $I_a$ such that $c < a$ for every $c \in I_a$, and for every $x < a$ there exists $c \in I_a$ such that $x \loe c$.
The notion of a modest semilattice was introduced recently in~\cite[\S3]{KMT}.

Basic non-trivial examples of modest semilattices are compact 0-dimensional linearly ordered spaces (every element has at most one immediate predecessor).
More general examples are median spaces (see for instance \cite{BH}, \cite{vV}), treated as semilattices.
In particular, every 0-dimensional compact distributive lattice is a modest semilattice (that corresponds to the space of ultrafilters of a poset algebra
\cite[Theorem 2.6]{ABKR}).

Given a subset $A$ of a partially ordered set, we denote by $\max A$ the set of all maximal elements of $A$. 
So $\max A$ can be empty.

\begin{tw}\label{ThmDwa}
Let $\Be$ be an infinite Boolean algebra such that the space $L = \ult(\Be)$ carries a structure of a modest topological semilattice.
Assume further that there exists a set of cardinality $< |\Be|$ which is topologically dense in $\max L$. 

Then for every regular cardinal $\lam \loe |\Be|$ there exists a closed discrete set in $\pair \Be {\tau_p}$ of cardinality $\lam$.

In particular, if $|\Be|$ is an uncountable regular cardinal, then for every generating set $G \subs \Be$ there exists $p\in \ult(\Be)$ such that $|p \cap G| = |\Be|$, that is $\Be$ is not $\kappa$-Corson.
\end{tw}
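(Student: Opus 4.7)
My plan is to exhibit a closed discrete set $H \subseteq \pair{\Be}{\tau_p}$ of cardinality $\lam$ inside the natural generating family $\setof{[a,\to)}{a \in K(L)}$ of $\Be$. Modestness of $L$ will handle discreteness, while the density of $D$ in $\max L$ will ensure closedness at the empty set; the ``In particular'' conclusion will then follow from Theorem~\ref{ThmJednicka}.

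I first use the density hypothesis to pick a ``base point''. Every clopen filter $[a,\to)$ meets $\max L$ by compactness and Zorn, and $D$ is dense in $\max L$, so for each $a \in K(L)$ there is $d \in D$ with $d \geq a$. Hence $K(L) = \bigcup_{d \in D}(K(L) \cap \downarrow d)$. Using $|K(L)| = |\Be|$, $|D| < |\Be|$, and $\lam \leq |\Be|$ regular, a routine pigeonhole argument produces $d^* \in D$ with $|K(L) \cap \downarrow d^*| \geq \lam$. Fix $\setof{a_\xi}{\xi<\lam} \subseteq K(L) \cap \downarrow d^*$ and let $H = \setof{[a_\xi,\to)}{\xi<\lam}$. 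For discreteness, modestness supplies the finite set $I_{a_\xi}$ of immediate predecessors of $a_\xi$ in $L$, and the basic neighborhood
\[
U_\xi \eqdef V^+_{a_\xi} \cap \bigcap_{c \in I_{a_\xi}} V^-_c
\]
isolates $[a_\xi,\to)$: any $[b,\to) \in U_\xi$ with $b \in K(L)$ must satisfy $b \leq a_\xi$ and $b \not\leq c$ for every $c \in I_{a_\xi}$, which forces $b = a_\xi$ by the defining property of $I_{a_\xi}$.

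Next I check that $H$ is closed in $\tau_p$. Take $c \in \Be \setminus H$; by the Stone duality $c$ is a clopen subset of $L$, and every non-empty clopen filter is of the form $[b,\to)$ for some $b \in K(L) \cup \{0\}$ (by compactness of the filter, its infimum belongs to it). This leaves four cases. If $c$ is a non-empty clopen non-filter, finitely many witnesses to the failure (either $x \leq y$ with $x \in c$, $y \notin c$, or $x, y \in c$ with $x \wedge y \notin c$) carve out a $\tau_p$-neighborhood of $c$ consisting of non-filters, disjoint from $H$. If $c = L$, then $V^+_0$ is a neighborhood of $L$ missing $H$, since each $a_\xi > 0$ implies $0 \notin [a_\xi,\to)$. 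If $c = \emptyset$, then $V^-_{d^*}$ is a neighborhood of $\emptyset$ missing $H$, since $a_\xi \leq d^*$ yields $d^* \in [a_\xi,\to)$. If $c = [b,\to)$ with $b \in K(L) \setminus \setof{a_\xi}{\xi<\lam}$, then $U_b$ from the discreteness step misses $H$.

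For the ``In particular'' conclusion, assume toward a contradiction that $\Be$ is $\kappa$-Corson with $\kappa = |\Be|$ uncountable regular. By Theorem~\ref{ThmJednicka}, $\pair{\Be}{\tau_p}$ is $\kappa$-Lindel\"of; but this is incompatible with a closed discrete subset of size $\kappa$, which the first part produces (cover each point by its isolating neighborhood and add the open complement of the set, and note that any subcover has size at least $\kappa$). Hence $\Be$ is not $\kappa$-Corson, and unpacking the definition yields that for every generating set $G \subseteq \Be$ there exists $p \in \ult(\Be)$ with $|p \cap G| \geq \kappa = |\Be|$, so $|p \cap G| = |\Be|$. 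The subtle point is closedness of $H$ at $c = \emptyset$, which is exactly where the density hypothesis on $\max L$ enters; everything else follows from modestness and Stone duality.
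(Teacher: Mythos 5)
Your proof is correct and follows essentially the same route as the paper: take the clopen filters $[a,\to)$ as the generating family, use modestness to isolate each of them, and use the density of $D$ in $\max L$ to find a point $d^*$ dominating $\lam$ many compact elements, with $V^-_{d^*}$ handling the accumulation at $\emptyset$. The only cosmetic difference is that the paper first shows the whole family of clopen filters is closed and then intersects it with the closed set $\setof{u}{d^* \in u}$, whereas you verify closedness of the selected subfamily directly by a case analysis; both arguments are equivalent.
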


\begin{pf}
We identify $\Be$ with the algebra of clopen subsets of $L$, so that we shall write $p \in a$ having in mind that $a\in \Be$ is an element of $p \in \ult(\Be)$.

We use some ideas from duality theory for semilattices.
Namely, denote by $G$ the set of all clopen filters of $L$.
Thus, $\emptyset \in G$ and nonempty elements of $G$ are of the form $[a, \rightarrow)$, where $a > 0$ is a compact element.
Straight from the definition of a compact element, knowing that $L$ is modest, it follows that for each compact element $a$ the filter $[a,\rightarrow)$ is isolated in $G$.
The only possibly non-isolated element of $G$ is the empty set $\emptyset$. 

Note that $G$ is closed in $\pair \Be {\tau_p}$, because ``not being a filter" is witnessed by at most three points of $L$ and $K(L) \cup \{0\}$ is closed under meet.
Furthermore, recall that $G$ generates the Boolean algebra $\Be$.
In particular, $|G| = |\Be|$. 

Fix $D \subs \max L$ such that $|D| < |\Be|$ and $D$ is dense in $\max L$.
Since $L$ is a compact space, for every $x\in L$ there is $y \in \max L$ such that $x \loe y$.
In particular, if $a \in L$ is a compact element, then $[a,\rightarrow) \cap \max L$ is nonempty and open in $\max L$, therefore there exists $d\in D$ such that $a \loe d$.
Fix a regular cardinal $\lam \leq |\Be|$; possibly replacing 
it by $|D|^+$, we assume that $|D| < \lam$.
By the previous remark, there exists $p \in D$ such that the interval $[0,p]$ contains at least $\lam$ compact elements.
Let $V_p = \setof{u \in G}{p \notin u}$.
Then $V_p$ is a neighborhood of $\emptyset$.
Thus $G \setminus V_p$ is a closed discrete set of cardinality at least $\lam$.

The second statement follows from Theorem~\ref{ThmJednicka}.
\end{pf}

Note that, under the assumptions of the theorem above, 
if $\max L$ is nonempty and finite, and if for every $x \in L$
there is $m \in \max L$ such that $x \leq m$,
then the set $G$ of all clopen filters is discrete and closed
with respect to the pointwise topology on $\Be$.  
As we have mentioned already, poset Boolean algebras satisfy the assumptions of Theorem~\ref{ThmDwa}.
On the other hand, for these algebras, we have improved the result in Proposition \ref{PropDwieapul}.

\subsection{Interval algebras}
\label{section-interval}

From the results of the previous section we obtain the following property of poset Boolean algebras, which might be of independent interest.

\begin{tw} 
\label{ThmTrojkaa}
Let $\Be$ be a poset Boolean algebra and let $\kappa$ be a regular uncountable cardinal such that $|\Be| \goe \kappa$.
Then for every generating set $G \subs \Be$ there exists an ultrafilter $p \in \ult (\Be)$ such that $|G \cap p| \goe \kappa$  and thus $\Be$ is not $\kappa$-Corson.

In particular, if $\abs{\Be}$ is a regular uncountable cardinal then for every generating set $G \subs \Be$ there exists an ultrafilter $p \in \ult (\Be)$ such that $\abs{ G \cap p } = \abs{\Be}$, that is $\Be$ is not $\kappa$-Corson.
\end{tw}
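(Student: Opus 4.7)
The plan is to derive the theorem as an immediate consequence of Proposition~\ref{PropDwieapul} and Theorem~\ref{ThmJednicka}. My opening observation is that the conclusion ``for every generating set $G \subs \Be$ there exists $p \in \ult(\Be)$ with $|G \cap p| \goe \kappa$'' is, by the very definition of $\kappa$-Corson, just a reformulation of ``$\Be$ is not $\kappa$-Corson''. So it suffices to rule out $\kappa$-Corsonness of $\Be$.

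First I will invoke Proposition~\ref{PropDwieapul} to obtain a generating set $G_0 \subs \Be$ that is closed and discrete in the pointwise topology $\pair \Be {\tau_p}$. The proof of that proposition moreover establishes $|G_0| = |\Be|$, and since $|\Be| \goe \kappa$ I may fix $D \subs G_0$ with $|D| = \kappa$. A routine check---every subset of a closed discrete set in a topological space is itself closed and discrete---yields that $D$ is closed and discrete in $\pair \Be {\tau_p}$.

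Then I argue by contradiction: suppose $\Be$ is $\kappa$-Corson. By Theorem~\ref{ThmJednicka} the space $\pair \Be {\tau_p}$ is $\kappa$-Lindel\"of. For each $d \in D$ pick an open neighborhood $U_d \ni d$ with $U_d \cap D = \sn d$; the family $\setof{U_d}{d \in D} \cup \sn{\Be \setminus D}$ is then an open cover of $\Be$ admitting no subcover of cardinality $<\kappa$, contradicting $\kappa$-Lindel\"ofness. Hence $\Be$ is not $\kappa$-Corson, which by the opening remark completes the proof of the main assertion. For the ``in particular'' case one sets $\kappa = |\Be|$; the resulting bound $|G \cap p| \goe |\Be|$, combined with $|G \cap p| \loe |G| \loe |\Be|$, forces equality.

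I do not foresee any substantive obstacle: the proof is a bookkeeping synthesis of two earlier results. The poset-algebra-specific strengthening in Proposition~\ref{PropDwieapul} provides a closed discrete generating set of full cardinality, and the Lindel\"of-type Theorem~\ref{ThmJednicka} converts $\kappa$-Corsonness into a topological constraint that such a set immediately violates. The only minor routine point is the elementary observation that closed discreteness is hereditary to subsets.
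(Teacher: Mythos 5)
Your proposal is correct and follows essentially the same route as the paper: a contradiction between the $\kappa$-Lindel\"of property of $\pair \Be {\tau_p}$ from Theorem~\ref{ThmJednicka} and the closed discrete generating set of full cardinality from Proposition~\ref{PropDwieapul}. You merely spell out the routine details (passing to a subset of size $\kappa$ and exhibiting the bad open cover) that the paper leaves implicit.
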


\begin{pf}
Suppose the statement above is not true.
Then $\Be$ is $\kappa$-Corson, therefore by Theorem~\ref{ThmJednicka} every open cover of $\pair \Be {\tau_p}$ has a subcover of cardinality $< \kappa$.
On the other hand, by Proposition~\ref{PropDwieapul}, the algebra $\Be$ contains a closed discrete set of cardinality $\kappa$.
This is a contradiction.
\end{pf}

A variant of the result~\ref{ThmTrojkaa}, involving a topologically dense set of ultrafilters is not true.
A counterexample is any uncountable free Boolean algebra.
It turns out however that for some interval algebras one can still prove something, generalizing Proposition~\ref{lemma-jkljlkjlk}:

\begin{tw}
\label{thm-valdivia-interval}
Assume $C$ is a chain of regular cardinality $\kappa > \aleph_0$ such that
each non-trivial interval of $C$ has cardinality $\kappa$ and
no subset of $C$ is order isomorphic to $\kappa$ or its inverse.
Then for every generating set $G$ of the interval algebra $\Be(C)$
the set
$$S = \setof{p \in \ult(\Be(C))}{\abs{G \cap p} < \kappa}$$
is nowhere dense in $\ult(\Be(C))$.
\end{tw}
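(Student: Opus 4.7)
The plan is to argue by contradiction, reducing the statement to a single preliminary claim: under the hypotheses, $\Be(C)$ is not $\kappa$-Valdivia.

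To establish the preliminary claim I would combine Theorem~\ref{Thmweteig} with Theorem~\ref{ThmTrojkaa}. By Theorem~\ref{ThmTrojkaa} the poset Boolean algebra $\Be(C)$, of regular cardinality $\kappa$, is not $\kappa$-Corson, so by the contrapositive of the second part of Theorem~\ref{Thmweteig} it suffices to show that $\Be(\kappa)$ is not a homomorphic image of $\Be(C)$. If it were, then $\kappa+1$ would embed as a closed subspace of $\ult(\Be(C))$, and its unique limit point $p^{*}$ would have character $\ge \kappa$ in $\ult(\Be(C))$. Since $\ult(\Be(C))$ is a linearly ordered topological space, one side of $p^{*}$ has cofinality $\ge \kappa$, yielding a strictly monotone $\kappa$-sequence of ultrafilters $(q_\xi)_{\xi<\kappa}$ converging to $p^{*}$ from that side. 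For each successive pair $q_\xi, q_{\xi+1}$ one picks a separating $e_\xi \in \Be(C)$ and selects among its finitely many endpoints a $y_\xi \in C$ lying strictly between $q_\xi$ and $q_{\xi+1}$ in the natural order; then $(y_\xi)_{\xi<\kappa}$ is a strictly monotone $\kappa$-sequence in $C$, contradicting the absence of subsets of order type $\kappa$ or $\kappa^{*}$.

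Suppose now, for contradiction, that $S:=\{p\in\ult(\Be(C)) : |G\cap p|<\kappa\}$ is not nowhere dense; then $S$ is topologically dense in some non-empty basic clopen $[b]$ with $b\in\Be(C)\setminus\{0\}$. Let $C_b$ be the sub-chain of $C$ consisting of points lying in $b$. Because $b$ is a non-empty finite union of half-open intervals and every non-trivial interval of $C$ has cardinality $\kappa$, one has $|C_b|=\kappa$; moreover $C_b$ clearly inherits both the $\kappa$-interval condition and the absence of subsets of order type $\kappa$ or $\kappa^{*}$. Standardly, $\Be(C)\rest b \cong \Be(C_b)$ with Stone space $[b]$, and $G_b := \{g\cap b : g\in G\}\setminus\{0\}$ generates $\Be(C_b)$.

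The key observation in the localization is that density of $S$ in $[b]$ forces every fibre $\{g\in G : g\cap b = g_b\}$, $g_b\in G_b$, to have cardinality $<\kappa$: otherwise the non-empty clopen $[g_b]\subseteq[b]$ would consist entirely of $p$ with $|G\cap p|\ge\kappa$, missing $S$ and contradicting density. Combined with the regularity of $\kappa$, this yields $|G\cap p|<\kappa \iff |G_b\cap p|<\kappa$ for every $p\in[b]$, so $S\cap[b]$, transported into $\ult(\Be(C_b))$, is exactly $\{p : |G_b\cap p|<\kappa\}$ and is dense. Thus $\Be(C_b)$ is $\kappa$-Valdivia, witnessed by $G_b$ --- directly contradicting the preliminary claim applied to $C_b$, which still satisfies the theorem's hypotheses. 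The main obstacle is really only the preliminary claim, and within it the order-theoretic extraction of a strictly monotone $\kappa$-sequence in $C$ from a closed copy of $\kappa+1$ in $\ult(\Be(C))$; the localization itself is mechanical once that step is in hand.
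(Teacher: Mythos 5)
Your proof is correct and follows essentially the same route as the paper's: localize to a piece of the Stone space on which $S$ is dense, observe that this piece is $\kappa$-Valdivia, use the no-copy-of-$\kappa$ hypothesis together with Theorem~\ref{Thmweteig} to upgrade it to $\kappa$-Corson, and contradict Theorem~\ref{ThmTrojkaa}. The only (cosmetic) differences are that the paper invokes the local-tightness half of Theorem~\ref{Thmweteig} directly rather than the homomorphic-image half with a hand-rolled extraction of a monotone $\kappa$-sequence, and that your fibre-counting ``key observation'' is superfluous, since the inequality $|G_b\cap p|\le|G\cap p|$ is all that is needed to see that $S\cap[b]$ witnesses the $\kappa$-Valdivia property of the relative algebra.
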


\begin{pf}
Suppose $G \subs \Be(C)$ is a counterexample and let $K = \ult(\Be(C))$.
We can find a non-trivial interval $[s,t] \subs K$ such that the set $S\cap[s,t]$ is topologically 
is dense in $[s,t]$.
We conclude that the space $L \eqdef [s,t]$ is $\kappa$-Valdivia compact.
By the assumption on $C$, we have that $\tight(p,A) < \kappa$ for every $A \subs K$ and for every $p \in \cl(A)$.
In other words, $K$ and thus $L$, 
has local tightness $<\kappa$.
By Theorem~\ref{Thmweteig}, it follows that $L$ is $\kappa$-Corson.
On the other hand, the weight of $L$ is $\kappa$ because all non-trivial intervals of $C$ have size $\kappa$ 
This contradicts Theorem~\ref{ThmTrojkaa}.
\end{pf}

Recall that for a set $A \subs \ult (\Be)$, in \S\ref{Corson-like-BA}, (\ref{star}), we have seen that: 
$$\cl (A) = \setof{p\in \ult (\Be)}{p \subs \bigcup A}.$$

\begin{lm}\label{LeFrsrw}
Let $\Be$ be a Boolean algebra with a generating set $G$ and let $\lam$ be an infinite cardinal.
Let
$$D_\lam = \setof{p\in \ult (\Be)}{ |G \cap p| \loe \lam }.$$
Then
\begin{enumerate}
	\item[{\rm(a)}] The closure of every subset of $D_\lam$ of cardinality $\loe \lam$ is contained in $D_\lam$. 
	\item[{\rm(b)}] The tightness of $D_\lam$ does not exceed $\lam$.
\end{enumerate}
\end{lm}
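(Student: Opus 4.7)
The plan is to prove (a) directly from the closure formula $(\star)$, and (b) by a closing-off chain construction of length $\omega$, in the spirit of the standard argument showing that $\Sigma$-products have small tightness.

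Part (a) is immediate from $(\star)$: if $A\subs D_\lam$ with $|A|\loe\lam$ and $p\in\cl A$, then $p\subs\bigcup A$, hence $p\cap G\subs\bigcup_{q\in A}(q\cap G)$, which is a union of $\loe\lam$ sets each of size $\loe\lam$, giving $|p\cap G|\loe\lam$ and so $p\in D_\lam$.

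For (b), given $p\in D_\lam$ with $p\in\cl A$ and $A\subs D_\lam$, I would recursively construct chains $S_0\subs S_1\subs\cdots$ in $G$ and $B_0\subs B_1\subs\cdots\subs A$ with $|S_n|,|B_n|\loe\lam$, starting from $S_0\eqdef G\cap p$. At stage $n+1$, for every finite $F\subs S_n$ use $p\in\cl A$ applied to the basic clopen neighborhood $\setof{q\in\ult(\Be)}{q\cap F=p\cap F}$ to pick $q_F\in A$ with $q_F\cap F=p\cap F$; then let $B_{n+1}=B_n\cup\setof{q_F}{F\in\fin{S_n}}$ and $S_{n+1}=S_n\cup\bigcup_{q\in B_{n+1}}(q\cap G)$. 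Since $A\subs D_\lam$ forces $|q\cap G|\loe\lam$ for every $q$ we pick, elementary cardinal arithmetic yields $|S_{n+1}|,|B_{n+1}|\loe\lam$. Put $B\eqdef\bigcup_n B_n$ and $S^*\eqdef\bigcup_n S_n$; then $|B|\loe\lam$ and $B\subs A$.

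To verify $p\in\cl B$, by $(\star)$ it suffices to show $p\subs\bigcup B$. Given $a\in p$, write $a$ as a Boolean combination of generators in some finite set $F\subs G$, and set $F'=F\cap S^*$; for $n$ large enough, $F'\subs S_n$, so $q_{F'}\in B_{n+1}$ satisfies $q_{F'}\cap F'=p\cap F'$. The crux---which I expect to be the main obstacle---is to show that $q_{F'}$ also agrees with $p$ on $F\setminus S^*$: on one hand, $q_{F'}\cap G\subs S_{n+1}\subs S^*$ by construction, so $q_{F'}$ contains nothing in $F\setminus S^*$; on the other hand, $S^*\supseteq S_0=G\cap p$, so $p$ contains nothing in $F\setminus S^*$ either. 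Therefore $q_{F'}\cap F=p\cap F$, forcing $a\in q_{F'}\in B$, as required. The closing-off step $S_n\mapsto S_{n+1}$ is the heart of the argument; it uses the $D_\lam$ hypothesis in an essential way to keep $q_F$'s from ``escaping'' into generators lying outside $S^*$, which would otherwise destroy the approximation of $p$ on the ``$-g$'' part of the filter base.
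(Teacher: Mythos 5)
Your proof is correct. Part (a) coincides with the paper's argument (both are the one-line computation from the closure formula $(\star)$). For part (b) you take a genuinely different route: the paper fixes an elementary submodel $M \rloe H(\theta)$ with $p, A \in M$, $\lam+1 \subs M$ and $|M| = \lam$, and shows $p \in \cl(A \cap M)$ by a contradiction argument -- if some $u = \prod S \cdot -\sum T \in p$ missed $\bigcup A$, then replacing $T$ by $T' = T \cap M$ yields $u' \in p \cap M$, elementarity produces $q \in A \cap M$ containing $u'$, and the fact that $G \cap q \subs M$ forces $T \cap q = T' \cap q = \emptyset$, contradicting $u \notin q$. Your $\omega$-length closing-off chain $(S_n, B_n)$ is precisely the combinatorial content that the elementary submodel packages: your set $S^*$ plays the role of $G \cap M$, your witnesses $q_F$ play the role of the points of $A \cap M$ supplied by elementarity, and the key observation in both proofs is identical -- the chosen witnesses cannot ``see'' any generator outside the accumulated small set, because they lie in $D_\lam$ and their $G$-traces have been swallowed at the next stage. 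What your version buys is self-containedness: it needs no set-theoretic apparatus and makes explicit which closure properties of $S^*$ are actually used ($G \cap p \subs S^*$ and $q \cap G \subs S^*$ for all chosen $q$). What the paper's version buys is brevity and uniformity with the rest of the paper, which runs entirely on stable elementary submodels; it also hands you the witnessing subset in the clean form $A \cap M$. This is exactly the trade-off the authors themselves acknowledge after Corollary~\ref{WNsduhsd}, where they note that extracting the closing-off arguments from the submodel proofs yields more direct, if longer, arguments.
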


\begin{pf}
(a) 
follows from the following fact.
Assume $p \in \cl (A)$ with $A\subs D_\lam$ and $|A| \loe \lam$.
Then
$$G \cap p \subs G \cap \bigcup A = \bigcup_{x\in A}(G \cap x),
$$
therefore $|G \cap p| \loe \lam \cdot |A| \loe \lam$.

(b) Assume $p \in \cl (A) \setminus A$, where $A \subs D_\lam$.
Fix a big enough regular cardinal $\theta$ and fix an elementary substructure $M$ of $\pair {H(\theta)}\in$ such that $p, A \in M$, $\lam+1 \subs M$ and $|M| = \lam$.
We claim that $p \in \cl(A \cap M)$.

Suppose otherwise and choose finite sets $S, T \subs G$ such that, letting
$$u := \hbox{$\prod$} S \cdot - \hbox{$\sum T$},$$
we have that $u \in p$ and $u \notin \bigcup A$. 
Note that $S \subs G \cap p \subs M$, because $p\in M$, $|G \cap p| \leq \lambda$ and $M$ contains all elements of sets of cardinality $\loe \lam$.
Let $T' = T \cap M$ and $u' = \prod S \cdot - \sum T'$. 
Then $u' \in p$ and also $u' \in M$, because $u \leq u'$  
and $S, T' \subseteq G \cap p \subseteq M$. 
Note that $$M \models p \subs \bigcup A,$$
therefore, by elementarity,  using the fact that $u' \in p \cap M$, there is $q \in A \cap M$ 
such that $u' \in q$. 
In particular $S \subs q$ and $T' \cap q = \emptyset$,   
because $q$ is an ultrafilter of $\Be$ containing $u'$.
On the other hand, since $u \notin \bigcup A$, we have 
$u \notin q$, therefore $T \cap q \nnempty$, because $S \subs q$.
Finally, $T \cap q \subs G \cap q \subs M$, 
and thus $T' \cap q = T \cap q \neq \emptyset$, a contradiction.
\end{pf}

\begin{tw}\label{ThmTrojka}
Let $\Be$ be an interval Boolean algebra and let $\lam$ be an infinite cardinal such that $|\Be| > \lam^+$.
Then for every generating set $G \subs \Be$, for every topologically dense set $D \subs \ult (\Be)$, there exists $p \in D$ such that $|G \cap p| > \lam$.
\end{tw}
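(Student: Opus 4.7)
I would argue by contradiction: suppose $|G \cap p| \leq \lam$ for every $p \in D$, which in the notation of Lemma~\ref{LeFrsrw} means $D \subs D_\lam$. The plan is to upgrade this to the stronger conclusion $D_\lam = K$, where $K = \ult(\Be)$, and then to collide the resulting $\lam^+$-Corson property of $\Be$ with the closed discrete generating set provided by the poset-algebra structure of an interval algebra.

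The key first step is to verify that $D_\lam$ is closed in $K$. Given $p \in \cl(D_\lam)$, the argument of Lemma~\ref{LeFrsrw}(b), read for $A = D_\lam$ (its proof never uses that $p$ itself lies in $D_\lam$), produces $B \subs D_\lam$ with $|B| \leq \lam$ and $p \in \cl(B)$; then Lemma~\ref{LeFrsrw}(a) forces $p \in \cl(B) \subs D_\lam$. Hence $D_\lam$ is closed, and the topological density of $D$ yields $K = \cl(D) \subs D_\lam$; that is, $|G \cap p| \leq \lam$ for \emph{every} $p \in \ult(\Be)$. Equivalently, $G$ witnesses that $\Be$ is $\lam^+$-Corson, with $\lam^+$ an uncountable regular cardinal.

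From here the contradiction is immediate. Theorem~\ref{ThmJednicka} applied to $\kappa = \lam^+$ says that $\pair{\Be}{\tau_p}$ is $\lam^+$-Lindel\"of, so every open cover of it has a subcover of cardinality at most $\lam$. Since every interval algebra is a poset Boolean algebra, Proposition~\ref{PropDwieapul} supplies a generating set $G' \subs \Be$ which is closed and discrete in $\pair{\Be}{\tau_p}$; because $G'$ generates the infinite algebra $\Be$, necessarily $|G'| = |\Be|$. But any closed discrete subspace of a $\lam^+$-Lindel\"of space has cardinality at most $\lam$, so $|\Be| \leq \lam$, contradicting $|\Be| > \lam^+$. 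The only delicate point, as I see it, is the ``external'' reading of Lemma~\ref{LeFrsrw}(b): its proof yields a closing-off statement valid for all $p \in \cl(A)$ and not only for $p \in D_\lam$, and this is exactly what turns (a) and (b) into the closedness of $D_\lam$ that drives the entire argument.
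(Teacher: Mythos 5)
There is a genuine gap, and it is fatal to the whole strategy. Your pivotal claim is that $D_\lam$ is closed in $K=\ult(\Be)$, justified by an ``external'' reading of Lemma~\ref{LeFrsrw}(b) for points $p\in\cl(D_\lam)$ not assumed to lie in $D_\lam$. But the proof of (b) does use that $p\in D_\lam$: the step ``$S \subs G\cap p \subs M$'' is derived precisely from $|G\cap p|\loe\lam$ together with $p,G\in M$ and $\lam+1\subs M$; without it $S$ need not be contained in $M$, the element $u'$ need not lie in $M$, and the elementarity argument collapses. And the conclusion you want is simply false in general: take $\Be=\Fr(S)$ with $|S|=\lam^{++}$ and $G=S$. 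The ultrafilter $p_1$ generated by $S$ lies in $\cl(D_\lam)$ (indeed the finitely-supported ultrafilters are dense), yet for any $B\subs D_\lam$ with $|B|\loe\lam$ one has $\bigl|\bigcup_{q\in B}(q\cap S)\bigr|\loe\lam<|S|$, so some $s\in S$ separates $p_1$ from $B$ and $p_1\notin\cl(B)$. So $D_\lam$ is dense but not closed, and no small set witnesses $p_1\in\cl(D_\lam)$.

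The deeper problem is that after establishing ``$D_\lam$ closed'' you never use the interval-algebra hypothesis again: the endgame (Theorem~\ref{ThmJednicka} plus the closed discrete generating set of Proposition~\ref{PropDwieapul}) applies to every poset algebra, and free Boolean algebras are poset algebras. Since $\Fr(\lam^{++})$ is $\aleph_0$-Valdivia, the statement of Theorem~\ref{ThmTrojka} fails for it --- the paper points this out immediately before the theorem --- so any correct proof must exploit the linear order in an essential way. The paper does this by showing that under your standing assumption the ordinal $\lam^{++}$ cannot embed into the generating chain $C$ (via a pressing-type argument with points $p_\xi\in D$, where Lemma~\ref{LeFrsrw}(b) is legitimately applied to $p_\xi\in D_\lam$), hence $\ult(\Be)$ has tightness $\loe\lam^+$; then Theorem~\ref{Thmweteig} upgrades $\lam^{++}$-Valdivia to $\lam^{++}$-Corson, and Theorem~\ref{ThmTrojkaa} gives the contradiction. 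Your final Lindel\"of-versus-discrete collision is fine as far as it goes, but the route to ``$\Be$ is $\lam^+$-Corson'' (or even $\lam^{++}$-Corson) cannot go through closedness of $D_\lam$.
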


\begin{pf}
Suppose $G \subs \Be$ is a generating set and $D \subs \ult (\Be)$ is a topologically dense set such that $|G \cap p| \loe \lam$ for every $p \in D$.
We shall show that the tightness of $\ult (\Be)$ does not exceed $\lam^+$.
After that, using Theorem~\ref{Thmweteig}, we deduce that $\Be$ is $\lam^{++}$-Corson and finally, by Theorem~\ref{ThmTrojkaa}, 
we conclude that $|\Be| \loe \lam^+$. 
This is a contradiction.

Thus, it remains to show that the tightness of $\ult (\Be)$ is $\loe \lam^+$. 
This is equivalent to the fact that whenever $C$ is a generating chain for $\Be$ then the ordinal $\lam^{++}$ does not embed into $C$.
Suppose otherwise and choose a strictly increasing monotonic sequence $\sett{c_\xi}{\xi < \lam^{++}} \subs C$. 
We may assume that 
$c_0 = 0^{\natB}$ and that $\sett{c_\xi}{\xi < \lam^{++}}$ is strictly increasing. 

For each $\xi < \lam^{++}$ choose $p_\xi \in D$ so that $c_{\xi+1} \in p_\xi$ and $c_\xi \notin p_\xi$.
Let $p \in \ult( \Be)$ be an ultrafilter containing the set
$$\setof{- c_\xi}{\xi < \lam^{++}}
\cup
\setof{ c \in C }{ c > c_\xi \text{ for every } \xi < \lam^{++} }.
$$
This actually defines $p$ uniquely. 
Note that for every $a \in p$, there is $\alpha(a) < \lam^{++}$ such that $a \in p_\xi$ for every $\xi > \alpha(a)$. 
On the other hand, given $\al < \lam^{++}$, we have that $p \notin \cl (\setof{p_\xi}{\xi < \al})$, which is witnessed by the fact that $- c_\alpha \in p$.

Now observe that $|G \cap p| > \lam^+$.
Indeed, otherwise by Lemma~\ref{LeFrsrw}(b), there would exist $S \subs \lam^{++}$ of cardinality $\loe \lam^+$ such that $p \in \cl (\setof{p_\xi}{\xi \in S})$, which is impossible by the regularity of $\lam^{++}$.

Choose $H \subs G \cap p$ such that $|H| = \lam^+$.
For each $h \in H$ find $\al(h) < \lam^{++}$ such that $h \in p_\xi$ for $\xi \goe \al(h)$.
Let $\beta = \sup_{h \in H} \al(h)$.
Then $\beta < \lam^{++}$ and $H \subs G \cap p_\beta$. 

On the other hand, $p_\beta \in D$, which means that  $|G\cap p_\beta| \loe \lam$, a contradiction.
This completes the proof. 
\end{pf}

As a consequence of previous results, we obtain the following:

\begin{tw}
\label{thm-uihkjvv}
Let $\Be$ be a complete Boolean algebra of an uncountable regular cardinality $\kappa$.
Then $\Be$ is not $\kappa$-Corson, that is, given a generating set $G \subs \Be$ there exists an ultrafilter $p$ such that $\abs {p \cap G} = \abs{\Be}$. 
\end{tw}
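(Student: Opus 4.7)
The plan is to embed the free Boolean algebra $\Fr(\kappa)$ into $\Be$ and then combine the fact that subalgebras of $\kappa$-Corson algebras are $\kappa$-Corson with the observation that $\{0,1\}^\kappa$ is not $\kappa$-Corson.

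First I would invoke the classical Balcar--Fran\v ek theorem, which states that every infinite complete Boolean algebra contains an independent subfamily of cardinality equal to its own. Applied to our $\Be$, this produces an independent set $\setof{a_\al}{\al<\kappa}\subs\Be$, and the subalgebra generated by this family is isomorphic to $\Fr(\kappa)$.

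Next I would observe that $\ult(\Fr(\kappa))$ is homeomorphic to the Cantor cube $\{0,1\}^\kappa$. By Corollary~\ref{cor-ieyzuirzeuirez}(2) this cube is not $\kappa$-Corson; the key obstruction is Proposition~\ref{fact-uiuio}, which shows that any $\kappa$-Corson compactum must contain a $G_\kappa$-point, whereas $\{0,1\}^\kappa$ evidently has none. By Corollary~\ref{cor-generation}, this forces $\Fr(\kappa)$ itself to fail to be $\kappa$-Corson.

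Finally I would conclude by contradiction: if $\Be$ were $\kappa$-Corson, then by Theorem~\ref{Thmjgwrgo} its subalgebra isomorphic to $\Fr(\kappa)$ would also be $\kappa$-Corson, contradicting the previous step. The main external input, and arguably the hard step, is the Balcar--Fran\v ek theorem; everything else is a routine assembly of tools already established in the paper. One could instead try to argue more directly by exhibiting, in the pointwise topology $\tau_p$, a closed discrete subset of $\Be$ of cardinality $\kappa$ (using Theorem~\ref{ThmJednicka}), but constructing such a set without an independent family of size $\kappa$ does not seem easier than quoting Balcar--Fran\v ek outright.
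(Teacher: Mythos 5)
Your proposal is correct and follows essentially the same route as the paper: invoke Balcar--Fran\v ek to embed $\Fr(\kappa)$ into $\Be$, note that $\Fr(\kappa)$ is not $\kappa$-Corson, and conclude via the closure of $\kappa$-Corson algebras under subalgebras (Corollary~\ref{Thmjgwrgo}). The only cosmetic difference is that you justify the non-$\kappa$-Corsonness of $\Fr(\kappa)$ via the Cantor cube and $G_\kappa$-points (Corollary~\ref{cor-ieyzuirzeuirez}), whereas the paper cites Theorem~\ref{ThmTrojkaa}; both are valid.
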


\begin{pf}
Let $\kappa = \abs{\Be}$.
By the Theorem of Balcar and 
Franek~\cite{BF}, the free Boolean algebra $\Fr(\kappa)$ embeds into $\Be$.
Obviously,  $\Fr(\kappa)$ is not $\kappa$-Corson (this also follows from Theorem~\ref{ThmTrojkaa}).
Finally, by Corollary~\ref{Thmjgwrgo}, we conclude that $\Be$ is not $\kappa$-Corson.
\end{pf}

\begin{tw}
\label{thm-2.14}
\label{thm-4.17}
Assume $\Be$ is an infinite Boolean algebra with the countable separation property.
Let $\kappa$ be an uncountable regular cardinal. 
If $\Be$ is $\kappa$-Valdivia then it is $\kappa$-Corson.
\end{tw}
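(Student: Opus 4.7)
The plan is to argue by contradiction, leveraging the second half of Theorem~\ref{Thmweteig}. Assume $\Be$ is $\kappa$-Valdivia, has the countable separation property, and fails to be $\kappa$-Corson; then Theorem~\ref{Thmweteig} places a closed topological copy of $\kappa+1$ inside the Stone space $K \eqdef \ult(\Be)$. Since $\kappa > \aleph_0$, this copy contains the ordinal $\omega$, realized as the limit of $(n)_{n < \omega}$, so $K$ carries a non-trivial convergent sequence $p_n \to p$ with pairwise distinct $p_n$, all distinct from $p$.

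Next, I would show that this convergent sequence is incompatible with the countable separation property. Stone-dually, countable separation asserts that any two disjoint open $F_\sigma$ subsets of $K$ admit a clopen separator; this is the standard condition that $K$ be a compact zero-dimensional $F$-space. Using zero-dimensionality and Hausdorffness of $K$, I would inductively build a subsequence $(p_{n_k})_k$ together with pairwise disjoint clopen sets $D_k \ni p_{n_k}$ with $p \notin D_k$; at each step, $\bigcup_{i \leq k} D_i$ is a clopen set missing $p$, so only finitely many further terms of the convergent sequence lie inside it, which leaves room to continue the construction.

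Finally, partition by parity: set $U = \bigcup_k D_{2k}$ and $V = \bigcup_k D_{2k+1}$. These are disjoint open $F_\sigma$ subsets of $K$, and by countable separation there is a clopen $W$ with $U \subs W$ and $V \cap W = \emptyset$. Since $W$ is closed and $p_{n_{2k}} \to p$, we get $p \in \overline{U} \subs W$; symmetrically, $p \in \overline{V} \subs K \setminus W$, which is the desired contradiction. The only mildly subtle step is the bookkeeping in the inductive construction of the $D_k$; conceptually, the whole argument is the classical observation that a compact $F$-space carries no non-trivial convergent sequences, applied via Stone duality to the copy of $\kappa+1$ supplied by Theorem~\ref{Thmweteig}.
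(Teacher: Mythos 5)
Your proposal is correct and follows essentially the same route as the paper: the paper also invokes Theorem~\ref{Thmweteig} to get $\Be(\kappa)$, hence $\Be(\omega+1)$, as a homomorphic image (dually, a closed copy of $\kappa+1$ and thus a non-trivial convergent sequence in $\ult(\Be)$), and declares this incompatible with the countable separation property. The only difference is that you spell out the final step --- the classical ``compact $F$-spaces have no non-trivial convergent sequences'' argument via the even/odd splitting of pairwise disjoint clopen sets --- which the paper leaves implicit.
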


\begin{pf}
Otherwise, by Theorem~\ref{Thmweteig}, $\Be$ has a homomorphism onto the interval algebra $\Be(\kappa)$ and thus onto $\Be(\omega+1)$, which contradicts the fact that $\Be$ has the countable separation property.
\end{pf}


\section{Concluding remarks and open questions}
\label{question}

\begin{pyt}
Let $K$ be a $\kappa$-Corson space. 
Is its hyperspace $H(K)$ $\kappa$-Corson?
Here $H(K)$ is the space of all nonempty compact subsets endowed with the Vietoris topology.
\end{pyt}

Negative answer has been found by the Referee; the arguments come from Bell~\cite{Bell}, although the following claim probably belongs to the folklore.

\begin{prop}
	Let $\kappa$ be an uncountable cardinal and let $A(\kappa)$ denote the one-point compactification of the discrete space of cardinality $\kappa$. Then $H(A(\kappa))$ contains a copy of the Cantor cube $2^\kappa$.
\end{prop}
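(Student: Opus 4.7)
The plan is to exhibit an explicit topological embedding $\map{\phi}{2^\kappa}{H(A(\kappa))}$. Write $A(\kappa) = \setof{x_\alpha}{\alpha < \kappa} \cup \{\infty\}$ with each $x_\alpha$ isolated and neighborhoods of $\infty$ being exactly the cofinite sets containing $\infty$. Every subset of $A(\kappa)$ that contains $\infty$ is automatically compact, since any open cover yields one open set covering $\infty$ together with all but finitely many isolated points. This suggests the map
$$\phi(f) \eqdef \{\infty\} \cup \setof{x_\alpha}{f(\alpha) = 1}, \quad f \in 2^\kappa,$$
which takes values in $H(A(\kappa))$ and is visibly injective: if $f(\alpha) = 1$ and $g(\alpha) = 0$ then $x_\alpha \in \phi(f) \setminus \phi(g)$.

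The key step will be continuity of $\phi$, and I would verify it by checking the preimages of the two kinds of subbasic Vietoris open sets. For a subbasic set of the form $\setof{K}{K \cap U \neq \emptyset}$ with $U$ open in $A(\kappa)$: if $\infty \in U$ the preimage is all of $2^\kappa$; otherwise $U$ consists of isolated points and the preimage equals $\bigcup_{x_\alpha \in U} \setof{f}{f(\alpha) = 1}$, a union of basic open sets. For the complementary kind of subbasic set $\setof{K}{K \subseteq U}$: because $\infty \in \phi(f)$ for every $f$, the preimage is empty unless $\infty \in U$; in that case $U$ is cofinite with finite complement $F$, and the preimage reduces to the \emph{finite} intersection $\bigcap_{x_\alpha \in F} \setof{f}{f(\alpha) = 0}$, which is basic open in $2^\kappa$.

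To finish, I would invoke that $2^\kappa$ is compact, $H(A(\kappa))$ is Hausdorff (the Vietoris hyperspace of a compact Hausdorff space is compact Hausdorff), and a continuous injection from a compact space into a Hausdorff space is automatically a topological embedding. Hence $\phi$ embeds $2^\kappa$ into $H(A(\kappa))$.

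No step presents a genuine obstacle; the only subtle point to keep in mind is the continuity check for subbasic sets $\setof{K}{K \subseteq U}$. It succeeds precisely because the one-point-compactification structure forces such a $U$ (containing $\infty$) to be cofinite, which is exactly what makes the preimage depend on only finitely many coordinates and hence be open in the product topology on $2^\kappa$.
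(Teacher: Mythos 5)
Your proof is correct and uses exactly the same map as the paper, namely $x \mapsto x \cup \{\infty\}$ viewed as a map from the Cantor cube into the hyperspace; the paper simply asserts that this is ``easily seen'' to be a homeomorphic embedding, whereas you supply the routine verification (continuity against the two kinds of subbasic Vietoris sets, plus the compact-to-Hausdorff argument). No discrepancy to report.
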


\begin{pf}
	Define $\map \phi {\Pee(\kappa)}{H(A(\kappa))}$ by setting $\phi(x) = x \cup \sn \infty$, where $\infty$ denotes the unique accumulation point of $A(\kappa)$. It is easily seen that $\phi$ is a homeomorphic embedding, when $\Pee(\kappa)$ is viewed as the Cantor cube $2^\kappa$.
\end{pf}

Note that $A(\kappa)$ is Corson (in particular, $\kappa$-Corson), while $2^\kappa$ is not $\kappa$-Corson, due to Corollary~\ref{cor-ieyzuirzeuirez}.

The following natural question has been recently answered in the negative by Plebanek~\cite{Ple}.

\begin{pyt}
Let $\Be$ be a $\sig$-complete Boolean algebra of cardinality $\kappa > \aleph_0$.
Is it possible that $\Be$ is $\kappa$-Corson?
\end{pyt}

\koment{
\begin{pyt}[Kalenda]
Let $\Be = \Fr(\kappa)$, the free Boolean algebra with $\kappa > \aleph_0$ generators.
Assume $\kappa > \cf \kappa$.
Is it possible that $\Be$ is $\kappa$-Corson?
\end{pyt}
}

In the next question we consider Boolean algebras with the pointwise topology.

\begin{pyt}
(1)
Characterize Boolean algebras for which there is a discrete and closed set of generators.

(2)
Characterize Boolean algebras for which the Lindel\"of number is equal to
the cardinality of the Boolean algebra.
\end{pyt}

\begin{pyt}
	Are the results above (e.g. Theorem~\ref{thm-753}, Corollary~\ref{cor-hfdjskhjkf}, Theorem~\ref{thm-456789}) valid for singular cardinals?
\end{pyt}


\printindex

\end{document}